\documentclass[final,onefignum,onetabnum]{siamonline250211}

\usepackage{amssymb,amsfonts,mathtools}
\usepackage{algpseudocode}
\usepackage[numbers,sort&compress]{natbib}
\usepackage{enumitem}
\usepackage{needspace}

\makeatletter
\def\Url@OTnonTT{\do\<{\mathchar"313C }\do\>{\mathchar"313E }\do
  \_{\_\penalty\@m}\do\|{\mid}\do\{{\lbrace}\do\}{\mathbin{\rbrace}}\do
  \\{\mathbin{\backslash}}\UrlTildeSpecial}
\makeatother

\algrenewcommand\algorithmicrequire{\textbf{Input}}
\algrenewcommand\algorithmicensure{\textbf{Output}}

\newsiamremark{remark}{Remark}
\AddToHook{env/remark/begin}{\crefalias{theorem}{remark}}
\crefname{remark}{Remark}{Remarks}
\Crefname{remark}{Remark}{Remarks}

\DeclareMathOperator{\spanop}{span}
\DeclareMathOperator{\spec}{spec}
\DeclareMathOperator{\Ran}{Ran}
\DeclareMathOperator{\Lip}{Lip}
\DeclareMathOperator{\Ric}{Ric}
\newcommand{\dd}{\,\mathrm d}
\newcommand{\CC}{\mathbb C}
\newcommand{\EE}{\mathbb E}
\newcommand{\cB}{\mathcal B}
\newcommand{\cG}{\mathcal G}
\newcommand{\cH}{\mathcal H}
\newcommand{\cI}{\mathcal I}
\newcommand{\cJ}{\mathcal J}
\newcommand{\cK}{\mathcal K}
\newcommand{\cL}{\mathcal L}
\newcommand{\cQ}{\mathcal Q}
\newcommand{\cR}{\mathcal R}

\newcommand{\Fn}{\mathfrak F}
\DeclareMathOperator{\dist}{dist}
\DeclareMathOperator{\dom}{dom}
\DeclareMathOperator{\res}{res}
\newcommand{\Ba}{\mathbb B_a}
\newcommand{\nop}{n_{\rm op}}
\newcommand{\ip}[2]{\left\langle #1,#2\right\rangle}

\title{Learning Kolmogorov Backward Operators for Spectral Analysis and Forecasting: A Data-Driven Variational Galerkin Framework}
\author{Yuanchao Xu\thanks{Department of Mathematics, Graduate School of Science, Kyoto University, Kyoto 606-8502, Japan
  (\email{xu.yuanchao.3a@kyoto-u.ac.jp}, \email{ishikawa.isao.5s@kyoto-u.ac.jp}).}
\and Claire Valva\thanks{Department of Computing and Mathematical Sciences, California Institute of Technology, Pasadena, CA 91125, USA
  (\email{clairev@caltech.edu}).}
\and Ben Gao\thanks{Laboratoire Hubert Curien, UMR 5516, CNRS, IOGS, INRIA, Universit\'e Jean Monnet, 18 Rue du Professeur Beno{\^\i}t Lauras, 42000 Saint-\'Etienne, France
  (\email{ben.gao@univ-st-etienne.fr}).
  \funding{Y.X. and I.I. acknowledge support from JST CREST Grant No.~JPMJCR24Q1 and No.~JPMJCR24Q6, including the AIP Challenge Program. C.V. acknowledges support from the DoD Vannevar Bush Faculty Fellowship N00014-22-1-2790. B.G. acknowledges the Auvergne Rh\^one-Alpes region, Grenoble Alpes M\'etropole and BPIFrance, for project I-D\'emo R\'egion GreenAI public grant inside plan France 2030.}}
\and Isao Ishikawa\footnotemark[1]}
\headers{Backward Operators for Spectra and Forecasts}{Y. Xu, C. Valva, B. Gao, and I. Ishikawa}

\ifpdf
\hypersetup{
  pdftitle={Learning Kolmogorov Backward Operators for Spectral Analysis and Forecasting: A Data-Driven Variational Galerkin Framework},
  pdfauthor={Yuanchao Xu, Claire Valva, Ben Gao, and Isao Ishikawa}
}
\fi

\begin{document}
\maketitle

\begin{abstract}
For a time-homogeneous Markov process, the fixed-lag solution operator
\(
(T_\tau f)(x)=\mathbb E[f(X_{t+\tau})\mid X_t=x]
\)
of the Kolmogorov backward equation evolves observables linearly and
deterministically, even when the dynamics are stochastic. We approximate this operator using a two-stage method where we first approximate this operator directly from data, without knowledge of the governing equation (e.g., Fourier Neural Operator), and then compress the Galerkin projection onto a finite-dimensional observable space that is fixed in advance or selected variationally. The error analysis traces the learning error of the
estimator, the sampling error of the Galerkin inner products, and the
invariance and reconstruction properties of the observable space to the
computed spectra and forecasts. The sampling error enters only through a
multiplicative factor. Near an isolated spectral cluster, the invariance
defect of a subspace controls its distance from the target invariant
subspace. The forecast error is controlled by the one-step error of the
reduced model and the projection error of the observable; when the measure
is invariant and the reduced model preserves constants and is stable on
mean-zero observables, the bound is uniform in the horizon. For
deterministic dynamics, a learning rate for Fourier neural operators (FNOs)
gives one explicit bound for the learning error. Numerical experiments
illustrate spectral approximation and forecasting, including long-rollout
stability and a comparison with extended dynamic mode decomposition (EDMD)
when fewer transition pairs than basis functions are observed.
\end{abstract}

\begin{keywords}
Kolmogorov backward equation, data-driven dynamical systems, Koopman operator, operator learning, Galerkin approximation, invariant subspaces, spectral analysis, forecasting
\end{keywords}

\begin{MSCcodes}
37M10, 37M25, 47A15, 47A55, 47A58, 60J35, 68T07
\end{MSCcodes}

\section{Introduction}\label{sec:introduction}
When the governing equations are unknown, incomplete, or too costly to solve,
data-driven methods seek predictive and interpretable models directly from
time-resolved observations. Working with partial observations or stochastic systems complication this task as the measured variables need not satisfy a closed deterministic evolution law. 

A deterministic and linear description is available at the level
of observables. For a time-homogeneous Markov process, the fixed-lag solution
operator of the Kolmogorov backward equation is
\[
    (T_\tau f)(x)
    \coloneqq
    \EE\!\left[f(X_{t+\tau})\mid X_t=x\right].
\]
It remains linear and deterministic when the trajectories are random, and for
deterministic dynamics it reduces to the Koopman operator
\cite{koopman1931hamiltonian,koopman1932dynamical}. We refer to \(T_\tau\) as
the backward solution operator, or simply the solution operator. It is the
element of the transition semigroup at the fixed lag \(\tau\), not the
generator of the backward equation. Time-lagged data thus provide
statistical information about a deterministic evolution of observables.

Most data-driven approximations of this evolution are formed after an
observable space has been chosen. Extended dynamic mode decomposition (EDMD)
estimates the Galerkin compression of the operator on a fixed dictionary
\cite{williams2015data,korda2018convergence}. The dictionary may also be
learned \cite{li2017extended,takeishi2017learning,lusch2018deep,otto2019linearly},
for instance by minimizing single-transition spectral residuals
\cite{xu2025reskoopnet}, or it may be replaced by a reproducing-kernel space
\cite{klus2020eigendecompositions,kostic2022learning}, for which sharp
spectral error rates are available \cite{kostic2023sharp}. Variational
principles guide the choice of the approximation
\cite{noe2013variational,nuske2014variational,wu2020variational,mardt2018vampnets},
residual and invariance criteria assess its accuracy
\cite{colbrook2024rigorous,haseli2023invariance,conradie2026trustworthy},
and the resulting a posteriori error bounds have been minimized to select
dictionary and kernel parameters \cite{conradie2026trustworthy}. Several of
these ideas have been carried over to stochastic dynamics. Residual DMD has
been extended to account for the conditional variance of single transitions
\cite{colbrook2024beyond}. Invariant representations have been learned with
an objective whose maximizer is the leading invariant subspace
\cite{kostic2024learning}, and the Koopman semigroup has been approximated from
data through a stochastic Taylor expansion \cite{xu2025koopmansemigroup}.

In most of these methods, the approximation of the dynamics is tied to a
chosen set of coordinates and each candidate observable space requires its
own fit of the dynamics. Operator-learning methods instead approximate maps
between function spaces without first fixing a reduced representation
\cite{chen1995universal,lu2021learning,li2021fourier,kovachki2021universal,kovachki2023neural,stepaniants2023learning,xiong2024koopman,calvello2026operator}.
For overviews of the operator-theoretic formulation and its relation to
spectral decomposition, model reduction, and data-driven learning, see
\cite{mezic2005spectral,budivsic2012applied,mezic2021koopman}.

We combine these two viewpoints in a two-stage method, which we call the
backward-solution-operator (BSO) method. In the first stage, the solution
operator is approximated as a map on observables, before any reduced
coordinates are fixed. In the second stage, the estimator is projected onto
an observable space that is either chosen in advance or learned by
balancing approximate invariance against the representation of the
observables of interest. Approximating the operator before fixing the
coordinates has three consequences. First, the same estimator can be
evaluated on any candidate observable space, so the space can be searched
without refitting the dynamics. Second, for stochastic dynamics, an
estimator of \(T_\tau\) approximates the conditional mean \(T_\tau f\) of
each observable \(f\). The invariance of a candidate space is therefore
assessed without the conditional-variance contribution that enters
single-transition residuals. Third, once the estimator is fixed, its
\(m\)-dimensional Galerkin compression can be formed even when the number
\(\nop\) of observed transitions satisfies \(\nop<m\), whereas the
corresponding EDMD Gram matrix is then necessarily singular.

The main theoretical contribution of this work is an error analysis of the
entire two-stage method, from the approximation of \(T_\tau\) to the
computed spectra and forecasts, in which the observable space may itself be
selected from the data. The analysis separates three sources of error: the
learning error of the estimator on the observable space, the sampling error
of the empirical inner products used to assemble the Galerkin matrix, and
the invariance and reconstruction properties of the observable space
itself. The sampling error enters only through a factor that tends to one.
The main results are the following.
\begin{itemize}
\item The space selected by the variational objective is nearly as good as
the best candidate when measured by the exact objective, up to the
learning error, the optimization tolerance, and the sampling factor
(\cref{thm:main}). The computed eigenvalues and spectral subspaces
converge whenever the observable spaces approach a finite-dimensional
invariant subspace, the learning error on these spaces tends to zero, and
the sampling error stays bounded away from one
(\cref{thm:galerkin-spectral}). If the corresponding spectral cluster is
isolated, the invariance defect of a subspace is locally equivalent to its
distance from the invariant subspace
(\cref{thm:quantitative-identification}). The variational objective
therefore bounds this distance, provided that the reconstruction
observables resolve the subspace and lie sufficiently close to it and that
the value of the objective is small relative to its reconstruction weight.
\item The forecast error is bounded by the one-step error of the space in
use and by the part of the observable that this space does not represent
(\cref{thm:forecast}); no invariant subspace is required. For an
invariant measure and reduced models that preserve constants and are
stable on mean-zero observables, a bound that is uniform in the horizon is
given in \cref{app:forecast-supplements}.
\item For deterministic dynamics, a learning rate for Fourier neural
operators (FNOs) provides one explicit bound for the first stage
(\cref{cor:fno-rate}); it can be inserted into the spectral and forecast
estimates above.
\end{itemize}
The analysis does not depend on how the estimator is constructed, and
stochastic dynamics enter only through its learning error. For stochastic
dynamics, the estimator has to approximate conditional means, for instance
by regression or through a learned transition kernel
(\cref{rem:neural-operator}); no explicit learning rate is given for that
case.

The rest of the paper is organized as follows. \Cref{sec:preliminaries}
gives the operator and spectral setting. \Cref{sec:methodology} presents
the method, its computational realization, and its relation to EDMD.
\Cref{sec:variational} establishes the error analysis, and
\cref{sec:numerical-experiments} reports the numerical experiments.
\Cref{sec:conclusion} concludes. Proofs and supplementary material are
collected in the appendices.

\section{Preliminaries}
\label{sec:preliminaries}

This section fixes the notation and recalls the operator and spectral
setting used throughout the paper.

\paragraph{Notation}
Unless stated otherwise, observables belong to
\(
    \cH\coloneqq L^2(\Omega,\rho;\mathbb R),
\)
where \(\Omega\) is a standard Borel space and \(\rho\) is a Borel
probability measure on \(\Omega\). The symbols \(\ip{\cdot}{\cdot}\) and
\(\|\cdot\|\) denote the \(L^2(\rho)\) inner product and norm. For normed
spaces \(\mathcal X\) and \(\mathcal Y\), \(\cB(\mathcal X,\mathcal Y)\)
denotes the bounded linear operators from \(\mathcal X\) to \(\mathcal Y\)
with the operator norm \(\|\cdot\|\), and \(\cB(\mathcal X)\coloneqq
\cB(\mathcal X,\mathcal X)\). For a closed subspace \(V\) of \(\cH\),
\(P_V\) is the orthogonal projection onto \(V\).

The complexification of a real linear space \(W\) of functions or of
equivalence classes is \(W+iW\), and that of \(\cH\) carries the inner
product that is conjugate-linear in its first argument. Whenever complex
scalars appear, a real space and its complexification are denoted by the
same symbol, and so are a real operator and its complex-linear extension,
which has the same norm; spectra always refer to the extension.

For vectors and matrices, \({}^\top\) denotes the transpose, \({}^*\) the
conjugate transpose, \({}^\dagger\) the Moore--Penrose pseudoinverse, and
\(\|\cdot\|_2\) the Euclidean norm of a vector and the spectral norm of a
matrix; \(\lambda_{\max}\) and \(\lambda_{\min}\) denote the largest and
smallest eigenvalues of a Hermitian matrix. For operators between Hilbert
spaces, \({}^*\) denotes the adjoint. A basis of a finite-dimensional
subspace that is orthonormal in \(L^2(\rho)\) is called \(\rho\)-orthonormal
and is denoted by \(\chi\), with a superscript naming the subspace when it
is not \(V\). A hat denotes a quantity computed from data, and a subscript
\(\star\) refers to the target invariant subspace \(E_\star\) introduced in
\cref{sec:spec_pert_theory}.

\subsection{Kolmogorov backward equation}\label{sec:kbe}
Let \((X_t)_{t\ge0}\) be a time-homogeneous Markov process on \(\Omega\)
with transition semigroup \((T_t)_{t\ge0}\),
\[
    (T_t f)(x)\coloneqq\EE\bigl[f(X_t)\mid X_0=x\bigr].
\]
Time homogeneity gives
\((T_\tau f)(x)=\EE[f(X_{s+\tau})\mid X_s=x]\) for every \(s\ge0\). The
operator \(T_\tau\) is an element of the Markov semigroup
\cite{pavliotis2014stochastic} and is also called the stochastic Koopman
operator
\cite{crnjariczic2020koopman,oprea2025distributional,xu2025koopmansemigroup}.
If \(\rho\) is invariant, then \(T_\tau\) is a contraction on \(L^2(\rho)\)
by Jensen's inequality and \(T_\tau1=1\); otherwise we assume that
\(T_\tau\) is well defined and bounded on \(L^2(\rho)\). When the semigroup
is strongly continuous on \(L^2(\rho)\), it has a generator \(\cL\) with
domain \(\dom(\cL)\), and \(T_t=e^{t\cL}\) in the semigroup sense. For
\(f\in\dom(\cL)\), the function \(u(t,\cdot)=T_tf\) then solves the
Kolmogorov backward equation \cite{pavliotis2014stochastic,engel2000one}
\[
    \partial_t u=\cL u,
    \qquad
    u(0,\cdot)=f.
\]
For an It\^o diffusion
\(\dd X_t=b(X_t)\dd t+\sigma(X_t)\dd W_t\) with drift \(b\) and diffusion
coefficient \(\sigma\), driven by a Brownian motion \(W_t\), It\^o's formula
\cite{pavliotis2014stochastic} gives, for smooth \(f\),
\[
    \cL f
    =b\cdot\nabla f
    +\frac12\operatorname{tr}(\sigma\sigma^\top\nabla^2 f),
\]
while for \(\dot x=F(x)\), \(\cL f=F\cdot\nabla f\). For deterministic
dynamics with flow map \(\Phi^t\), \(X_t=\Phi^t(X_0)\) and
\(T_t f=f\circ\Phi^t\). Throughout the analysis, the lag \(\tau>0\) is
fixed, and we write
\[
    T\coloneqq T_\tau,
    \qquad
    M_T\coloneqq\|T\|_{\cB(\cH)}.
\]

\subsection{Invariant subspaces and spectral projections}\label{sec:spec_pert_theory}
The targets of the spectral computation are finite-dimensional
\(T\)-invariant subspaces, and computed spaces are compared with them
through the distance
\(
    \dist(V,W)\coloneqq\|P_V-P_W\|
\)
between closed subspaces \(V\) and \(W\) of \(\cH\).
If \(V\) and \(W\) have the same finite dimension, then
\(\dist(V,W)=\|(I-P_V)P_W\|=\|(I-P_W)P_V\|\) is the sine of the largest
principal angle between \(V\) and \(W\)
\cite{bjorck1973numerical,stewart1990matrix},
\cite[Chap.~I, \S6.8]{kato1966perturbation}.

A subspace \(E\subset\cH\) is \(T\)-invariant if \(TE\subset E\). Throughout,
\(E_\star\) denotes an \(m\)-dimensional \(T\)-invariant subspace and
\(P_\star\coloneqq P_{E_\star}\); the eigenvalues of \(T|_{E_\star}\) are
eigenvalues of \(T\). The main examples are spectral subspaces. Suppose that
a finite, conjugation-invariant set \(\Lambda_\star\subset\spec(T)\), an
isolated cluster, is separated from \(\spec(T)\setminus\Lambda_\star\) by a
positively oriented contour \(\Gamma\) in the resolvent set, and that the
Riesz projection
\[
    \Pi
    \coloneqq
    \frac{1}{2\pi i}\oint_\Gamma(zI-T)^{-1}\,\dd z
\]
has finite rank \(m\). Since \(T\) is real and \(\Lambda_\star\) is
conjugation invariant, so is \(\Ran\Pi\), and its real part
\(
    E_\star=\{u\in\Ran\Pi:\overline u=u\}
\)
is an \(m\)-dimensional \(T\)-invariant subspace with
\(\Ran\Pi=E_\star+iE_\star\) and \(\spec(T|_{E_\star})=\Lambda_\star\)
\cite[Chap.~III, \S6.4]{kato1966perturbation}. The complexification
\(\Ran\Pi\) is the spectral subspace of \(\Lambda_\star\), and we also call
\(E_\star\) the spectral subspace of \(\Lambda_\star\). In reversible
diffusion settings, \(\cL\) is self-adjoint, and \(T_\tau=e^{\tau\cL}\) and
\(\cL\) have the same eigenspaces by the spectral theorem; in the
non-self-adjoint case, spectral subspaces are the stable spectral objects
\cite{stewart1990matrix,kato1966perturbation}. The eigenvalues \(\mu\) of
\(T_\tau\) are discrete-time multipliers. When continuous-time rates are
reported, \(\lambda=\tau^{-1}\log\mu\), \(\mu\ne0\), is evaluated on a fixed
branch of the logarithm.

\section{Approximation and Galerkin projection of the solution operator}\label{sec:methodology}

The BSO method has two stages. First, an estimator \(\widehat T\) of the
solution operator \(T\) is constructed from data. Second, the estimator is
compressed to a Galerkin matrix on an observable space \(V\) that is fixed in
advance or selected variationally. In matrix form, the second stage is a
least-squares problem. Let \(\psi_1,\ldots,\psi_m\) be a basis of \(V\), let
\(x_1,\ldots,x_n\) be sample points in \(\Omega\), and define the data
matrices
\[
    (\mathbf\Psi)_{kj}\coloneqq\psi_j(x_k),
    \qquad
    (\mathbf\Psi_{\widehat T})_{kj}\coloneqq(\widehat T\psi_j)(x_k),
    \qquad
    \mathbf\Psi,\mathbf\Psi_{\widehat T}\in\mathbb R^{n\times m},
\]
where boldface distinguishes data matrices from bases. If \(\mathbf\Psi\)
has full column rank, the BSO matrix is
\begin{equation}\label{eq:bso-least-squares}
    \widehat K
    \coloneqq
    \mathbf\Psi^\dagger\mathbf\Psi_{\widehat T}
    =
    \bigl(\mathbf\Psi^\top\mathbf\Psi\bigr)^{-1}
    \mathbf\Psi^\top\mathbf\Psi_{\widehat T}.
\end{equation}
This is the EDMD matrix with the observed time-advanced values replaced by the
values \((\widehat T\psi_j)(x_k)\) of the estimator, so that the sample
points need not have observed next-step values. The eigenpairs of \(\widehat K\)
are used for spectral analysis and its powers for forecasting. For the
analysis, the basis is orthonormalized in the empirical inner product of the
sample points (\cref{sec:computational-realization}), which turns
\eqref{eq:bso-least-squares} into a matrix of inner products. For a linear
estimator, this changes \(\widehat K\) only by a similarity transformation.
The two stages are described in
\cref{sec:data-to-backward,sec:galerkin-method}, the computational
realization in \cref{sec:computational-realization}, and the relation to
EDMD in \cref{subsec:smalldata}.

\subsection{Learning the solution operator}
\label{sec:data-to-backward}

Let \(X\) be a state observed at a time \(t\ge0\), whose law is absolutely
continuous with respect to \(\rho\), and let \(X^+\coloneqq X_{t+\tau}\) be
the state one lag later. By the definition of \(T\) in \cref{sec:kbe},
every observable \(f\) satisfies
\[
    (Tf)(X)
    =
    \EE\!\left[f(X^+)\mid X\right]
    \qquad\text{almost surely},
\]
so that the time-lagged values of every observable are governed by \(T\).
The data \(\mathcal D_{\rm op}\) of the method are \(\nop\) time-lagged
pairs of observables, which arise in two forms. In the first form, they are
the lagged values \(f(X_k)\) and \(f(X_k^+)\) of observables \(f\) at
observation times \(t_1,\ldots,t_{\nop}\), where \(X_k\coloneqq X_{t_k}\)
and \(X_k^+\coloneqq X_{t_k+\tau}\); for the observed variables of
\cref{rem:neural-operator}, these lagged values are the lagged observations
\((y_{t_k},y_{t_k+\tau})\). In the second form, used in the spectral
experiments of \cref{sec:numerical-experiments}, they are observables
\(f_k\) together with their images \(Tf_k\) under the backward evolution
over one lag. From these data an estimator \(\widehat T\) of \(T\) is
constructed without knowledge of the governing equation. The analysis is
independent of the operator-learning method and depends only on the
accuracy of \(\widehat T\) on the observable spaces under consideration.
With partial observations, the same procedure approximates a compression
of \(T\) to the functions of the observed variables
(\cref{app:partial-observation}).

One technical point concerns the functions on which \(\widehat T\) acts.
Only the values of \(\widehat T\) on finitely many functions --- evaluated at
the sample points of the Galerkin step --- enter the computation. Point
evaluations are not defined on the \(\rho\)-equivalence classes that form
\(\cH\). We therefore fix a linear space \(\Fn\) of everywhere-defined
measurable functions \(\Omega\to\mathbb R\) that are square integrable with
respect to \(\rho\), and we regard \(\widehat T\) as a linear map
\(\widehat T\colon\Fn\to\Fn\); estimators that are not linear are covered
by \cref{rem:nonlinear-estimator}. The space \(\Fn\) is not canonical. Any
such space may be used that contains the constant function \(1\), the
candidate dictionaries, and the reconstruction observables of
\cref{sec:galerkin-method}, and that is mapped into itself by
\(\widehat T\). The conventions for passing between functions in \(\Fn\)
and their \(\rho\)-equivalence classes are collected in
\cref{app:conventions}.

\begin{remark}\label{rem:neural-operator}
In the experiments of \cref{sec:numerical-experiments}, \(\widehat T\) is a
neural operator, obtained in one of two ways. In the spectral examples, a
neural operator is trained on pairs \((f,Tf)\) of observables and
approximates \(T\) directly. Such an estimator is in general nonlinear in
\(f\) and is covered by \cref{rem:nonlinear-estimator}. In the forecasting
examples, the data are time series of observed variables \(y_t=h(X_t)\)
with values in a space \(Y\), such as sensor readings, fields, or delay
vectors, where each component of \(h\) is an observable. A neural operator
\(\widehat\Phi\colon Y\to Y\) is then trained on the lagged observations
\((y_t,y_{t+\tau})\). If the observed variables evolve autonomously,
\(y_{t+\tau}=\Phi(y_t)\), as for deterministic dynamics observed through an
injective map or through delay coordinates, then \(T\) acts on observables
of \(y\) by composition with \(\Phi\); if the full state is observed,
\(\Phi=\Phi^\tau\). We then take \(\Omega=Y\), with \(\rho\) a probability
measure on \(Y\) and \(Tf=f\circ\Phi\), and use the composition estimator
\(\widehat Tf\coloneqq f\circ\widehat\Phi\), which is linear in \(f\). If
the observed variables do not evolve autonomously, as for stochastic
dynamics or partial observations, and \(\widehat\Phi\) is trained by least
squares, then \(f\circ\widehat\Phi\) approximates the conditional mean of
\(f(y_{t+\tau})\) given \(y_t\) for affine \(f\), but in general not for
nonlinear \(f\). In that case the estimator is obtained by regressing
\(f(y_{t+\tau})\) on \(y_t\) or from an estimate of the transition kernel
(\cref{app:partial-observation}). Other operator-learning and regression
methods can be used in the same way.
\end{remark}

\subsection{Candidate spaces and the variational objective}
\label{sec:galerkin-method}

Throughout, \(m\ge1\) denotes the dimension of the observable spaces; the
invariant subspaces \(E_\star\) of \cref{sec:spec_pert_theory} to which they
are compared have the same dimension. Fix a parameter set \(\Theta\), and
let each
\(\theta\in\Theta\) define a raw dictionary
\(\phi_\theta=(\phi_{\theta,1},\ldots,\phi_{\theta,m})\) of functions in
\(\Fn\) whose \(\rho\)-equivalence classes are linearly independent, with
span \(V_\theta\). Two examples are used below. In a neural dictionary on a
subset \(\Omega\) of a Euclidean space, \(\phi_\theta\colon\Omega\to\mathbb
R^m\) is a network with one hidden \(\tanh\) layer and \(m\) outputs, and
\(\theta\) collects its weights and biases, which range over a bounded set.
In a polynomial dictionary, the \(\phi_{\theta,j}\) are polynomials of
bounded degree, and \(\theta\) collects their coefficients. A dictionary
that is fixed in advance corresponds to a singleton \(\Theta\), and the
constant function may be kept as a fixed element of every dictionary.
Families \(\Theta_p\), \(p\in\mathbb N\), of increasing richness are
considered in \cref{subsec:dictionary-observable-accuracy} and
\cref{rem:level}; the level \(p\) is, for instance, the network width or
the polynomial degree. Quantities such as \(\Delta_{\Theta_p}\) and
\(\eta_{\Theta_p}\) are those defined below for a parameter set \(\Theta\),
taken with \(\Theta=\Theta_p\).

The observables to be represented, called reconstruction observables, form
a finite set \(\cG\coloneqq\{g_1,\ldots,g_q\}\subset\Fn\); the observables
to be forecast in \cref{subsec:forecast-estimates} are taken from their
span. For an \(m\)-dimensional subspace
\(V\subset\cH\), define the invariance defect and the reconstruction error
\[
    \cI(V)\coloneqq\|(I-P_V)T|_V\|_{\cB(V,\cH)},
    \qquad
    \cR(V)\coloneqq\max_{1\le j\le q}\|(I-P_V)g_j\|,
\]
and, for a weight \(\alpha>0\), the objective
\[
    \cJ(V)\coloneqq\cI(V)^2+\alpha\,\cR(V)^2.
\]
The invariance defect \(\cI(V)\) vanishes if and only if \(V\) is
\(T\)-invariant; it measures the part of \(TV\) that is not represented in
\(V\). The reconstruction error \(\cR(V)\) measures the part of the
reconstruction observables that is not represented in \(V\), and the
weight \(\alpha\) balances the two terms.

For an \(m\)-dimensional subspace \(V\subset\Fn\) on which \(\|\cdot\|\) is
a norm, such as \(V_\theta\), the estimator enters the analysis through its
learning error on \(V\),
\begin{equation}\label{eq:learning-error}
    \Delta(V)\coloneqq\bigl\|\widehat T|_V-T|_V\bigr\|_{\cB(V,\cH)},
\end{equation}
where \(V\) is identified with its image in \(\cH\)
(\cref{app:conventions}); it is controlled by the accuracy of \(\widehat T\)
on the functions of any basis of \(V\) (\cref{lem:basis-linear-surrogate}).

\begin{remark}[Nonlinear estimators]\label{rem:nonlinear-estimator}
An estimator that is not linear in \(f\), such as a neural operator trained
on pairs \((f,Tf)\), enters the computation only through its values on
the computational basis
\(\widehat\Psi=(\widehat\psi_1,\ldots,\widehat\psi_m)\) of \(V\)
(\cref{sec:computational-realization}). For such an estimator,
\(\widehat T|_V\) denotes the linear map \(V\to\Fn\) that sends
\(\widehat\psi_j\) to \(\widehat T\widehat\psi_j\), and \(\widehat TV\)
denotes its range; with this convention, all results below remain valid, and
\cref{lem:basis-linear-surrogate} bounds the learning error by the errors of
\(\widehat T\) on the basis functions. The map \(\widehat T|_V\), and with
it the computed Galerkin operator of \cref{sec:computational-realization},
then depends on the computational basis, hence on the orthonormalization
rule and on the sample. If \(\widehat T|_V\) and \(\widehat T|_V'\) are the maps defined by
two bases of \(V\), with learning errors \(\Delta(V)\) and \(\Delta'(V)\),
then
\begin{equation}\label{eq:basis-dependence}
    \bigl\|\widehat T|_V-\widehat T|_V'\bigr\|_{\cB(V,\cH)}
    \le
    \bigl\|\widehat T|_V-T|_V\bigr\|+\bigl\|T|_V-\widehat T|_V'\bigr\|
    =\Delta(V)+\Delta'(V),
\end{equation}
and each of the two computed Galerkin operators satisfies the bound
\eqref{eq:compression-error} of \cref{prop:one-step} with respect to the
same exact compression \(P_VT|_V\), with its own learning error and its
own sampling error; the sampling errors differ because the range
\(\widehat TV\) depends on the basis. The bounds of \cref{sec:variational}
therefore hold for either choice of the basis, and by
\eqref{eq:basis-dependence} the two choices give nearby operators whenever
both learning errors are small.
\end{remark}

\begin{remark}\label{rem:stochastic-residual-bias}
Let \(X\sim\rho\) and let \(X^+\) denote the state one lag \(\tau\) after
\(X\). For \(\varphi\in\cH\) with \(\EE\,|\varphi(X^+)|^2<\infty\) and
\(\mu\in\CC\), a single-transition residual satisfies
\[
    \EE\,|\varphi(X^+)-\mu\varphi(X)|^2
    =\|T\varphi-\mu\varphi\|^2
     +\EE\,|\varphi(X^+)-(T\varphi)(X)|^2 ,
\]
because \(\varphi(X^+)-(T\varphi)(X)\) has conditional mean zero given
\(X\) and is therefore uncorrelated with \((T\varphi-\mu\varphi)(X)\).
Thus residuals formed from individual stochastic transitions contain an
irreducible conditional-variance term, the last term above, whereas
conditional-mean estimation removes this contribution. The same phenomenon
is known as the double-sampling issue in Bellman-residual minimization in
reinforcement learning \cite{baird1995residual,antos2008learning}.
\end{remark}

\subsection{Computational realization}
\label{sec:computational-realization}

Let \(x_1,\ldots,x_n\) be the sample points of
\eqref{eq:bso-least-squares}, either drawn independently from \(\rho\) or
taken from a stationary ergodic trajectory with invariant law \(\rho\). The
number \(n\) of sample points used for the Galerkin inner products is
distinct from the number \(\nop\) of pairs used to construct
\(\widehat T\), and the sample points need not be states at which data
were observed (\cref{subsec:smalldata}). For convergence results for DMD
based on ergodic trajectory data and Hankel-type observables, see
\cite{arbabi2017ergodic}. The empirical inner product and seminorm are
\[
    \ip{u}{v}_n
    \coloneqq
    \sum_{k=1}^n
    w_k\,\overline{u(x_k)}\,v(x_k),
    \qquad
    \|u\|_n\coloneqq\ip{u}{u}_n^{1/2},
    \qquad
    u,v\in\Fn,
\]
with the weights \(w_k=1/n\). More generally, the points \(x_k\) may be the
nodes of a quadrature rule for \(\rho\) with positive weights \(w_k\) summing
to one. The deterministic statements below hold verbatim for such weights,
with \eqref{eq:bso-least-squares} replaced by the corresponding weighted
least-squares matrix; the sampling error \(\eta\) of \cref{subsec:sampling}
is then the quadrature error of the rule.

Let \(V=V_\theta\) for some \(\theta\in\Theta\). Assume that the empirical
Gram matrix \([\langle\phi_{\theta,i},\phi_{\theta,j}\rangle_n]_{i,j=1}^m\)
of \(\phi_\theta\) is nonsingular, which requires \(n\ge m\). Let
\(\widehat\Psi=(\widehat\psi_1,\ldots,\widehat\psi_m)\) be the basis of
\(V\) obtained by orthonormalizing \(\phi_\theta\) in
\(\ip{\cdot}{\cdot}_n\) by a fixed rule, for example Gram--Schmidt or
symmetric orthonormalization; we call \(\widehat\Psi\) the computational
basis of \(V\). For a subspace \(V\) on which \(\ip{\cdot}{\cdot}_n\) is an
inner product, \(\widehat P_V\) denotes the
\(\ip{\cdot}{\cdot}_n\)-orthogonal projection onto \(V\), which is defined
on \(\Fn\). From now on, \(\widehat K\) denotes the least-squares matrix
\eqref{eq:bso-least-squares} for the computational basis. Since
\(\mathbf\Psi^\top\mathbf\Psi=nI\) for this basis, this computed Galerkin
matrix and the image Gram matrix are
\[
    \widehat K
    =
    \bigl[
        \ip{\widehat\psi_i}
            {\widehat T\widehat\psi_j}_n
    \bigr]_{i,j=1}^m,
    \qquad
    \widehat L
    \coloneqq
    \bigl[
        \ip{\widehat T\widehat\psi_i}
            {\widehat T\widehat\psi_j}_n
    \bigr]_{i,j=1}^m.
\]
The matrix \(\widehat L\) is the counterpart of the additional matrix that
residual DMD adds to those of EDMD \cite{colbrook2024rigorous}, with the
values of the estimator in place of the observed values. The matrix
\(\widehat K\) is the matrix, in the basis \(\widehat\Psi\), of the computed
Galerkin operator
\begin{equation}\label{eq:computed-galerkin-operator}
    \widehat\cK
    \coloneqq
    \widehat P_V\widehat T|_V
    \in\cB(V),
    \qquad
    \widehat\cK\widehat\psi_j
    =\sum_{i=1}^m\widehat K_{ij}\,\widehat\psi_i.
\end{equation}
The eigenvalues of \(\widehat K\) are those of \(\widehat\cK\), and an
eigenvector \(c\) of \(\widehat K\) corresponds to the eigenfunction
\(\sum_j c_j\widehat\psi_j\) of \(\widehat\cK\). The computed objective of
\(V=V_\theta\) is
\begin{equation}\label{eq:computed-objective}
    \widehat\cJ(\theta)
    \coloneqq
    \lambda_{\max}\!\left(
        \widehat L
        -
        \widehat K^\top
        \widehat K
    \right)
    +
    \alpha
    \max_{1\le j\le q}
    \|(I-\widehat P_V)g_j\|_n^2 .
\end{equation}
When \(\ip{\cdot}{\cdot}_n\) is an inner product on \(V+\widehat TV\), its
first term is the squared invariance defect of \(V\) with respect to
\(\widehat T\), measured in \(\ip{\cdot}{\cdot}_n\)
(\cref{lem:basis-linear-surrogate}). Thus \(\widehat\cJ(\theta)\) is the
empirical counterpart of \(\cJ(V_\theta)\), with \(T\) replaced by
\(\widehat T\).

\begin{algorithm}[tbp]
\caption{Fixed-lag solution-operator approximation and Galerkin projection}
\label{alg:variational-spectral-learning}
\begin{algorithmic}[1]

\Require Data \(\mathcal D_{\rm op}\) for constructing
\(\widehat T\) (\cref{sec:data-to-backward}); sample points
\(x_1,\ldots,x_n\), with their weights if these are not equal; either a
fixed dictionary or
candidate dictionaries \(\{\phi_\theta\}_{\theta\in\Theta}\); the
reconstruction observables \(\cG\), and for space selection \(\alpha>0\) and
a tolerance \(\zeta\ge0\); optional observable \(f\in\spanop\cG\) and
horizon \(N\).

\Ensure Space \(\widehat V\), basis \(\widehat\Psi\), Galerkin matrix
\(\widehat K\), the requested eigenvalues and associated eigenvectors or
Schur vectors, and optional forecast.

\State Construct \(\widehat T\) from \(\mathcal D_{\rm op}\).

\If{the dictionary is fixed in advance}
    \State Set \(\widehat V\) equal to its span and \(\widehat\Psi\) equal to
    its basis orthonormalized in \(\ip{\cdot}{\cdot}_n\).
\Else
    \State Compute \(\widehat\theta\in\Theta\) with
    \(
        \widehat\cJ(\widehat\theta)
        \le
        \inf_{\theta\in\Theta}\widehat\cJ(\theta)+\zeta,
    \)
    and set
    \(
        \widehat V\gets V_{\widehat\theta}
    \)
    and \(\widehat\Psi\) equal to its computational basis.
\EndIf

\State Evaluate
\(\widehat T\widehat\psi_j\), \(j=1,\ldots,m\), and set
\(
    \widehat K
    \gets
    \bigl[
        \ip{\widehat\psi_i}
            {\widehat T\widehat\psi_j}_n
    \bigr]_{i,j=1}^m,
\)
the matrix of the computed Galerkin operator
\eqref{eq:computed-galerkin-operator}.

\State Compute the required eigenvalues, eigenvectors, or Schur
vectors of \(\widehat K\).

\If{\(f\) and \(N\) are supplied}
    \State Compute
    \(\widehat c\coloneqq(\ip{\widehat\psi_j}{f}_n)_{j=1}^m\), the
    coefficients of \(\widehat P_{\widehat V}f\), and return the forecast
    \(\sum_{j}(\widehat K^N\widehat c)_j\widehat\psi_j\).
\EndIf

\end{algorithmic}
\end{algorithm}

\Cref{alg:variational-spectral-learning} summarizes the procedure. Training
the estimator \(\widehat T\) is the main computational cost, while the
Galerkin step is a small least-squares problem that can be repeated over
candidate spaces at negligible cost. Operator
estimation, space selection, and matrix assembly may use separate data
sets; the effect of reusing data on the error bounds is described in
\cref{subsec:sampling} and \cref{app:empirical-concentration}.

\subsection{Relation to EDMD}
\label{subsec:smalldata}
Consider a fixed space \(V\) and data given by lagged values at the states
\(X_k\) and \(X_k^+\) of \cref{sec:data-to-backward}; we call
\((X_k,X_k^+)\) transition pairs in what follows. Take the states \(X_k\)
as the sample points \(x_k\) of \cref{sec:computational-realization}.
Replacing \(\mathbf\Psi_{\widehat T}\) in \eqref{eq:bso-least-squares} by
the matrix of observed values \(\psi_j(X_k^+)\) then gives the
standard EDMD matrix \cite{williams2015data}, which converges to the
Galerkin matrix of \(T\) as the number of transition pairs increases, by
the law of large numbers \cite{korda2018convergence}. For a fixed
\(\widehat T\), BSO instead targets the Galerkin matrix of \(\widehat T\),
whose discrepancy from that of \(T\) is controlled by \(\Delta(V)\). EDMD
with dictionary learning also optimizes the observable space
\cite{li2017extended}. The distinction here is that \(\widehat T\) is
constructed before the space search and can be reused across candidate
spaces. In stochastic systems, its conditional-mean targets also avoid the
conditional-variance term of \cref{rem:stochastic-residual-bias}
\cite{colbrook2024beyond}.

\paragraph{Rank constraints with few transition pairs}
Let \(\mathbf\Psi_X,\mathbf\Psi_{X^+}\in\mathbb R^{\nop\times m}\) be the
data matrices with entries \(\psi_j(X_k)\) and \(\psi_j(X_k^+)\) for a basis
\(\psi_1,\ldots,\psi_m\) of \(V\). The EDMD matrix \(K^{\rm E}\) solves
\(GK^{\rm E}=A^{\rm E}\) with the EDMD Gram and cross matrices
\[
    G\coloneqq\frac{1}{\nop}\mathbf\Psi_X^\top\mathbf\Psi_X,
    \qquad
    A^{\rm E}\coloneqq\frac{1}{\nop}\mathbf\Psi_X^\top\mathbf\Psi_{X^+}.
\]
If \(\nop<m\), then \(G\) is singular, and the minimum-norm solution
\(K^{\rm E}=G^\dagger A^{\rm E}\) satisfies
\(\operatorname{rank}(K^{\rm E})\le\operatorname{rank}(A^{\rm E})\le\nop\);
the same rank bound holds if \(G\) is estimated from additional states.
Hence \(\nop\ge m\) is necessary for an unregularized full-rank EDMD fit.
In BSO, the \(\nop\) transition pairs determine \(\widehat T\), and the
Galerkin matrix \(\widehat K\) is then computed as in
\cref{sec:computational-realization} from points \(x_k\), possibly
weighted, whose construction does not use the states \(X_k^+\).
These points may, for instance, be generated from observed states with the
learned map \(\widehat\Phi\) of a composition estimator, as in
\cref{subsec:small-data-experiment}. With \(n\ge m\) such points and a
nonsingular empirical Gram matrix, \(\widehat K\) is defined whether or not
\(\nop<m\), and it is not subject to this rank bound. This does not imply
that \(\nop\) transitions identify an arbitrary \(m\times m\) operator. The
additional directions are supplied by \(\widehat T\), and the error of
\(\widehat K\) is governed by the learning error and the sampling error
(\cref{subsec:sampling}). See \cite{colbrook2024another} for the regime of
fewer snapshots than dictionary functions within residual DMD.

\begin{remark}[Bias and variance]\label{rem:bias-variance}
The transition data also enter the two errors differently. Let
\(K_T\coloneqq[\ip{\chi_i}{T\chi_j}]_{i,j=1}^m\) be the Galerkin matrix of
\(T\) in a \(\rho\)-orthonormal basis \(\chi\) of \(V\). For independent
transition pairs with \(X_k\sim\rho\), the EDMD cross matrix \(A^{\rm E}\)
formed with \(\chi\) is an unbiased estimator of \(K_T\) by the tower
property, with variance of order \(\nop^{-1}\) under a moment condition.
Conditionally on \(\widehat T\), with sample points that are independent of
it, the corresponding BSO cross matrix
\([\ip{\chi_i}{\widehat T\chi_j}_n]_{i,j=1}^m\) has instead the bias
\([\langle\chi_i,(\widehat T-T)\chi_j\rangle]_{i,j=1}^m\), of norm at most
\(\Delta(V)\), and its only fluctuation is the error of the empirical inner
products (\cref{subsec:sampling}). The learning error thus plays for BSO the
role that the sampling variance plays for EDMD. A target that blends the
observed values, with weight \(1-\beta\), and the values of the
estimator, with weight \(\beta\in[0,1]\), as in \cref{subsec:sst-experiment},
interpolates between the two. With the states \(X_k\) as sample points and
transition pairs that are independent of \(\widehat T\), the bias has norm at
most \(\beta\Delta(V)\), and the fluctuations of the EDMD and BSO cross
matrices are scaled by \(1-\beta\) and \(\beta\).
\end{remark}

\section{Error analysis for spectral approximation and forecasting}
\label{sec:variational}

Throughout this section, \(V\subset\Fn\) is an \(m\)-dimensional subspace
with the computational basis
\(\widehat\Psi=(\widehat\psi_1,\ldots,\widehat\psi_m)\) of
\cref{sec:computational-realization}, which is orthonormal in
\(\ip{\cdot}{\cdot}_n\); \(\widehat\cK=\widehat P_V\widehat T|_V\) is the
computed Galerkin operator \eqref{eq:computed-galerkin-operator}, and
\(\widehat K\) is its matrix. The computational basis is in general not
\(\rho\)-orthonormal; \(\rho\)-orthonormal bases of \(V\) are denoted by
\(\chi\).

The analysis compares \(\widehat\cK\) with the solution operator \(T\) and
is organized by the origin of the errors. Three quantities enter the
bounds. The estimator enters only through its learning error \(\Delta(V)\) of
\eqref{eq:learning-error}, the error of the first stage on the unit ball of
\(V\); no assumption is made on its construction or training. The learning
error is bounded by the errors of \(\widehat T\) on the functions of a
basis of \(V\) (\cref{lem:basis-linear-surrogate}) and by the prediction
error on held-out data (\cref{app:fno-setting}), and it is bounded
explicitly for an FNO in \cref{subsec:empirical-fno}. The empirical inner
products enter through the sampling error \(\eta_V\) defined in
\cref{subsec:sampling}. The space enters through its invariance defect
\(\cI(V)\) and its reconstruction error \(\cR(V)\)
(\cref{sec:galerkin-method}).

The section is organized as follows. \Cref{subsec:sampling} defines the
sampling error and gives the basic estimate: one application of
\(\widehat\cK\) differs on \(V\) from one application of \(T\) by at most
\((\cI(V)+\Delta(V))/(1-\eta_V)\). This estimate holds for every space and
underlies the two main results, the spectral bound of
\cref{subsec:galerkin-spectral-convergence}, which requires in addition
that \(V\) be close to a \(T\)-invariant subspace \(E_\star\), and the
forecast bound of \cref{subsec:forecast-estimates}. For a space selected
by the search, \cref{subsec:dictionary-observable-accuracy} bounds the
objective \(\cJ\), and thereby \(\cI\) and \(\cR\), in terms of the
learning and sampling errors, the best candidate, and the tolerance of the
search. \Cref{thm:quantitative-identification} relates \(\cI(V)\) to the
distance from \(V\) to \(E_\star\), and \cref{subsec:empirical-fno}
inserts an explicit learning rate for FNOs. Proofs are given in
\cref{app:proofs}, which follows the auxiliary lemmas of
\cref{app:conventions,app:empirical-concentration}; supplementary results
are collected in \cref{app:supplements}.

\subsection{One-step error}
\label{subsec:sampling}

The Galerkin matrix \(\widehat K\) is assembled from the sample points
through the empirical inner product \(\ip{\cdot}{\cdot}_n\), which differs
from the inner product of \(L^2(\rho)\). The sampling error measures this
difference on the functions that enter the computation. Let \(W\subset\Fn\)
be a finite-dimensional subspace on which \(\|\cdot\|\) is a norm, that is,
which contains no nonzero function that vanishes \(\rho\)-almost everywhere;
this is assumed of every space to which \(\eta\) is applied below. The
sampling error of the empirical inner product on \(W\) is
\begin{equation}\label{eq:sampling-error}
    \eta(W)
    \coloneqq
    \sup\bigl\{
        |\ip{u}{v}_n-\ip{u}{v}|
        :
        u,v\in W,\ \|u\|=\|v\|=1
    \bigr\}
    =
    \|\widehat G_W-I\|_2,
\end{equation}
where
\(\widehat G_W\coloneqq[\langle\chi^W_i,\chi^W_j\rangle_n]_{i,j=1}^{\dim W}\)
is
the empirical Gram matrix of a \(\rho\)-orthonormal basis \(\chi^W\) of
\(W\); the equality holds because
\(\ip{u}{v}_n-\ip{u}{v}=a^\top(\widehat G_W-I)b\) for \(u,v\in W\) with
coefficient vectors \(a\) and \(b\) in this basis. For the space \(V\), the
functions that enter the computation are those of \(V\), their images under
the estimator, and the reconstruction observables. Accordingly, set
\[
    W_V\coloneqq V+\widehat TV+\spanop\cG,
    \qquad
    \eta_V\coloneqq\eta(W_V),
    \qquad
    \kappa(\eta)\coloneqq\Bigl(\frac{1+\eta}{1-\eta}\Bigr)^{1/2}.
\]
We assume \(\eta_V<1\), which requires \(n\ge\dim W_V\), where
\(\dim W_V\le2m+q\). For a fixed space with a bounded
\(\rho\)-orthonormal basis, the sampling error is of order \(n^{-1/2}\) with
high probability, for independent samples by the matrix Bernstein
inequality \cite{tropp2012user,cohen2013stability,cohen2017optimal} and
for ergodic trajectories with a spectral or pseudo-spectral gap by
\cite{paulin2015concentration} (\cref{app:empirical-concentration}).

\begin{proposition}[One-step error bound]\label{prop:one-step}
Let \(\eta\coloneqq\eta_V<1\). Then the computed Galerkin operator
satisfies
\begin{equation}\label{eq:compression-error}
    \bigl\|\widehat\cK-P_VT|_V\bigr\|
    \le
    \frac{\Delta(V)+\eta\,\cI(V)}{1-\eta},
\end{equation}
and
\begin{equation}\label{eq:one-step-error}
    \|(T-\widehat\cK)v\|
    \le
    e(V)\|v\|,
    \quad
    v\in V,
    \qquad\text{where}\quad
    e(V)\coloneqq\frac{\cI(V)+\Delta(V)}{1-\eta}.
\end{equation}
\end{proposition}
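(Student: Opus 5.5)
The plan is to compare the two projections \(\widehat P_V\) and \(P_V\) on the space \(W_V=V+\widehat TV+\spanop\cG\) where the computation lives. Then \(\widehat\cK\) and \(P_VT|_V\) differ only through this projection mismatch and through the learning error.

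\textbf{Step 1: preliminaries on \(V\).} Since \(\eta<1\), every \(u\in V\subset W_V\) satisfies \(\|u\|_n^2\ge(1-\eta)\|u\|^2\). Hence \(\ip{\cdot}{\cdot}_n\) is an inner product on \(V\) and \(\widehat P_V\) is well defined on \(\Fn\).

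\textbf{Step 2: the key lemma on projection mismatch.} For \(w\in W_V\), set \(r\coloneqq w-P_Vw\) and \(u\coloneqq\widehat P_Vr\in V\). Here \(P_Vw\) is identified with its representative in \(V\subset\Fn\), so that \(r\in W_V\) (\cref{app:conventions}). Because \(\widehat P_V\) fixes \(V\), we have \(\widehat P_Vw-P_Vw=u\). Since \(r\perp V\) in \(L^2(\rho)\),
\[
    \|u\|_n^2=\ip{u}{r}_n=\ip{u}{r}_n-\ip{u}{r}\le\eta\|u\|\,\|r\|,
\]
using \(u,r\in W_V\) and the definition \eqref{eq:sampling-error}. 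Combining this with \(\|u\|_n^2\ge(1-\eta)\|u\|^2\) gives
\[
    \|(\widehat P_V-P_V)w\|\le\frac{\eta}{1-\eta}\,\|(I-P_V)w\|,
    \qquad w\in W_V .
\]
This is the step I expect to need the most care. The difficulty is not analytic but the bookkeeping of keeping all functions inside \(W_V\) as elements of \(\Fn\). Note that \(Tv\) itself need not lie in \(\Fn\), so \(\widehat P_V\) may only be applied to \(\widehat Tv\), never to \(Tv\).

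\textbf{Step 3: the bound \eqref{eq:compression-error}.} For \(v\in V\), split
\[
    \widehat\cK v-P_VTv=(\widehat P_V-P_V)\widehat Tv+P_V(\widehat T-T)v .
\]
The second term has norm at most \(\Delta(V)\|v\|\). For the first term, apply Step 2 with \(w=\widehat Tv\in W_V\), and use
\[
    \|(I-P_V)\widehat Tv\|\le\|(I-P_V)Tv\|+\|(\widehat T-T)v\|\le(\cI(V)+\Delta(V))\|v\| .
\]
Adding the two contributions gives \(\Delta+\eta(\cI+\Delta)/(1-\eta)=(\Delta+\eta\cI)/(1-\eta)\), which is \eqref{eq:compression-error}.

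\textbf{Step 4: the bound \eqref{eq:one-step-error}.} Write
\[
    (T-\widehat\cK)v=(I-P_V)Tv+(P_VT-\widehat\cK)v .
\]
The first term has norm at most \(\cI(V)\|v\|\), and the second is bounded by \eqref{eq:compression-error}. The triangle inequality then yields \(\cI+(\Delta+\eta\cI)/(1-\eta)=(\cI+\Delta)/(1-\eta)=e(V)\). The two pieces are in fact \(\rho\)-orthogonal, so a Pythagorean bound would be sharper, but the stated form only needs the triangle inequality.

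For nonlinear estimators, \(\widehat T|_V\) is read as in \cref{rem:nonlinear-estimator}, and the argument is unchanged.
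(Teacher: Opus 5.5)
Your proof is correct and follows the paper's own argument. It uses the same splitting \(\widehat\cK v-P_VTv=P_V(\widehat T-T)v+(\widehat P_V-P_V)\widehat Tv\), the projection estimate \eqref{eq:sampling-projection} of \cref{lem:sampling} (which your Step 2 reproves by the same orthogonality argument), and the final decomposition \((T-\widehat\cK)v=(I-P_V)Tv+(P_VT-\widehat\cK)v\) with the triangle inequality.
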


The proposition holds for every space and every estimator and does not
bound \(\cI(V)\) and \(\Delta(V)\); the invariance defect of a selected
space is bounded in \cref{thm:main} and the learning error of an FNO in
\cref{cor:fno-rate}. The estimate decomposes the one-step error into two
parts: the invariance defect \(\cI(V)\), by which the exact Galerkin
compression \(P_VT|_V\) differs from \(T\) on \(V\), and the learning
error, which enters through \eqref{eq:compression-error}. The sampling
error only inflates these terms, by a factor that tends to one as
\(\eta\to0\), because the empirical projection \(\widehat P_V\) is exact
on \(V\) (\cref{lem:sampling}). The spectral bound of \cref{thm:galerkin-spectral}
uses \eqref{eq:compression-error}, and the forecast bound of
\cref{thm:forecast} uses \eqref{eq:one-step-error}.

\paragraph{Data reuse}
The deterministic bounds of this section hold for any sample points,
whether or not these coincide with the states at which the data for
\(\widehat T\) were observed; independence matters only for the
probabilistic bounds on the sampling error. The bounds
stated above for a fixed space apply to \(\eta_V\) when the sample points
are independent of the data used for \(\widehat T\) and for the selection of
\(V\). Otherwise the sampling error has to be controlled uniformly over all
spaces that can occur. When the space is selected on the same sample, this
means uniform control over the candidate family, which is the quantity
\(\eta_\Theta\) of \cref{thm:main}; when the estimator is nonlinear, it
also means uniform control over the spaces generated by the computational
bases (\cref{rem:nonlinear-estimator}). The case in which this uniform
control reduces to a bound for a fixed space is described in
\cref{app:empirical-concentration}; for neural dictionaries with trainable
hidden weights, the uniform control is a hypothesis that is not verified
here.

\subsection{Selection of the observable space}
\label{subsec:dictionary-observable-accuracy}

When the space is not fixed in advance,
\cref{alg:variational-spectral-learning} selects it variationally, by
minimizing the computed objective \eqref{eq:computed-objective} over the
candidates, all of which are evaluated on the same sample. The computed
objective differs from \(\cJ\) in that \(\widehat T\) replaces \(T\) and
the empirical inner product replaces that of \(L^2(\rho)\); the question is
how good the selected space is when measured by \(\cJ\). Let
\(\widehat\theta\in\Theta\) be computed by the algorithm, so that
\begin{equation}\label{eq:near-minimizer}
    \widehat\cJ(\widehat\theta)
    \le
    \inf_{\theta\in\Theta}\widehat\cJ(\theta)+\zeta,
    \qquad
    \zeta\ge0,
\end{equation}
and write \(\widehat V\coloneqq V_{\widehat\theta}\). Since
\(\widehat\theta\) depends on the data, the learning and sampling errors of
\(\widehat V\) are not known in advance, and we use their largest values
over the candidate family,
\[
    \Delta_\Theta\coloneqq\sup_{\theta\in\Theta}\Delta(V_\theta),
    \qquad
    \eta_\Theta\coloneqq\sup_{\theta\in\Theta}\eta_{V_\theta}.
\]

\begin{theorem}[Quasi-optimality of the variational selection]\label{thm:main}
If \(\eta_\Theta<1\), then
\begin{equation}\label{eq:main-bound}
    \cJ(\widehat V)^{1/2}
    \le
    \varepsilon_\Theta
    \coloneqq
    \kappa(\eta_\Theta)^2
    \Bigl(
    \inf_{\theta\in\Theta}\cJ(V_\theta)^{1/2}
    +2\Delta_\Theta
    +\zeta^{1/2}
    \Bigr).
\end{equation}
\end{theorem}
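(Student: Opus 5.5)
The plan is a standard two-sided comparison. Writing \(\widehat\cJ(\theta)=\widehat\cI(V_\theta)^2+\alpha\widehat\cR(V_\theta)^2\), I will show that for every \(\theta\in\Theta\),
\[
\kappa(\eta_\Theta)^{-1}\bigl(\cI(V_\theta)-\Delta_\Theta\bigr)\le\widehat\cI(V_\theta)\le\kappa(\eta_\Theta)\bigl(\cI(V_\theta)+\Delta_\Theta\bigr),
\qquad
\kappa^{-1}\cR(V_\theta)\le\widehat\cR(V_\theta)\le\kappa\,\cR(V_\theta).
\]
Here \(\widehat\cI(V)=\|(I-\widehat P_V)\widehat T|_V\|\) is measured in \(\|\cdot\|_n\) on the unit \(\|\cdot\|_n\)-ball of \(V\), and \(\widehat\cR(V)=\max_j\|(I-\widehat P_V)g_j\|_n\). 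By \cref{lem:basis-linear-surrogate}, the first term of \eqref{eq:computed-objective} is \(\widehat\cI(V)^2\). With these comparisons the theorem follows by chaining inequalities. Fix any \(\theta\). Using \(\sqrt{a+b}\) as the Euclidean norm of \((\sqrt a,\sqrt b)\) and the triangle inequality in \(\mathbb R^2\), I would derive
\[
\cJ(\widehat V)^{1/2}\le\kappa\bigl(\widehat\cJ(\widehat\theta)^{1/2}+\Delta_\Theta\bigr)\le\kappa\bigl(\widehat\cJ(\theta)^{1/2}+\zeta^{1/2}+\Delta_\Theta\bigr)\le\kappa\bigl(\kappa(\cJ(V_\theta)^{1/2}+\Delta_\Theta)+\zeta^{1/2}+\Delta_\Theta\bigr).
\]
The middle step uses \eqref{eq:near-minimizer} and \(\sqrt{x+\zeta}\le\sqrt x+\sqrt\zeta\). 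Since \(\kappa\ge1\), the right-hand side is at most \(\kappa^2(\cJ(V_\theta)^{1/2}+2\Delta_\Theta+\zeta^{1/2})\). Taking the infimum over \(\theta\) gives \eqref{eq:main-bound}.

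For the norm comparisons, note that by \eqref{eq:sampling-error}, on \(W_V\) we have \((1-\eta)\|u\|^2\le\|u\|_n^2\le(1+\eta)\|u\|^2\). Hence \(\|\cdot\|_n\) and \(\|\cdot\|\) are equivalent with constants \(\sqrt{1\pm\eta}\), and operator norms of maps \(V\to W_V\) computed with either pair of norms differ by at most a factor \(\kappa(\eta)\).

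For the reconstruction term, the key fact is that \(\widehat P_V\) and \(P_V\) are best approximations in their own norms. This gives
\[
\|(I-\widehat P_V)g\|_n\le\|(I-P_V)g\|_n\le\sqrt{1+\eta}\,\cR,
\qquad
\cR\le\|(I-\widehat P_V)g\|\le(1-\eta)^{-1/2}\|(I-\widehat P_V)g\|_n.
\]
Both functions lie in \(W_V\), so the equivalence applies; this yields the \(\cR\) comparisons, even with the better constants \(\sqrt{1\pm\eta}\).

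The same minimality handles the invariance defect, pointwise in \(v\in V\). In one direction,
\[
\|(I-\widehat P_V)\widehat Tv\|_n\le\|(I-P_V)\widehat Tv\|_n\le\sqrt{1+\eta}\bigl(\|(I-P_V)Tv\|+\|(\widehat T-T)v\|\bigr)\le\sqrt{1+\eta}\,(\cI+\Delta)\|v\|,
\]
where I use that \(\|(I-P_V)w\|\le\|w\|\). Then I convert \(\|v\|\le(1-\eta)^{-1/2}\|v\|_n\), which gives the upper comparison. In the other direction, \(\|(I-P_V)Tv\|\le\|(I-P_V)\widehat Tv\|+\Delta\|v\|\) and \(\|(I-P_V)\widehat Tv\|\le\|(I-\widehat P_V)\widehat Tv\|\le(1-\eta)^{-1/2}\|(I-\widehat P_V)\widehat Tv\|_n\). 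This uses \(\widehat Tv\in W_V\) and \((I-\widehat P_V)\widehat Tv\in W_V\). Finally \(\|v\|_n\le\sqrt{1+\eta}\|v\|\).

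The main obstacle is bookkeeping, not ideas. First, one must make sure every vector to which the norm equivalence is applied lies in \(W_V\). The vector \((I-P_V)Tv\) does not, which is why I route through \(\widehat T\) and apply the \(L^2\)-side triangle inequality before switching norms. Second, one must track which constant, \(\sqrt{1\pm\eta}\) or \(\kappa\), appears where, so that the total is at most \(\kappa^2\) with only \(2\Delta_\Theta\). One \(\Delta_\Theta\) comes from the lower comparison at \(\widehat V\) and one from the upper comparison at \(V_\theta\). In both, \(\eta_{V_\theta}\le\eta_\Theta\) and \(\kappa\) is monotone, so the uniform constants are justified. For nonlinear estimators, \cref{rem:nonlinear-estimator} ensures that \(\widehat T|_V\) is linear, so the same argument applies.
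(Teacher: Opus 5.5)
Your proposal is correct and follows essentially the paper's route. The paper packages your two-sided comparisons as \cref{lem:objective-equivalence}, passing through the intermediate defect \(\|(I-P_V)\widehat T|_V\|\) and the residual comparison \eqref{eq:sampling-residual}, which is exactly your best-approximation argument, and then runs the same chain through \eqref{eq:near-minimizer}; your first step carrying \(\kappa\Delta_\Theta\) rather than \(\Delta_\Theta\) is a cosmetic difference absorbed by \(\kappa\le\kappa^2\).
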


The selected space is thus nearly as good as the best candidate; with exact
inner products, it is as good up to \(2\Delta_\Theta+\zeta^{1/2}\). The
bound \eqref{eq:main-bound} separates the four error sources of the
selection step, namely the approximation capacity of the candidate family,
\(\inf_{\theta}\cJ(V_\theta)\), the learning error \(\Delta_\Theta\) of the
estimator, the optimization tolerance \(\zeta\), and the sampling
error, which enters only through the factor
\(\kappa(\eta_\Theta)^2=(1+\eta_\Theta)/(1-\eta_\Theta)\). Both terms of the
objective are thereby controlled,
\(\cI(\widehat V)\le\varepsilon_\Theta\) and
\(\cR(\widehat V)\le\varepsilon_\Theta/\alpha^{1/2}\), and so is the one-step
error \eqref{eq:one-step-error} of the selected space,
\(e(\widehat V)\le(\varepsilon_\Theta+\Delta_\Theta)/(1-\eta_\Theta)\),
since \(\eta_{\widehat V}\le\eta_\Theta\) and
\(\Delta(\widehat V)\le\Delta_\Theta\). The computed objective value also
bounds the exact one a posteriori,
\(\cJ(\widehat V)^{1/2}\le\kappa(\eta_\Theta)\,\widehat\cJ(\widehat\theta)^{1/2}
+\Delta_\Theta\) (\cref{lem:objective-equivalence}).

The objective is Lipschitz continuous with respect to \(\dist\), with
constant \(4M_T^2+2\alpha\max_j\|g_j\|^2\), since \(\cI\) and \(\cR\) are
Lipschitz with constants \(2M_T\) and \(\max_j\|g_j\|\) and bounded by
\(M_T\) and \(\max_j\|g_j\|\). This continuity yields a convergence
statement along increasing levels. Suppose that the candidate spaces
\(V_\theta\), \(\theta\in\Theta_p\), of \cref{sec:galerkin-method} become
dense, as \(p\to\infty\), in a class of \(m\)-dimensional subspaces that
contains them and on which \(\cJ\) attains its infimum. Suppose also that
\(\Delta_{\Theta_p}\), \(\eta_{\Theta_p}\), and \(\zeta\) tend to zero
along the estimators, samples, and tolerances used at level \(p\). Then
\eqref{eq:main-bound} shows that \(\cJ(\widehat V)\) converges to this
infimum for the spaces \(\widehat V\) selected at level \(p\). If the
infimum is attained at a \(T\)-invariant subspace \(E_\star\) and the
reconstruction observables span \(E_\star\), then
\(\dist(\widehat V,E_\star)\to0\) by \cref{lem:localization}, and
\cref{thm:galerkin-spectral} gives the convergence of the computed
eigenvalues and spectral subspaces. The choice of the level \(p\) is
discussed in \cref{rem:level}.

\subsection{Spectral approximation}
\label{subsec:galerkin-spectral-convergence}

Let \(E_\star\) be an \(m\)-dimensional \(T\)-invariant subspace, the target
of the spectral computation, so that the eigenvalues to be computed are
those of \(T|_{E_\star}\). Two questions arise. If \(V\) is close to \(E_\star\), how
accurate is the computed spectrum? And how can a space that is selected by
the objective be guaranteed to be close to \(E_\star\)?

\begin{theorem}[Spectral error bound]\label{thm:galerkin-spectral}
Let \(\eta\coloneqq\eta_V<1\) and \(d\coloneqq\dist(V,E_\star)<1\).
Then there is an isometric isomorphism \(\cQ\colon E_\star\to V\) with
\(\|\cQ-\iota\|_{\cB(E_\star,\cH)}\le2d\), where \(\iota\colon E_\star\to\cH\)
is the inclusion and \(\cQ\) is regarded as a map into \(\cH\), such that
\begin{equation}\label{eq:aligned-matrix-bound}
    \bigl\|\cQ^*\widehat\cK\cQ-T|_{E_\star}\bigr\|
    \le
    \frac{\Delta(V)+4M_T\,d}{1-\eta}.
\end{equation}
Consequently, consider a sequence of \(m\)-dimensional subspaces \(V_k\),
possibly with estimators and samples that depend on \(k\), with computed
Galerkin operators \(\widehat\cK_k\) and matrices \(\widehat K_k\). If
\(\Delta(V_k)\to0\), \(\dist(V_k,E_\star)\to0\), and
\(\sup_k\eta_{V_k}<1\), then the eigenvalues of \(\widehat K_k\) converge to
those of \(T|_{E_\star}\), counted with algebraic multiplicity, and the
spectral subspaces of \(\widehat\cK_k\) converge in \(\dist\) to the
corresponding spectral subspaces of \(T|_{E_\star}\).
\end{theorem}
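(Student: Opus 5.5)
We construct \(\cQ\) by aligning \(E_\star\) with \(V\) through the projection \(P_V\). Since \(d=\dist(V,E_\star)<1\) and the dimensions agree, \(P_V|_{E_\star}\colon E_\star\to V\) is injective: for \(u\in E_\star\) we have \(\|P_Vu\|\ge(1-d^2)^{1/2}\|u\|\), because \(\|(I-P_V)u\|\le d\|u\|\). It is therefore bijective. We take \(\cQ\) to be the unitary factor of its polar decomposition, \(\cQ\coloneqq P_VP_\star(P_\star P_VP_\star|_{E_\star})^{-1/2}\), which is an isometric isomorphism \(E_\star\to V\). To bound \(\|\cQ-\iota\|\), write \(A\coloneqq P_V|_{E_\star}\) and use \(\|\cQ-\iota\|\le\|\cQ-A\|+\|A-\iota\|\). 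The second term is \(\|(I-P_V)P_\star\|\le d\). For the first term, \(\cQ-A=\cQ(I-(A^*A)^{1/2})\), and since the spectrum of \(A^*A\) lies in \([1-d^2,1]\), we get \(\|I-(A^*A)^{1/2}\|\le 1-(1-d^2)^{1/2}\le d\). Together these give \(\|\cQ-\iota\|\le2d\).

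For \eqref{eq:aligned-matrix-bound}, we split
\[
\cQ^*\widehat\cK\cQ-T|_{E_\star}=\cQ^*(\widehat\cK-P_VT|_V)\cQ+\bigl(\cQ^*P_VT\cQ-T|_{E_\star}\bigr).
\]
Here \(\cQ^*\) denotes the adjoint of \(\cQ\) as a map \(E_\star\to V\) with the \(\rho\)-inner product on \(V\), so \(\cQ^*=\cQ^{-1}\) and \(\cQ^*P_V=\cQ^*\) on \(\cH\) when \(\cQ\) is viewed as a map into \(\cH\).

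The first term is bounded by \eqref{eq:compression-error}, giving \((\Delta(V)+\eta\,\cI(V))/(1-\eta)\).

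For the second term we use \(\cQ^*\iota=\iota^*\cQ^{*}{}^*\)-type identities. Writing \(\cQ^*T\cQ-T|_{E_\star}\), we add and subtract \(\iota\):
\[
\cQ^*T\cQ-\iota^*T\iota=\cQ^*T(\cQ-\iota)+(\cQ^*-\iota^*)T\iota.
\]
Here we use \(\iota^*T\iota=T|_{E_\star}\), which holds by invariance. Each piece is at most \(2dM_T\), so the second term is at most \(4M_Td\).

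This does not yet produce the stated form, because the first term still carries \(\eta\,\cI(V)/(1-\eta)\). We first bound \(\cI(V)\) by \(d\): for \(v\in V\), \((I-P_V)Tv=(I-P_V)T(v-\iota\cQ^{-1}v)+(I-P_V)T\cQ^{-1}v\). The last summand lies in \((I-P_V)E_\star\), which has size at most \(d\). This gives \(\cI(V)\le M_T(2d+d)\) crudely, or \(\cI(V)\le 2M_Td\) with care. Using \(\eta\cI(V)\le 2\eta M_Td\) and \(4M_Td=(1-\eta)4M_Td/(1-\eta)\), we must check that
\[
\eta\,\cI(V)+(1-\eta)4M_Td\le 4M_Td,
\]
that is, \(\cI(V)\le4M_Td\). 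This follows from the bound on \(\cI(V)\).

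\textbf{Main obstacle.} The delicate point is this bookkeeping of constants. If the crude \(3M_Td\) bound is all we get, it still suffices, since \(3\le4\).

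\textbf{Sequential statement.} \(\cQ_k^*\widehat\cK_k\cQ_k\) is similar to \(\widehat\cK_k\), so its eigenvalues are those of \(\widehat K_k\). By \eqref{eq:aligned-matrix-bound}, it converges in norm to \(T|_{E_\star}\) on the fixed finite-dimensional space \(E_\star\). Continuity of eigenvalues counted with algebraic multiplicity (Kato) gives eigenvalue convergence. For subspaces, the Riesz projections over small contours converge, so the spectral subspaces \(S_k\subset E_\star\) of the aligned operators converge to those of \(T|_{E_\star}\). The spectral subspaces of \(\widehat\cK_k\) are \(\cQ_kS_k\). Since \(\|\cQ_k-\iota\|\le2d_k\to0\), we get \(\dist(\cQ_kS_k,S_k)\to0\), which finishes the argument.
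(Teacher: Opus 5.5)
Your proof is correct and follows the paper's argument: the same polar-factor isometry \(\cQ=R(R^*R)^{-1/2}\) with \(R=P_V\iota\), the same split into \(\cQ^*(\widehat\cK-P_VT|_V)\cQ\) plus \(\cQ^*T\cQ-\iota^*T\iota\), and the same Riesz-projection and isometry argument for the sequential statement. The only difference is that you bound \(\cI(V)\le3M_Td\) directly via \(\cQ^{-1}v\), whereas the paper uses the sharper \(\cI(V)\le2M_Td\) from \cref{thm:quantitative-identification}, which you would also get by using \(P_\star v\) in place of \(\cQ^{-1}v\); as you note, either suffices, since only \(\cI(V)\le4M_Td\) is needed.
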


The operator \(\cQ^*\widehat\cK\cQ=\cQ^{-1}\widehat\cK\cQ\) acts on
\(E_\star\) and has the same eigenvalues as \(\widehat K\), and
\eqref{eq:aligned-matrix-bound} bounds its distance from \(T|_{E_\star}\),
which gives the convergence statement. No
spectral isolation is assumed, and \(E_\star\) may be spanned by
eigenfunctions of eigenvalues embedded in the continuous spectrum, as for
measure-preserving systems with mixed spectrum.

The theorem concerns spaces \(V\) whose dimension equals that of
\(E_\star\), so that all of \(V\) is nearly invariant. In data-driven
spectral computations, \(V\) is commonly larger than the invariant subspace
of interest. The computed eigenpairs that are not associated with this
subspace may then be inaccurate or spurious, and they are discarded, for
instance by means of the residual bound of
\cref{prop:eigenpair-certificate}. This filtering is standard and is not
analyzed here.

We turn to the second question. The distance \(\dist(V,E_\star)\) in
\cref{thm:galerkin-spectral} is not known in practice, whereas the
invariance defect is controlled by the selection step (\cref{thm:main}). The
next theorem shows that the defect and the distance are equivalent near
\(E_\star\), provided that the part of the spectrum that belongs to
\(E_\star\) is separated from the rest, in the following sense. With
respect to \(\cH=E_\star\oplus E_\star^\perp\), the operator \(T\) is block
upper triangular, with diagonal blocks \(A_\star\coloneqq T|_{E_\star}\)
and \(D_\star\coloneqq(I-P_\star)T|_{E_\star^\perp}\) and off-diagonal block
\(B_\star\coloneqq P_\star T|_{E_\star^\perp}\), and we set
\[
    \delta_\star
    \coloneqq
    \inf\bigl\{\|D_\star Z-ZA_\star\|:
    Z\in\cB(E_\star,E_\star^\perp),\ \|Z\|=1\bigr\},
    \qquad
    r_\star\coloneqq\min\Bigl\{1,\frac{\delta_\star}{2\|B_\star\|}\Bigr\},
\]
with \(r_\star\coloneqq1\) if \(B_\star=0\). The number \(\delta_\star\) is
the separation of \(A_\star\) and \(D_\star\)
\cite{stewart1973error,stewart1990matrix}, and \(r_\star\) determines the
size of the neighborhood of \(E_\star\) in which the equivalence holds. The
separation is positive if \(E_\star\) is the spectral subspace of an
isolated cluster (\cref{sec:spec_pert_theory}); for self-adjoint \(T\) and a
cluster formed by the \(m\) largest eigenvalues, it is the spectral gap, and
\(r_\star=1\) (\cref{app:identification}).

\begin{theorem}[Invariance defect and distance]\label{thm:quantitative-identification}
Let \(\delta_\star>0\), and let \(V\) be an \(m\)-dimensional subspace with
\(\dist(V,E_\star)\le r_\star/\sqrt2\). Then
\begin{equation}\label{eq:local-identification}
    \frac{\delta_\star}{4}\,\dist(V,E_\star)
    \le
    \cI(V)
    \le
    2M_T\,\dist(V,E_\star).
\end{equation}
The upper bound holds for every \(m\)-dimensional subspace \(V\) and does
not require \(\delta_\star>0\).
\end{theorem}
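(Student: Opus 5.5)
Both bounds will come from representing \(V\) as the graph of an operator \(Z\in\cB(E_\star,E_\star^\perp)\) over \(E_\star\), that is, \(V=\{e+Ze:e\in E_\star\}\).

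\emph{Upper bound.} This half needs no graph representation. Since \(E_\star\) is invariant, \((I-P_\star)TP_\star=0\). For a unit \(v\in V\), write
\[
(I-P_V)Tv=(I-P_V)T(v-P_\star v)+(I-P_V)TP_\star v .
\]
The first term has norm at most \(M_T\|(I-P_\star)P_V\|=M_T d\). For the second term, \(TP_\star v\in E_\star\), so \((I-P_V)TP_\star v=(I-P_V)P_\star TP_\star v\). Its norm is at most \(\|(I-P_V)P_\star\|\,M_T=M_T d\), because the two subspaces have equal dimension (\cref{sec:spec_pert_theory}). Adding the two terms gives \(\cI(V)\le 2M_T d\) for every \(m\)-dimensional \(V\), with no assumption on \(\delta_\star\).

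\emph{Lower bound.} When \(d<1\), \(P_\star|_V\) is injective, so \(V\) is the graph of some \(Z\in\cB(E_\star,E_\star^\perp)\). Let \(\theta\) be the largest principal angle. Then \(\|Z\|=\tan\theta\) and \(d=\sin\theta\), so \(d\le\|Z\|\). The hypothesis \(d\le r_\star/\sqrt2\le 1/\sqrt2\) gives \(\|Z\|=d/\sqrt{1-d^2}\le\sqrt2\,d\le r_\star\).

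Take \(v=e+Ze\). The \(E_\star\)-component of \(Tv\) is \(A_\star e+B_\star Ze\), and the \(E_\star^\perp\)-component is \(D_\star Ze\). If \(V\) were exactly invariant, the \(\perp\)-component would have to equal \(Z\) applied to the \(E_\star\)-component. This suggests the Riccati residual
\[
\mathcal R(Z)\coloneqq D_\star Z-ZA_\star-ZB_\star Z .
\]
Concretely, set \(u\coloneqq (I-P_\star)Tv-Z P_\star Tv=\mathcal R(Z)e\). The vector \(w\coloneqq TV\ni Tv\) minus its graph point \(P_\star Tv+ZP_\star Tv\in V\) is \((0,u)\). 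I will show that
\[
\|(I-P_V)Tv\|\ge\cos\theta\,\|u\| .
\]
The reason is that the distance from \((0,u)\) to the graph of \(Z\) equals \(\|(I+ZZ^*)^{-1/2}u\|\ge\|u\|/\sqrt{1+\|Z\|^2}\), which is the standard graph-distance formula. Also \(\|v\|\le\sqrt{1+\|Z\|^2}\,\|e\|\).

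Hence
\[
\cI(V)\ge\frac{\|\mathcal R(Z)\|}{1+\|Z\|^2}
\ge\frac{\delta_\star\|Z\|-\|B_\star\|\|Z\|^2}{1+\|Z\|^2}.
\]
Using \(\|B_\star\|\|Z\|\le\|B_\star\| r_\star\le\delta_\star/2\), the numerator is at least \((\delta_\star/2)\|Z\|\). Using \(\|Z\|^2\le 2d^2\le1\), the denominator is at most \(2\). Together with \(\|Z\|\ge d\), this gives \(\cI(V)\ge\delta_\star d/4\).

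The main obstacle is the middle step: checking carefully that \(\cI(V)\) dominates \(\|\mathcal R(Z)\|\) up to the factor \(1/(1+\|Z\|^2)\) in the non-normal, infinite-dimensional setting. The key subpoint is that \(\mathcal R(Z)e\) is exactly the vertical offset of \(Tv\) from the graph. The separation bound \(\|D_\star Z-ZA_\star\|\ge\delta_\star\|Z\|\) then follows directly from the definition of \(\delta_\star\) by homogeneity.
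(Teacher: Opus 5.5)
Your proof is correct and follows essentially the same route as the paper: the splitting \(TP_V=TP_\star P_V+T(I-P_\star)P_V\) for the upper bound, and, for the lower bound, the graph representation of \(V\) over \(E_\star\), the Riccati residual \(\Ric(Z)=D_\star Z-ZA_\star-ZB_\star Z\), the estimate \(\cI(V)\ge\|\Ric(Z)\|/(1+\|Z\|^2)\), and the separation bound with \(\|Z\|\le\sqrt2\,d\le r_\star\). The only cosmetic difference is that you derive \(\|(I-P_V)w\|\ge(1+\|Z\|^2)^{-1/2}\|w\|\) for \(w\in E_\star^\perp\) from the exact graph-distance formula \(\|(I+ZZ^*)^{-1/2}w\|\), whereas the paper obtains it from a one-line Cauchy--Schwarz estimate.
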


The theorem is a variant of Stewart's perturbation theorem for invariant
subspaces \cite{stewart1973error,stewart1990matrix}, in which the
separation is measured at the target subspace \(E_\star\) rather than at
\(V\). The restriction to a neighborhood cannot be removed: a small defect
does not by itself identify a particular invariant subspace as every
invariant subspace has zero defect. The reconstruction term of the
objective brings the selected space into this neighborhood. If the
reconstruction observables resolve \(E_\star\) and lie sufficiently close
to it, and \(\varepsilon_\Theta/\alpha^{1/2}\) is sufficiently small, then
the selected space satisfies
\(\dist(\widehat V,E_\star)\le4\varepsilon_\Theta/\delta_\star\), and
\cref{thm:galerkin-spectral} bounds its spectral error in terms of the
quantities of \cref{thm:main}. The precise conditions and the bias caused by
the reconstruction term are given in \cref{app:identification}
(\cref{cor:identification}).

\subsection{Forecast bounds}
\label{subsec:forecast-estimates}

For an observable \(f\in\spanop\cG\),
\cref{alg:variational-spectral-learning} returns the Galerkin forecast
\[
    f^{(N)}
    \coloneqq
    \widehat\cK^N\widehat P_Vf
    =\sum_{j=1}^m(\widehat K^N\widehat c)_j\widehat\psi_j,
    \qquad
    \widehat c\coloneqq\bigl(\ip{\widehat\psi_j}{f}_n\bigr)_{j=1}^m,
\]
of \(T^Nf=T_{N\tau}f\), the conditional mean of the observable after \(N\)
lag steps. Since \(f\in W_V\), the sampling error of the projection is
covered by \(\eta_V\). Let
\(M\coloneqq\max\{M_T,\|\widehat\cK\|\}\), where \(\|\widehat\cK\|\) is the
operator norm of \(\widehat\cK\) on \(V\) with the \(L^2(\rho)\) norm.

\begin{theorem}[Forecast error bound]\label{thm:forecast}
Let \(\eta\coloneqq\eta_V<1\), and let \(e(V)\) be the one-step error of
\cref{prop:one-step}. Then, for every \(N\ge1\),
\begin{equation}\label{eq:matrix-power-forecast-bound}
    \|T^Nf-f^{(N)}\|
    \le
    \frac{M^N}{1-\eta}\,\|(I-P_V)f\|
    +NM^{N-1}e(V)\,\|P_Vf\|.
\end{equation}
If, moreover, \(V=\widehat V\) is the space selected in \cref{thm:main},
\(\eta_\Theta<1\), and \(f\in\cG\), then
\begin{equation}\label{eq:selected-forecast-bound}
    \|T^Nf-f^{(N)}\|
    \le
    \frac{1}{1-\eta_\Theta}
    \Bigl(
    M^N\,\frac{\varepsilon_\Theta}{\alpha^{1/2}}
    +NM^{N-1}\bigl(\varepsilon_\Theta+\Delta_\Theta\bigr)\|f\|
    \Bigr).
\end{equation}
\end{theorem}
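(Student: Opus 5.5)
We split the error at the projection step and then telescope the remaining powers. Since $f^{(N)}=\widehat\cK^N\widehat P_Vf$, we write
\[
    T^Nf-f^{(N)}
    =T^N\bigl(f-\widehat P_Vf\bigr)
    +\bigl(T^N-\widehat\cK^N\bigr)\widehat P_Vf .
\]
The first term is bounded by $M_T^N\|(I-\widehat P_V)f\|$. To compare the empirical projection with the orthogonal one, we use that $\widehat P_V$ is exact on $V$, so $(I-\widehat P_V)f=(I-\widehat P_V)(I-P_V)f$. With $u\coloneqq(I-P_V)f\in W_V$ (because $f\in\spanop\cG$), $\widehat P_Vu\in V$ is characterized by $\ip{v}{u-\widehat P_Vu}_n=0$ for $v\in V$. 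Since $\ip{v}{u}=0$, the sampling bound gives $|\ip{v}{u}_n|\le\eta\|v\|\|u\|$. Taking $v=\widehat P_Vu$ and using $\|v\|_n^2\ge(1-\eta)\|v\|^2$ then yields $\|\widehat P_Vu\|\le\eta\|u\|/(1-\eta)$. This is the content of \cref{lem:sampling}, which we would invoke directly. Hence $\|(I-\widehat P_V)f\|\le\|u\|+\|\widehat P_Vu\|\le\|(I-P_V)f\|/(1-\eta)$, and the first term is at most $M^N\|(I-P_V)f\|/(1-\eta)$.

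For the second term we telescope:
\[
    T^N-\widehat\cK^N=\sum_{k=0}^{N-1}T^{N-1-k}\bigl(T-\widehat\cK\bigr)\widehat\cK^{k}\quad\text{on }V .
\]
Each summand acts on $v\coloneqq\widehat P_Vf\in V$ and has norm at most $M^{N-1-k}e(V)M^k\|v\|$, by \eqref{eq:one-step-error} and $\widehat\cK^kv\in V$. The sum is therefore bounded by $NM^{N-1}e(V)\|\widehat P_Vf\|$. It remains to replace $\|\widehat P_Vf\|$ by $\|P_Vf\|$. We have $\widehat P_Vf=P_Vf+\widehat P_Vu$, so the estimate of the previous paragraph gives $\|\widehat P_Vf\|\le\|P_Vf\|+\eta\|(I-P_V)f\|/(1-\eta)$. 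This leftover is the main obstacle, since it does not match the stated form directly. The first thing to try is to absorb it into the first term: $NM^{N-1}e(V)\,\eta/(1-\eta)\,\|(I-P_V)f\|$ must be dominated by the slack in $M^N\|(I-P_V)f\|/(1-\eta)$, which is not automatic. If that fails, the cleaner route is a different split: write $T^Nf-f^{(N)}=T^N(I-P_V)f+(T^N-\widehat\cK^N)P_Vf-\widehat\cK^N\widehat P_Vu$. Then the telescoping acts on $P_Vf$ exactly, and the remaining two terms are bounded by $M^N\|u\|+M^N\eta\|u\|/(1-\eta)=M^N\|u\|/(1-\eta)$, using $\|\widehat\cK\|\le M$. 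This gives \eqref{eq:matrix-power-forecast-bound} precisely, so we adopt this decomposition from the start.

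For \eqref{eq:selected-forecast-bound}, we take $V=\widehat V$ and $f\in\cG$. Then $\|(I-P_V)f\|\le\cR(\widehat V)\le\varepsilon_\Theta/\alpha^{1/2}$ by \cref{thm:main}. We also use $\|P_Vf\|\le\|f\|$ and $\eta_{\widehat V}\le\eta_\Theta$, so $1/(1-\eta)\le1/(1-\eta_\Theta)$. Finally, $e(\widehat V)\le(\varepsilon_\Theta+\Delta_\Theta)/(1-\eta_\Theta)$, by the remark after \cref{thm:main}: $\cI(\widehat V)\le\varepsilon_\Theta$ and $\Delta(\widehat V)\le\Delta_\Theta$. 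Substituting these into \eqref{eq:matrix-power-forecast-bound} and factoring out $1/(1-\eta_\Theta)$ gives the claim.
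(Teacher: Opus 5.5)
Your proposal is correct, and with the final split you adopt it is the paper's proof. Since \(\widehat P_V\) is exact on \(V\), your term \(-\widehat\cK^N\widehat P_V(I-P_V)f\) equals \(\widehat\cK^N(P_V-\widehat P_V)f\), so your decomposition coincides with the paper's three-term identity; it is bounded the same way via \eqref{eq:one-step-error}, \eqref{eq:sampling-projection}, and \(M_T^N+\|\widehat\cK\|^N\eta/(1-\eta)\le M^N/(1-\eta)\), and the selected-space bound follows from the same substitutions from \cref{thm:main}. The abandoned first split through \(\widehat P_Vf\) can be dropped from the write-up.
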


For \(N=1\) and \(f\in V\), the bound
\eqref{eq:matrix-power-forecast-bound} is the one-step error
\eqref{eq:one-step-error}; the general case follows by applying
\cref{prop:one-step} at each of the \(N\) steps. The first term accounts for
the part of \(f\) that is not represented in \(V\), and the second
accumulates the one-step error over \(N\) steps; if \(M\le1\), the error
grows at most linearly in \(N\). This bound involves only the space in use
and requires neither the selection result of
\cref{subsec:dictionary-observable-accuracy} nor the identification of an
invariant subspace. For the selected space,
\eqref{eq:selected-forecast-bound} expresses the forecast error through
\(M\) and the quantities of \cref{thm:main}.

If \(\rho\) is invariant and the computed Galerkin operator preserves
constants, leaves the mean-zero subspace of \(V\) invariant, and is stable
on it, the factors \(M^N\) and
\(NM^{N-1}\) are replaced by constants, so that the bound is uniform in
\(N\) (\cref{cor:uniform-forecast}). This result, a form whose constraints
can be imposed from the sample, and the effect of the corresponding
modifications of \(\widehat\cK\) are given in
\cref{app:forecast-supplements}.

\subsection{A learning rate for Fourier neural operators}
\label{subsec:empirical-fno}

The preceding results involve the accuracy of the first stage only through
the learning error \(\Delta\), and any guarantee that controls \(\Delta\)
can be inserted. The learning error \(\Delta\) can also be bounded from
above by the prediction error on held-out data (\cref{app:fno-setting}). For deterministic
dynamics, consider the composition estimator
\(\widehat Tf=f\circ\widehat\Phi\) of \cref{rem:neural-operator}, in which
\(\widehat\Phi\) is a learned approximation of the map \(\Phi\) that
advances the state by one lag, so that
\((\widehat T-T)v=v\circ\widehat\Phi-v\circ\Phi\). Let \(\dist_\Omega\) be a
metric on \(\Omega=Y\), and let \(\Lip(V)\) denote the largest Lipschitz
constant of a function \(v\in V\) with \(\|v\|=1\), on a fixed set
\(\Omega_0\) that contains the values of \(\Phi\) and \(\widehat\Phi\). Then,
as shown in \cref{app:fno-setting},
\begin{equation}\label{eq:composition-learning-error}
    \Delta(V)
    \le
    \Lip(V)\,
    \bigl\|\dist_\Omega(\widehat\Phi,\Phi)\bigr\|_{L^2(\rho)},
\end{equation}
where \(\dist_\Omega(\widehat\Phi,\Phi)\) denotes the function
\(y\mapsto\dist_\Omega(\widehat\Phi(y),\Phi(y))\).
The first factor depends only on \(V\) and \(\Omega_0\). It converts an
error of the state into an error of the observables, and it grows with the
richness of the space. For polynomial dictionaries of degree \(p\) on a
bounded interval on which \(\rho\) has a density bounded below, it is at
most of order \(p^3\) (\cref{rem:level}). The second factor is the error
of the learned map and does not depend on \(V\); for an FNO, it is bounded
in \cite{chandramoorthy2026spectral}. In that setting, \(\Phi\) is the flow
map over a fixed time of a dissipative evolution equation with polynomial
nonlinearity on the \(d_x\)-dimensional torus \(\mathbb T^{d_x}\), the
inputs have Sobolev regularity \(a>d_x/2\), and \(\dist_\Omega\) is the
\(L^2\) distance. The estimator \(\widehat\Phi\) is an empirical risk
minimizer over an FNO class, trained on \(\nop\) exact independent pairs
\((u_i,\Phi(u_i))\) with \(u_i\sim\rho\). By
\cite[Theorem~3.6]{chandramoorthy2026spectral}, there are constants
\(C_0>0\) and \(n_0\) such that, for \(\nop\ge n_0\), with probability at
least \(1-\nop^{-1}\),
\[
    \bigl\|\dist_\Omega(\widehat\Phi,\Phi)\bigr\|_{L^2(\rho)}^2
    \le
    C_0\,\nop^{-1/(6+2d_x/a)} .
\]
The precise assumptions are stated in \cref{app:fno-setting}. Inserting
this rate in \eqref{eq:composition-learning-error} gives the following bound.

\begin{corollary}[Learning error for an FNO]\label{cor:fno-rate}
In the setting of \cref{app:fno-setting}, for \(\nop\ge n_0\), with
probability at least \(1-\nop^{-1}\),
\[
    \Delta(V)\le\Lip(V)\,C_0^{1/2}\,\nop^{-1/(12+4d_x/a)}
\]
simultaneously for all \(m\)-dimensional subspaces \(V\subset\Fn\). In
particular, the learning error in \cref{thm:main} satisfies
\[
    \Delta_\Theta\le\Lip_\Theta\,C_0^{1/2}\,\nop^{-1/(12+4d_x/a)},
    \qquad
    \Lip_\Theta\coloneqq\sup_{\theta\in\Theta}\Lip(V_\theta).
\]
\end{corollary}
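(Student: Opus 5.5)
The corollary combines the deterministic composition estimate \eqref{eq:composition-learning-error} with the high-probability FNO rate of \cite[Theorem~3.6]{chandramoorthy2026spectral}. The plan is to fix the probability event once, before any subspace is chosen, and then apply the deterministic bound on that event.

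First, let \(\mathcal E\) be the event, of probability at least \(1-\nop^{-1}\) for \(\nop\ge n_0\), on which
\[
    \bigl\|\dist_\Omega(\widehat\Phi,\Phi)\bigr\|_{L^2(\rho)}^2\le C_0\,\nop^{-1/(6+2d_x/a)} .
\]
This event depends only on the training pairs and on \(\widehat\Phi\). It does not involve \(V\), \(\Theta\), or the Galerkin sample points.

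Second, on \(\mathcal E\), fix an arbitrary \(m\)-dimensional subspace \(V\subset\Fn\) and invoke \eqref{eq:composition-learning-error}. Taking square roots, and using \((\nop^{-1/(6+2d_x/a)})^{1/2}=\nop^{-1/(12+4d_x/a)}\), gives
\[
    \Delta(V)\le\Lip(V)\,C_0^{1/2}\,\nop^{-1/(12+4d_x/a)} .
\]
The only thing to check is the hypothesis of \eqref{eq:composition-learning-error} that \(\Omega_0\) contains the values of both \(\Phi\) and \(\widehat\Phi\). This should be part of the setting in \cref{app:fno-setting}, for example via a clipping of the FNO outputs to a bounded set.

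For completeness, the composition estimate itself follows from a pointwise argument. For \(v\in V\) with \(\|v\|=1\), we have \(|v(\widehat\Phi(y))-v(\Phi(y))|\le\Lip(V)\dist_\Omega(\widehat\Phi(y),\Phi(y))\), and integrating the square against \(\rho\) gives the bound. Since \(\widehat T\) is linear here, no basis-dependence issue of \cref{rem:nonlinear-estimator} arises.

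The word "simultaneously" is then automatic. The bound holds for every \(V\) on the single event \(\mathcal E\), so no union bound over subspaces is needed. Because the right-hand side depends on \(V\) only through \(\Lip(V)\), taking the supremum over \(\theta\in\Theta\) of \(\Delta(V_\theta)\le\Lip(V_\theta)C_0^{1/2}\nop^{-1/(12+4d_x/a)}\) yields the stated bound on \(\Delta_\Theta\) with \(\Lip_\Theta\). If \(\Lip_\Theta=\infty\), the bound is trivial.

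The main point needing care is measurability and uniformity. The selected space \(\widehat V\) depends on data, so a bound that held only with high probability for each fixed \(V\) would not suffice. The argument avoids this because the randomness enters solely through the \(V\)-independent factor \(\|\dist_\Omega(\widehat\Phi,\Phi)\|_{L^2(\rho)}\). I would state this explicitly in the proof.
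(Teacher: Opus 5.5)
Your proof is correct and follows the paper's argument: insert the mean-square FNO rate of \cite[Theorem~3.6]{chandramoorthy2026spectral} into the composition estimate \eqref{eq:composition-learning-error} and take square roots. Uniformity over all \(V\) holds because the only random factor is \(\|\dist_\Omega(\widehat\Phi,\Phi)\|_{L^2(\rho)}\). The condition on \(\Omega_0\) that you flag is not obtained by clipping: \cref{app:fno-setting} assumes that \(\Omega_0\) contains \(\Phi(\operatorname{supp}\rho)\) and the values of every map in the FNO class, which is also what makes \(\Lip(V)\) independent of \(\widehat\Phi\).
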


With this bound, the learning error in the one-step, spectral, and forecast
estimates of this section is expressed through the number \(\nop\) of
training pairs.
The cited rate concerns exact independent pairs and an exact empirical risk
minimizer; it does not cover dependent trajectory data, noisy pairs, or the
optimization error of training.

\section{Numerical experiments}\label{sec:numerical-experiments}
This section applies the BSO method to two spectral benchmarks, the
Ornstein--Uhlenbeck and triple-well diffusions, and to four forecasting
problems involving Lorenz--63 \cite{lorenz1963deterministic}, Lorenz--96
\cite{lorenz1995predictability}, global sea-surface temperature (SST), and
resting-state electroencephalography (EEG). A further experiment treats the
case in which the number of observed transition pairs is smaller than the
dimension of the Galerkin matrix (\cref{subsec:small-data-experiment}).
Throughout this section, \emph{BSO} denotes the two-stage method of
\cref{sec:methodology} with an FNO \cite{li2021fourier} as the operator
estimator. In the spectral examples, \(\widehat T\) approximates \(T_\tau\)
directly and is nonlinear in the observable
(\cref{rem:nonlinear-estimator}), whereas in the forecasting examples it is
the composition estimator of \cref{rem:neural-operator}. The experiments
are not designed to test the bounds of \cref{sec:variational}
quantitatively, and several lie outside their hypotheses. Implementation
details are collected in \cref{app:numerical-details}.

\subsection{Spectral approximation}\label{subsec:spectral-experiments}

\subsubsection{Ornstein--Uhlenbeck process}
\label{subsubsec:ou-experiment}

We consider
\[
    \dd X_t=-X_t\dd t+\dd W_t,
    \qquad
    \cL f=-xf'+\tfrac12f''.
\]
This benchmark has an analytic spectrum, against which the computed rates
and modes can be checked. At lag \(\tau=0.15\), BSO learns the fixed-lag
operator and a 20-dimensional Galerkin space without using analytic
eigenvalues or eigenfunctions. The training pairs \((f,T_\tau f)\) are
obtained by evolving the observables \(f\) over one lag with a numerical
discretization of the backward equation (\cref{app:numerical-details}).
\Cref{fig:ou-learned} compares the recovered rates with the exact generator
spectrum \(\lambda_n=-n\), \(n=0,1,2,\ldots\), and the recovered modes with
the Hermite eigenfunctions \cite{pavliotis2014stochastic}. 

\begin{figure}[tbp]
\centering
\includegraphics[width=0.60\linewidth]{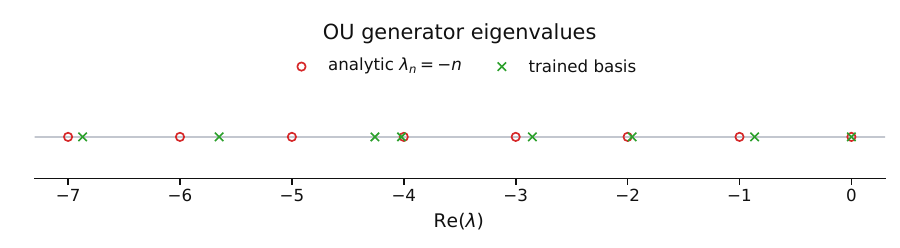}
\par\medskip
\includegraphics[width=0.60\linewidth]{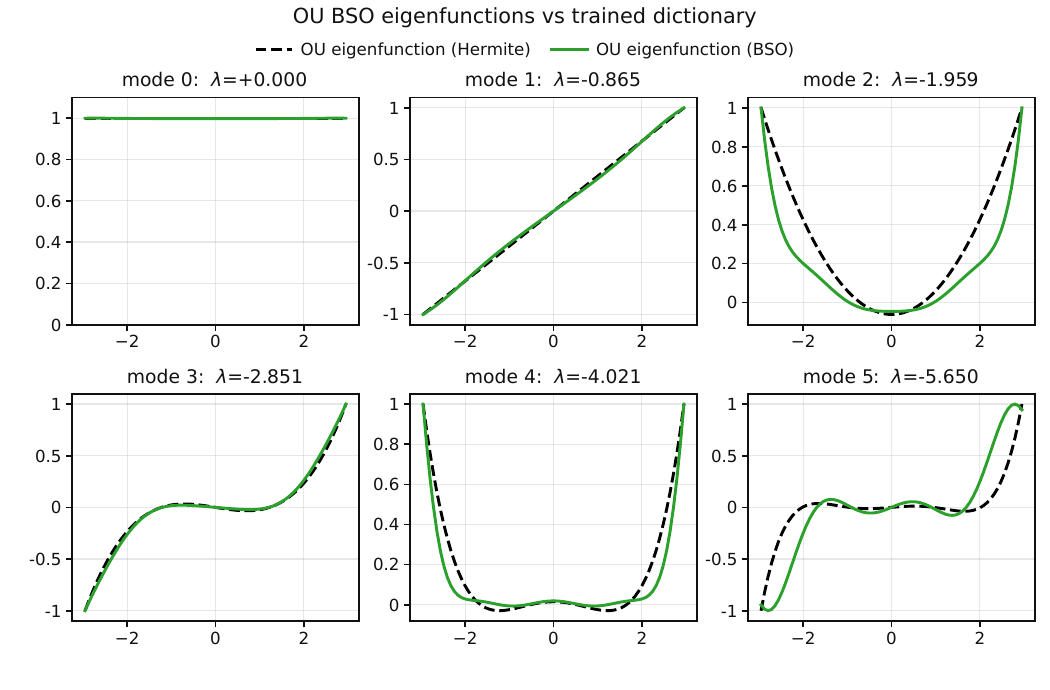}
\caption{OU spectral approximation at \(\tau=0.15\). Top:
analytic generator rates and the BSO (learned dictionary) estimates, labeled
``trained basis'' in the legend, each matched to the nearest analytic rate.
Bottom: postprocessed BSO (learned dictionary) eigenfunctions and the
analytic Hermite modes on \(|x|\le3\).}
\label{fig:ou-learned}
\end{figure}

\subsubsection{Triple-well diffusion}
\label{subsubsec:triple-well-experiment}

The second benchmark is the triple-well reflected diffusion of
\cite{xu2025koopmansemigroup},
\[
    \dd X_t=-\nabla U(X_t)\dd t+1.09\dd W_t,
\]
with the three-hole potential \(U\) of \cite{metzner2006illustration}
specified in \cref{app:numerical-details}.
The operator is learned at the lag \(\tau=0.30\), and the multipliers in
\cref{fig:triple-well-spectrum} are those at the lag \(3\tau=0.9\). A
numerical reversibility constraint is imposed by symmetrizing the reduced
matrix; this does not assert exact self-adjointness of the uniform-grid
projection, and its effect on the error bounds is described in
\cref{rem:postprocessing}. The experiment concerns the three leading
eigenpairs, for which \cref{fig:triple-well-spectrum} shows the multipliers
and rates together with the left--right and upper--lower metastable
partitions. The Galerkin space is larger than the invariant subspace
spanned by these modes, so the remaining computed eigenvalues are not
expected to be accurate (\cref{subsec:galerkin-spectral-convergence}).

\begin{figure}[tbp]
\centering
\includegraphics[width=0.60\linewidth]{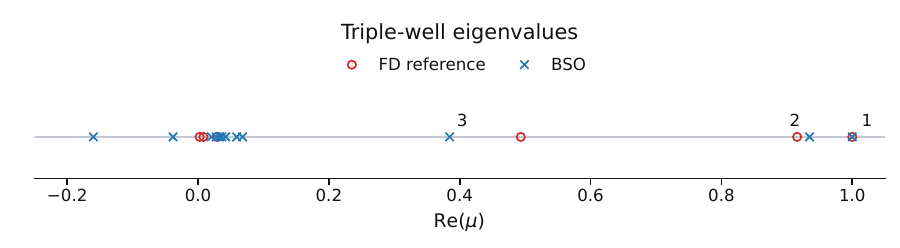}
\par\medskip
\includegraphics[width=0.60\linewidth]{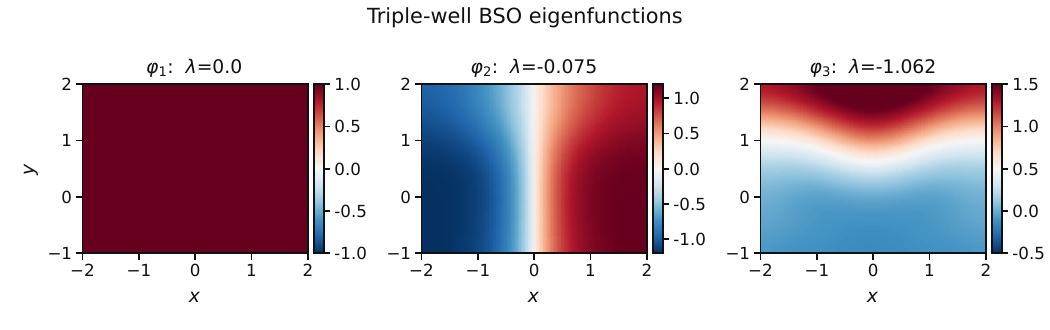}
\caption{Triple-well spectral approximation at the lag \(3\tau=0.9\). Top:
leading BSO (learned dictionary) multipliers and finer-grid
finite-difference references. Bottom: the stationary, left--right, and
upper--lower modes, paired by one-to-one matching; the panel labels give
the continuous-time rates \(\lambda=(3\tau)^{-1}\log\mu\) corresponding to
the multipliers \(\mu\) of the top panel (\cref{sec:spec_pert_theory}).}
\label{fig:triple-well-spectrum}
\end{figure}

% Keep the subsection heading on the same page as its first subsubsection.
\needspace{4\baselineskip}
\subsection{Forecasting}\label{subsec:forecasting}
\subsubsection{Chaotic systems}
\label{subsec:lorenz-experiments}

We compare autonomous BSO rollouts with autoregressive rollouts of the
same fixed FNO. Lorenz--63 \cite{lorenz1963deterministic} and Lorenz--96 \cite{lorenz1995predictability} are given by
\[
    \dot x=10(y-x),\qquad
    \dot y=x(28-z)-y,\qquad
    \dot z=xy-\tfrac83z,
\]
and
\[
    \dot x_i=(x_{i+1}-x_{i-2})x_{i-1}-x_i+8,
    \qquad i=1,\ldots,20,
\]
with periodic indices. In both systems the FNO acts on a smooth curve
sampled at 20 sensor sites that encodes the state. Let \(y_{o,j,k}\) and
\(\widehat y_{o,j,k}\)
denote the observed and predicted values at held-out origin
\(o\in\{1,\ldots,64\}\), sensor site \(j\in\{1,\ldots,20\}\), and lead
\(k\). Forecast errors are pooled over origins and sites,
\[
    E_k^{\rm Lor}
    \coloneqq
    \frac{\bigl(\sum_{o,j}
    |\widehat y_{o,j,k}-y_{o,j,k}|^2\bigr)^{1/2}}
    {\bigl(\sum_{o,j}|y_{o,j,k}|^2\bigr)^{1/2}}.
\]
For Lorenz--63 (\cref{fig:lorenz-forecast}), the two methods have similar
errors at short leads: the autoregressive FNO has lower error for leads up
to about \(t=1\), and BSO has lower error thereafter. For Lorenz--96, BSO
has lower error throughout the positive forecast horizon. At long lead
times, the forecast errors of both methods remain bounded over the tested
horizon.

\begin{figure}[tbp]
\centering
\includegraphics[width=0.60\linewidth]{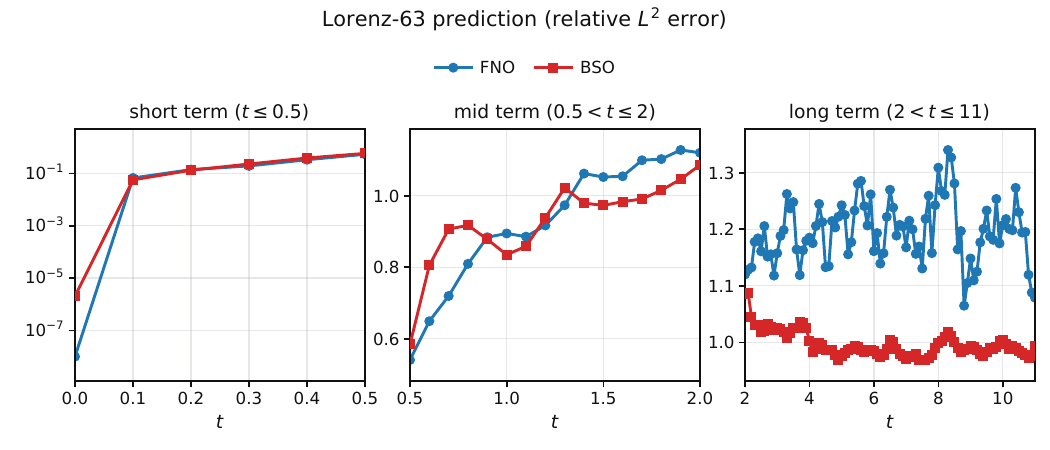}
\par\medskip
\includegraphics[width=0.60\linewidth]{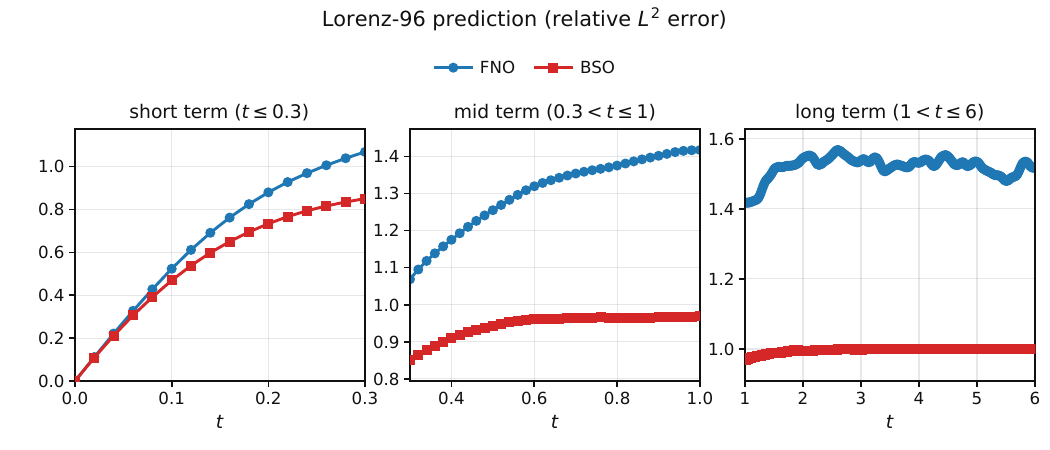}
\caption{Pooled relative \(L^2\) forecast error of BSO (fixed dictionary)
and the autoregressive FNO over 64 held-out origins for Lorenz--63 (top) and
Lorenz--96 (bottom). The short-term
Lorenz--63 panel uses a logarithmic vertical axis; the remaining panels use
linear axes with different vertical scales.}
\label{fig:lorenz-forecast}
\end{figure}

\subsubsection{Global sea-surface temperature}
\label{subsec:sst-experiment}

The SST experiment uses global fields on a \(32\times64\)
latitude--longitude grid from 1993--2022. SST and the surface wind stress are derived from the ERA5 reanalysis
\cite{hersbach2020era5} as distributed in the WeatherBench~2 archive \cite{rasp2024weatherbench2}, while the remaining ocean fields are taken from the NCEP Global Ocean Data Assimilation System (GODAS) \cite{behringer2004godas}. The 11 input fields are listed in \cref{tab:sst-fields} of \cref{app:numerical-details}. The model is selected using 1993--2016 for fitting and 2017--2019 for chronological validation, then refitted on 1993--2019 and evaluated on 2020--2022. All 11 fields initialize the learned models, but only SST is verified.

In this experiment the reduced matrix is not the plain Galerkin compression
of \cref{sec:methodology}. Instead, it is fitted by ridge regression to a
blended one-day target, which for a weight \(\beta=0.4\) is \(1-\beta\)
times the observed next-day coefficient vector plus \(\beta\) times the
coefficient vector of the FNO prediction. This target interpolates between
EDMD (\(\beta=0\)) and the BSO compression (\(\beta=1\)) compared in
\cref{subsec:smalldata}; \cref{rem:bias-variance} describes the bias and
the fluctuation of the cross matrix of such a blended target, for
independent data and without regularization. The fitted matrix is then
subjected to stability adjustments (\cref{rem:postprocessing}); the
settings are given in \cref{app:numerical-details}.

Forecast skill (\cref{fig:sst-results}, top) is evaluated through day 90
over 96 held-out origins using the area-weighted root-mean-square error
(RMSE), the mean absolute error (MAE), and the centered anomaly correlation
coefficient (ACC). The baselines are persistence, calendar-day climatology,
and damped persistence,
\[
    \operatorname{SST}_{\rm DP}(t+\ell)
    \coloneqq
    \operatorname{SST}_{\rm clim}(t+\ell)+0.985^\ell
    \bigl(\operatorname{SST}(t)-\operatorname{SST}_{\rm clim}(t)\bigr),
\]
where \(\ell\) is the lead in days, \(\operatorname{SST}_{\rm clim}\)
denotes the calendar-day climatology, and the decay factor is selected from
the fitting period. BSO is competitive with damped persistence at short
leads and retains anomaly skill over the longer evaluated horizon, as the
ACC panel of \cref{fig:sst-results} shows.

A separate 540-day test (\cref{fig:sst-results}, bottom) assesses numerical
stability rather than forecast skill. Over 48 origins, the BSO rollout
remains finite through day 540 with SST RMSE \(0.66\) K, the FNO reaches
\(816\) K, and the spherical Fourier neural operator
(SFNO)~\cite{bonev2023spherical} first becomes nonfinite at day 171. No
clipping or replacement is used. These values do not rank
seasonal-to-annual forecast skill; they show that the BSO rollout remains
numerically stable over the test period, whereas the autoregressive
rollouts of the neural operators do not.

\begin{figure}[tbp]
\centering
\includegraphics[width=0.60\linewidth]{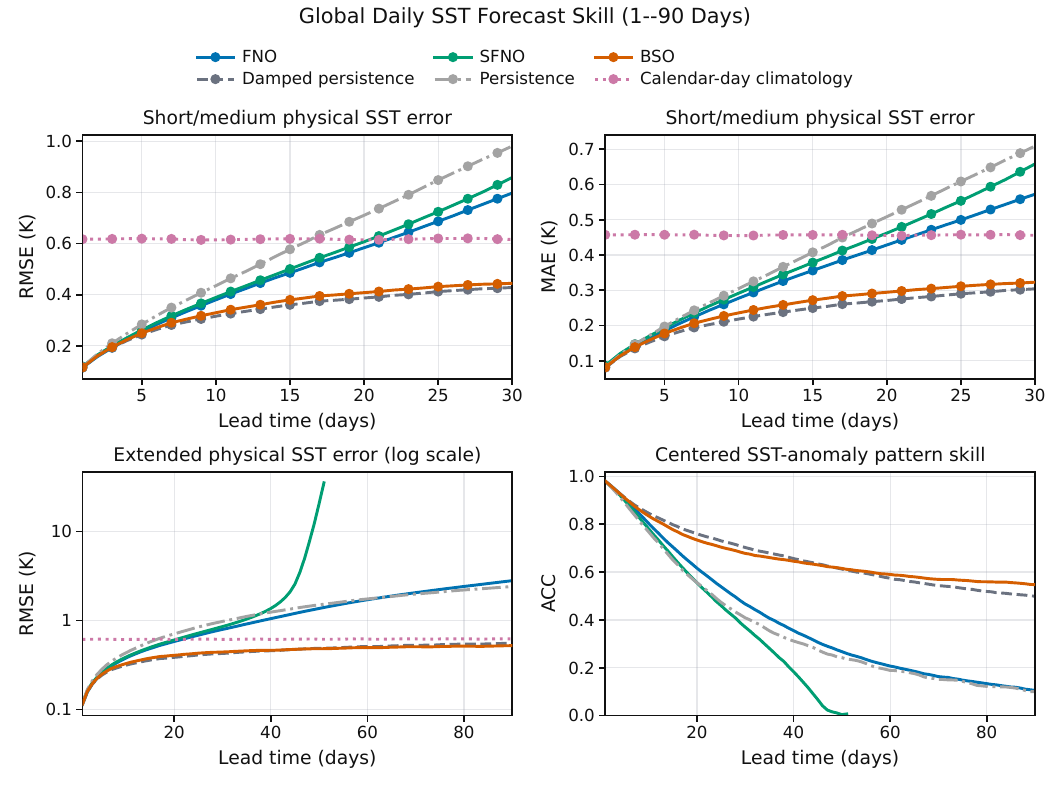}
\par\medskip
\includegraphics[width=0.60\linewidth]{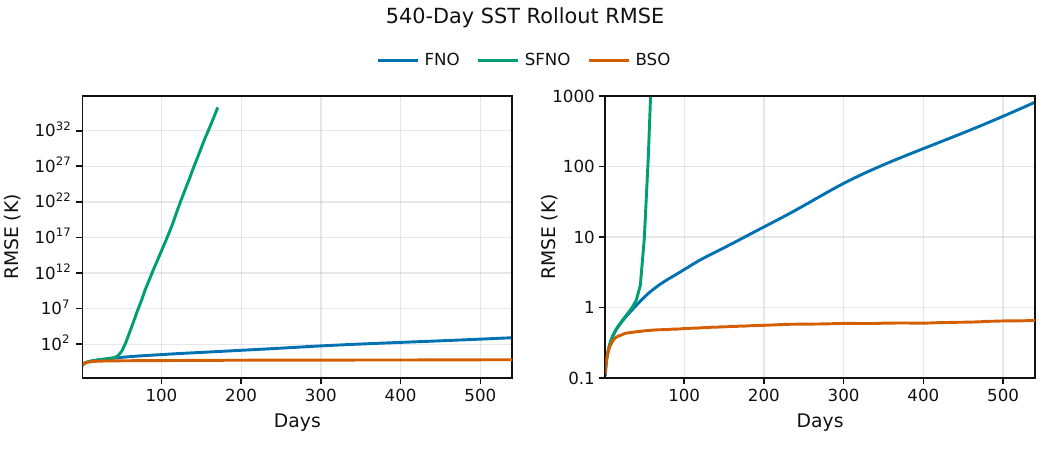}
\caption{Global SST evaluation. Top: forecast skill of BSO (fixed
dictionary), FNO, SFNO, and the three baselines over 96
held-out 2020--2022 origins, showing area-weighted RMSE and MAE through day 30 and RMSE and centered anomaly ACC through day 90. The SFNO curve ends at the first lead time for which scores are not available for all 96 origins. Bottom: area-weighted SST RMSE in the 540-day numerical-stability test over 48 origins, shown over the full and a zoomed logarithmic range.}
\label{fig:sst-results}
\end{figure}

\subsubsection{Resting-state EEG rollout}
\label{subsec:eeg-experiment}

The final experiment uses subject S029, run R02, from the PhysioNet EEG Motor Movement/Imagery Dataset
\cite{schalk2009eegmmidb,schalk2004bci2000,pollard2026physionet,goldberger2000physionet}. Evaluation consists of one autonomous 40-step rollout from 15 to 19 seconds, with no observations supplied after
launch. With \(y_\ell\) the observed and \(\widehat y_\ell\) the predicted
channel vectors at step \(\ell\), the cumulative relative error through
step \(k\) is
\[
    E_k^{\rm EEG}
    \coloneqq
    \frac{\left(\sum_{\ell=1}^k\|\widehat y_\ell-y_\ell\|_2^2\right)^{1/2}}
         {\left(\sum_{\ell=1}^k\|y_\ell\|_2^2\right)^{1/2}}.
\]
The comparators are direct autoregressive FNO, EDMD with dictionary learning (EDMD-DL) \cite{li2017extended}, Gaussian-kernel EDMD \cite{williams2015kernel}, and an affine vector-autoregressive model in principal component analysis (PCA) coordinates (PCA--VAR). The direct FNO and the FNO used inside BSO have different fixed hyperparameters. This comparison therefore does not isolate the effect of the Galerkin step. The reduced matrix of BSO is ridge-regularized (\cref{app:numerical-details}); the effect of such a modification on the bounds is described in \cref{rem:postprocessing}.

BSO has the lowest cumulative error through step 40, \(E_{40}^{\rm EEG}\)
(\cref{tab:eeg-result,fig:eeg-rollout}), which is \(14.2\%\) below that of the direct FNO. This is
a descriptive result for one subject, one run, and one forecast window;
resting EEG is only approximately autonomous and no uncertainty interval is
available.

\begin{table}[tbp]
\centering
\caption{EEG experiment: relative \(L^2\) error accumulated through step 40
of the four-second rollout, \(E_{40}^{\rm EEG}\), and the relative reduction
achieved by BSO with respect to each comparator, computed from unrounded
errors.}
\label{tab:eeg-result}
\begin{tabular}{lcc}
\hline
Method & Error \(E_{40}^{\rm EEG}\) & Reduction with BSO \\
\hline
BSO & 0.749 & -- \\
Direct FNO & 0.874 & \(14.2\%\) \\
EDMD-DL & 0.941 & \(20.4\%\) \\
Kernel EDMD & 0.945 & \(20.7\%\) \\
PCA--VAR & 0.963 & \(22.2\%\) \\
\hline
\end{tabular}
\end{table}

\begin{figure}[tbp]
\centering
\includegraphics[width=0.60\linewidth]{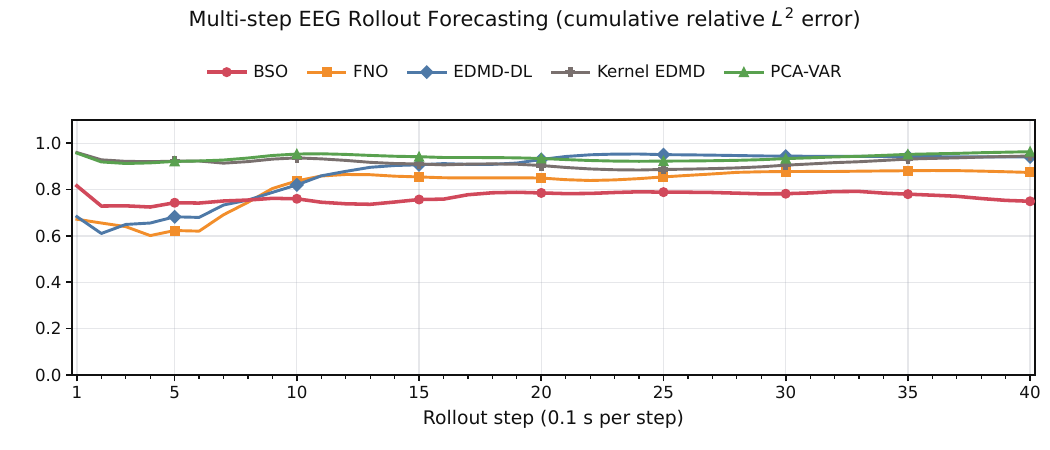}
\caption{Cumulative relative \(L^2\) error of BSO (learned dictionary) and
the comparators for the four-second autonomous rollout on the S029R02 resting
EEG record. Each step advances 0.1 seconds.}
\label{fig:eeg-rollout}
\end{figure}

\subsection{Small-data comparison between BSO and rank-deficient EDMD}\label{subsec:small-data-experiment}
This experiment illustrates the finite-sample distinction described in
\cref{subsec:smalldata}. The system is Lorenz--96 with 40 sites, and the
dictionary is linear in the state, so that \(m=40\); the configuration
therefore differs from the 20-site system and nonlinear dictionary of
\cref{subsec:lorenz-experiments}.

The BSO Galerkin matrix is constructed as follows. A neural operator is
fitted to the \(\nop\) transition pairs and then held fixed. Weighted nodes
\((x_k,w_k)\), \(k=1,\ldots,n\), of the form used in
\cref{sec:computational-realization} are built from states \(x_k\) generated
by applying the fixed operator to the observed states, with weights \(w_k\)
summing to one. These generated states serve only as integration nodes for
the Galerkin inner product. With the data matrices \(\mathbf\Psi\) and
\(\mathbf\Psi_{\widehat T}\) of \eqref{eq:bso-least-squares} evaluated at
the nodes \(x_k\), and with \(w\coloneqq(w_1,\ldots,w_n)\), the matrix
is the weighted least-squares fit
\[
    \widehat K
    \coloneqq
    \bigl(\mathbf\Psi^\top\operatorname{diag}(w)\mathbf\Psi\bigr)^{-1}
    \mathbf\Psi^\top\operatorname{diag}(w)\mathbf\Psi_{\widehat T},
\]
which reduces to \eqref{eq:bso-least-squares} for equal weights. It is the
matrix, in the basis \(\psi_1,\ldots,\psi_m\), of the computed Galerkin
operator of \cref{sec:computational-realization} for these weights. The
nodes contain no observed states
\(X_k^+\), so the Galerkin matrix is defined for every \(\nop\) for which
\(\mathbf\Psi^\top\operatorname{diag}(w)\mathbf\Psi\) is nonsingular. The
EDMD baselines are computed in the same dictionary from the same \(\nop\)
observed pairs, and the one-step error of the neural operator itself is
included as a reference for the accuracy of the estimator before Galerkin
compression.

In \cref{fig:smalldata}, for \(\nop<m\), BSO has a lower one-step prediction
error on held-out transition pairs, measured in the relative \(L^2\) norm,
than the EDMD variants. The ridge-regularized and minimum-norm
\(m\)-dimensional fits have high one-step error, and the rank-restricted
baseline reaches the accuracy of the full fit only once \(\nop\) exceeds
\(m\). The one-step predictions of BSO differ from those of the neural
operator from which it is projected by at most \(0.024\) in relative
\(L^2\) norm throughout. A full analysis of this regime is left to future
work.

\begin{figure}[tbp]
    \centering
    \includegraphics[width=0.45\linewidth]{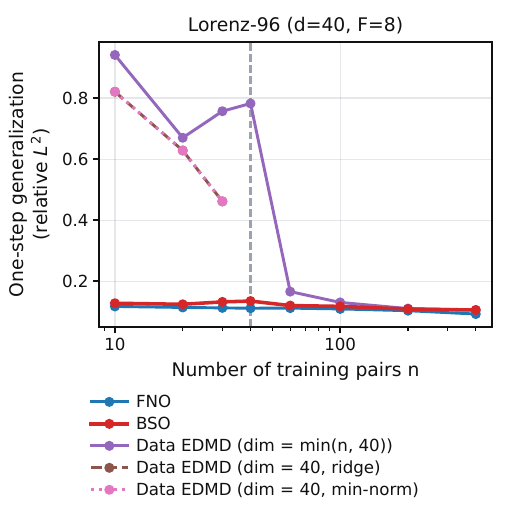}
    \caption{One-step prediction error on held-out transition pairs
    (relative \(L^2\)) versus the number \(\nop\) of observed transition
    pairs for Lorenz--96 with 40 sites and \(m=40\) (vertical dashed line).
    BSO (fixed dictionary) is available for every \(\nop\) shown. The dashed and
    dotted curves are the ridge-regularized and minimum-norm \(m=40\) EDMD
    fits, which coincide to within \(10^{-4}\); the solid EDMD curve uses
    the first \(\min\{\nop,m\}\) dictionary coordinates. In the legend,
    \(n\) stands for \(\nop\).}
    \label{fig:smalldata}
\end{figure}

\section{Conclusion}\label{sec:conclusion}
We developed a two-stage Galerkin framework, the BSO method, in which a
fixed-lag solution operator of the Kolmogorov backward equation is first
estimated from data and then projected onto an observable space that is
fixed in advance or selected variationally. Since the operator is
approximated before the coordinates are fixed, the same estimator can be
reused across candidate spaces. The invariance of a candidate space is
assessed through conditional means rather than single transitions, and the
Galerkin matrix is defined even when fewer transition pairs than basis
functions are observed.

The error analysis traces the learning error of the estimator, the
sampling error of the Galerkin inner products, and the invariance and
reconstruction properties of the observable space to the computed
eigenvalues, invariant subspaces, and forecasts. The sampling error enters
only through a factor that tends to one. Near an isolated spectral
cluster, the invariance defect is locally equivalent to the distance from
the invariant subspace. The forecast bound is uniform in the horizon when
the measure is invariant and the reduced model preserves constants and is
stable on mean-zero observables. For deterministic dynamics, a learning
rate for FNOs gives one explicit bound for the first stage. 

The numerical
examples apply this framework to spectral and forecasting problems. We compare the Ornstein--Uhlenbeck and triple-well spectra to analytic
and finite-difference references. The Lorenz, SST, and EEG rollouts
are compared with autoregressive neural operators and standard baselines, showing long-rollout stability.
A small-data experiment demonstrates that BSO can result in reduced prediction error than EDMD when fewer transition pairs
than basis functions are observed.

Three limitations indicate directions for further work. First, uniform
control of a general nonlinear operator estimator over the candidate
families is method dependent and is stated here as a hypothesis. Second,
the FNO rate uses exact independent input--output pairs of a deterministic
map and an exact empirical risk minimizer, so that dependent trajectory
data, noisy pairs, the optimization error of training, and stochastic
dynamics are not covered by an explicit rate. Third, partial observations
may identify only a compressed solution operator. A quantitative analysis
of the regime with fewer transition pairs than basis functions is also
left to future work.

\section*{Declarations}
Generative AI tools were used to assist with the code used in the numerical experiments, check computations, and edit the language and organization of the manuscript text. Authors assume responsibility for all content.

\section*{Acknowledgments}
We thank Zongyi Li, Zongren Zou, and Houman Owhadi for helpful discussions.

\appendix
% Make cleveref refer to appendices as "Appendix A", "Appendix D.3"
% independently of the TeX installation (see the remark hook in the preamble).
\crefalias{section}{appendix}
\crefalias{subsection}{subappendix}

\section{Function-space conventions and the learning error}
\label{app:conventions}

Point evaluations are well defined on the space \(\Fn\) of
\cref{sec:data-to-backward}, whereas norms and inner products in
\(L^2(\rho)\) and the operator \(T\) are applied to elements of \(\Fn\)
through their \(\rho\)-equivalence classes. The estimator \(\widehat T\) acts on the
functions themselves and need not extend to \(\cH\); for the composition
estimator of \cref{rem:neural-operator}, \(f\circ\widehat\Phi\) is defined
pointwise on \(\Fn\) but need not be well defined on \(\rho\)-equivalence
classes. For a finite-dimensional subspace \(V\subset\Fn\) on which
\(\|\cdot\|\) is a norm, the restriction \(\widehat T|_V\colon V\to\Fn\) is
regarded as an element of \(\cB(V,\cH)\) through \(\rho\)-equivalence
classes, and the learning error \(\Delta(V)\) of \eqref{eq:learning-error}
is understood in this sense. For a nonlinear estimator
(\cref{rem:nonlinear-estimator}), the first part of the following lemma
applies to the basis that defines \(\widehat T|_V\).

\begin{lemma}\label{lem:basis-linear-surrogate}
Let \(\Psi=(\psi_1,\ldots,\psi_m)\) be a basis of \(V\) with Gram matrix
\(G_\Psi\coloneqq[\ip{\psi_i}{\psi_j}]_{i,j=1}^m\). Then
\begin{equation}\label{eq:surrogate-approximation-error}
    \Delta(V)
    \le
    \lambda_{\min}(G_\Psi)^{-1/2}
    \Bigl(\sum_{j=1}^m
    \|\widehat T\psi_j-T\psi_j\|^2
    \Bigr)^{1/2}.
\end{equation}
Moreover, if \(\ip{\cdot}{\cdot}_n\) is an inner product on
\(V+\widehat TV\), then the matrices \(\widehat K\) and \(\widehat L\) of
\cref{sec:computational-realization} satisfy
\begin{equation}\label{eq:surrogate-defect}
    \sup_{v\in V\setminus\{0\}}
    \frac{\|(I-\widehat P_V)\widehat T v\|_n^2}{\|v\|_n^2}
    =\lambda_{\max}\!\left(\widehat L-\widehat K^\top\widehat K\right).
\end{equation}
\end{lemma}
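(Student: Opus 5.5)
For \eqref{eq:surrogate-approximation-error}, the plan is to write a general \(v\in V\) as \(v=\sum_j a_j\psi_j\). Then \(\|v\|^2=a^\top G_\Psi a\ge\lambda_{\min}(G_\Psi)\|a\|_2^2\). Since both \(\widehat T|_V\) and \(T|_V\) are linear on \(V\) (for a nonlinear estimator, by the convention of \cref{rem:nonlinear-estimator} applied to the basis \(\Psi\)), we have \((\widehat T-T)v=\sum_j a_j(\widehat T\psi_j-T\psi_j)\). The triangle inequality followed by Cauchy--Schwarz in \(\mathbb R^m\) then gives
\[
    \|(\widehat T-T)v\|
    \le\|a\|_2\Bigl(\sum_j\|\widehat T\psi_j-T\psi_j\|^2\Bigr)^{1/2}
    \le\lambda_{\min}(G_\Psi)^{-1/2}\|v\|\Bigl(\sum_j\|\widehat T\psi_j-T\psi_j\|^2\Bigr)^{1/2}.
\]
Taking the supremum over \(\|v\|=1\) yields the claim. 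Here \(\lambda_{\min}(G_\Psi)>0\) because \(\|\cdot\|\) is a norm on \(V\).

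For \eqref{eq:surrogate-defect}, I will work with the computational basis \(\widehat\Psi\), which is orthonormal in \(\ip{\cdot}{\cdot}_n\). Write \(v=\sum_j c_j\widehat\psi_j\), so that \(\|v\|_n=\|c\|_2\). Allowing complex \(c\) after complexification changes neither side, since the matrices are real symmetric. Because \(\ip{\cdot}{\cdot}_n\) is an inner product on \(V+\widehat TV\), the projection \(\widehat P_V\) is a genuine orthogonal projection there. Pythagoras then gives
\[
    \|(I-\widehat P_V)\widehat Tv\|_n^2=\|\widehat Tv\|_n^2-\|\widehat P_V\widehat Tv\|_n^2 .
\]
Since \(\widehat Tv=\sum_j c_j\widehat T\widehat\psi_j\), the first term equals \(c^*\widehat Lc\). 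In the orthonormal basis, \(\widehat P_V\widehat Tv\) has coefficients \(\ip{\widehat\psi_i}{\widehat Tv}_n=(\widehat Kc)_i\), so the second term equals \(\|\widehat Kc\|_2^2=c^*\widehat K^\top\widehat Kc\). The quotient is therefore the Rayleigh quotient of the Hermitian matrix \(\widehat L-\widehat K^\top\widehat K\), and its supremum over \(c\ne0\) is \(\lambda_{\max}\) by Courant--Fischer.

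Neither step is a real obstacle. The only care needed is bookkeeping: the action of \(\widehat T\) on \(V\) must be taken as the linear map defined on the chosen basis, and the empirical inner product must be nondegenerate on \(V+\widehat TV\) so that Pythagoras applies and \(\|\cdot\|_n\) separates points.
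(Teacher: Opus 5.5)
Your proof is correct and is the paper's argument written in coordinates. The paper gets the first bound from the synthesis map \(S\) and the Hilbert--Schmidt estimate \(\Delta(V)\le\|\mathcal E S\|_{\rm HS}\|G_\Psi^{-1/2}\|_2\), which is your triangle-inequality and Cauchy--Schwarz step. It gets the second identity from \(\widehat P_V=\widehat S\widehat S^*\), which is your Pythagoras and Rayleigh-quotient computation; your caveat that, for a nonlinear estimator, \(\Psi\) must be the basis defining \(\widehat T|_V\) matches the paper's own remark.
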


\begin{proof}
Let \(S\colon\CC^m\to V\), \(Sc\coloneqq\sum_jc_j\psi_j\), so that
\(S^*S=G_\Psi\) and \(SG_\Psi^{-1/2}\) is unitary onto \(V\), and let
\(\mathcal E\coloneqq\widehat T|_V-T|_V\). Then, with
\(\|\cdot\|_{\rm HS}\) the Hilbert--Schmidt norm,
\(\Delta(V)=\|\mathcal E\|\le\|\mathcal ESG_\Psi^{-1/2}\|_{\rm HS}
\le\|\mathcal ES\|_{\rm HS}\,\|G_\Psi^{-1/2}\|_2\) and
\(\|\mathcal ES\|_{\rm HS}^2=\sum_j\|\widehat T\psi_j-T\psi_j\|^2\), which
gives \eqref{eq:surrogate-approximation-error}. For
\eqref{eq:surrogate-defect}, let \(\widehat S\) be the synthesis map of the
computational basis \(\widehat\Psi\), and take adjoints with respect to
\(\ip{\cdot}{\cdot}_n\). Then \(\widehat S\) is unitary from \(\CC^m\) onto
\((V,\ip{\cdot}{\cdot}_n)\), \(\widehat P_V=\widehat S\widehat S^*\),
\(\widehat K=\widehat S^*\widehat T\widehat S\), and
\(\widehat L=(\widehat T\widehat S)^*\widehat T\widehat S\), as in the
residual identity of residual DMD \cite{colbrook2024rigorous}. Hence the
left-hand side of \eqref{eq:surrogate-defect} equals
\(\lambda_{\max}\bigl((\widehat T\widehat S)^*(I-\widehat P_V)\widehat T\widehat S\bigr)
=\lambda_{\max}(\widehat L-\widehat K^\top\widehat K)\).
\end{proof}

\section{Sampling error and data reuse}
\label{app:empirical-concentration}

\begin{lemma}\label{lem:sampling}
Let \(V\subset W\) be finite-dimensional subspaces of \(\Fn\) with
\(V\ne\{0\}\), let \(\|\cdot\|\) be a norm on \(W\), and let
\(\eta\coloneqq\eta(W)<1\). For every \(u\in W\),
\begin{align}
    (1-\eta)\|u\|^2
    &\le\|u\|_n^2
    \le(1+\eta)\|u\|^2,
    \label{eq:sampling-norm}\\
    \|(\widehat P_V-P_V)u\|
    &\le\frac{\eta}{1-\eta}\,\|(I-P_V)u\|,
    \label{eq:sampling-projection}\\
    (1-\eta)\|(I-P_V)u\|^2
    &\le\|(I-\widehat P_V)u\|_n^2
    \le(1+\eta)\|(I-P_V)u\|^2.
    \label{eq:sampling-residual}
\end{align}
\end{lemma}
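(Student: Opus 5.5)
The three estimates rest on the definition \eqref{eq:sampling-error} of \(\eta(W)\). Setting \(u=v\) in the supremum gives \(|\|u\|_n^2-\|u\|^2|\le\eta\|u\|^2\) for every \(u\in W\), which is \eqref{eq:sampling-norm}. In particular, since \(\eta<1\), \(\ip{\cdot}{\cdot}_n\) is an inner product on \(W\). Hence \(\widehat P_V\) is defined on \(W\) and maps \(W\) into \(V\subset W\). All the vectors that appear below therefore lie in \(W\), so the bound \(|\ip{a}{b}_n-\ip{a}{b}|\le\eta\|a\|\|b\|\) may be applied to them freely.

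For \eqref{eq:sampling-projection}, fix \(u\in W\) and write \(r\coloneqq(I-P_V)u\) and \(w\coloneqq(\widehat P_V-P_V)u\in V\). The key observation is that \(\widehat P_V\) is exact on \(V\), so \(\widehat P_VP_Vu=P_Vu\). It follows that \(w=\widehat P_Vr\), and \(w\) is characterized by \(\ip{v}{w}_n=\ip{v}{r}_n\) for all \(v\in V\). Take \(v=w\). The left side is \(\|w\|_n^2\ge(1-\eta)\|w\|^2\). On the right, \(\ip{w}{r}=0\) because \(r\perp V\), so \(|\ip{w}{r}_n|=|\ip{w}{r}_n-\ip{w}{r}|\le\eta\|w\|\|r\|\). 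Dividing by \(\|w\|\) gives \(\|w\|\le\frac{\eta}{1-\eta}\|r\|\). This is the step where the constant must come out as stated. The naive route, which bounds \(\|w\|_n\) through \(\|r\|_n\), would only yield \(\kappa(\eta)\eta\)-type constants. Exploiting the orthogonality \(\ip{w}{r}=0\) is what gives the sharp factor.

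For \eqref{eq:sampling-residual}, use the variational characterizations of the two projections. The residual \((I-\widehat P_V)u\) minimizes \(\|u-v\|_n\) over \(v\in V\), and \((I-P_V)u\) minimizes \(\|u-v\|\) over \(v\in V\). For the upper bound, apply \eqref{eq:sampling-norm} to \(u-P_Vu\in W\):
\[
\|(I-\widehat P_V)u\|_n^2\le\|u-P_Vu\|_n^2\le(1+\eta)\|(I-P_V)u\|^2 .
\]
For the lower bound, apply \eqref{eq:sampling-norm} to \(u-\widehat P_Vu\in W\):
\[
\|(I-\widehat P_V)u\|_n^2\ge(1-\eta)\|u-\widehat P_Vu\|^2\ge(1-\eta)\|(I-P_V)u\|^2,
\]
where the last inequality is the \(L^2(\rho)\) best-approximation property of \(P_V\). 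None of these steps is delicate. The only point requiring care is the one already noted, namely using \(\ip{w}{r}=0\) in the projection bound.
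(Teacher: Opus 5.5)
Your proof is correct and follows the paper's argument essentially step for step. The norm equivalence comes straight from the definition of \(\eta(W)\); the paper reads it off the Gram matrix \(\widehat G_W\), which is the same thing. The projection bound tests the \(\ip{\cdot}{\cdot}_n\)-orthogonality that defines \(\widehat P_V\) against \((\widehat P_V-P_V)u\) itself and uses \(\ip{(\widehat P_V-P_V)u}{(I-P_V)u}=0\); the paper works with \(P_Vu-\widehat P_Vu\), i.e.\ the same vector up to sign. The residual bounds follow from the two best-approximation characterizations, exactly as in the paper.

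One small correction to your aside: the naive route \(\|w\|_n\le\|r\|_n\) gives \(\|w\|\le\kappa(\eta)\|r\|\), with no factor \(\eta\) at all, not a \(\kappa(\eta)\eta\)-type constant. This only reinforces your point that the orthogonality \(\ip{w}{r}=0\) is what makes the constant vanish as \(\eta\to0\).
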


\begin{proof}
Let \(\chi^W\) be a \(\rho\)-orthonormal basis of \(W\) and
\(u=\sum_ia_i\chi^W_i\). Then \(\|u\|_n^2-\|u\|^2=a^*(\widehat G_W-I)a\),
which gives \eqref{eq:sampling-norm}; in the same way,
\(|\ip{u_1}{u_2}_n-\ip{u_1}{u_2}|\le\eta\|u_1\|\|u_2\|\) for
\(u_1,u_2\in W\), and \(\ip{\cdot}{\cdot}_n\) is an inner product on
\(W\), so that \(\widehat P_V\) is defined there. For
\eqref{eq:sampling-projection}, let \(v\coloneqq P_Vu-\widehat P_Vu\in
V\). Since \(v\) is \(\ip{\cdot}{\cdot}_n\)-orthogonal to
\(u-\widehat P_Vu\) and orthogonal to \(u-P_Vu\), both of which lie in
\(W\),
\[
    \|v\|_n^2
    =-\ip{v}{u-P_Vu}_n
    =-\bigl(\ip{v}{u-P_Vu}_n-\ip{v}{u-P_Vu}\bigr)
    \le\eta\,\|v\|\,\|u-P_Vu\|,
\]
and \eqref{eq:sampling-norm} gives \((1-\eta)\|v\|^2\le\|v\|_n^2\). For
\eqref{eq:sampling-residual}, note that
\(\|(I-\widehat P_V)u\|_n^2=\min_{w\in V}\|u-w\|_n^2\) and
\(\|(I-P_V)u\|^2=\min_{w\in V}\|u-w\|^2\), and apply
\eqref{eq:sampling-norm} to \(u-w\).
\end{proof}

\begin{lemma}\label{lem:objective-equivalence}
Let \(V=V_\theta\) with \(\eta\coloneqq\eta_V<1\). Then
\begin{equation}\label{eq:objective-equivalence}
    \cJ(V)^{1/2}
    \le
    \kappa(\eta)\,\widehat\cJ(\theta)^{1/2}
    +\Delta(V),
    \qquad
    \widehat\cJ(\theta)^{1/2}
    \le
    \kappa(\eta)\bigl(\cJ(V)^{1/2}+\Delta(V)\bigr).
\end{equation}
\end{lemma}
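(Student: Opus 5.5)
We would compare the two objectives term by term, using \cref{lem:sampling} with \(W=W_V=V+\widehat TV+\spanop\cG\), on which \(\|\cdot\|\) and \(\|\cdot\|_n\) are equivalent with constants \(1\pm\eta\). Define the exact-inner-product defect of the estimator, \(\widetilde\cI(V)\coloneqq\|(I-P_V)\widehat T|_V\|_{\cB(V,\cH)}\), and the empirical one, \(\widehat\cI(V)\coloneqq\sup_{v\in V\setminus\{0\}}\|(I-\widehat P_V)\widehat Tv\|_n/\|v\|_n\). By \eqref{eq:surrogate-defect} of \cref{lem:basis-linear-surrogate} (applicable since \(\ip{\cdot}{\cdot}_n\) is an inner product on \(W_V\supset V+\widehat TV\) when \(\eta<1\)), \(\widehat\cI(V)^2\) is the first term of \(\widehat\cJ(\theta)\). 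Also write \(\widehat\cR(V)\coloneqq\max_j\|(I-\widehat P_V)g_j\|_n\), so that \(\widehat\cJ(\theta)=\widehat\cI(V)^2+\alpha\widehat\cR(V)^2\).

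First we relate \(\cI\) and \(\widetilde\cI\). Since \(I-P_V\) is a contraction, \(|\cI(V)-\widetilde\cI(V)|\le\|(I-P_V)(T-\widehat T)|_V\|\le\Delta(V)\).

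Second we relate \(\widetilde\cI\) and \(\widehat\cI\). For \(v\in V\), \(\widehat Tv\in W\), so \eqref{eq:sampling-residual} gives \((1-\eta)\|(I-P_V)\widehat Tv\|^2\le\|(I-\widehat P_V)\widehat Tv\|_n^2\le(1+\eta)\|(I-P_V)\widehat Tv\|^2\). Combining this with \eqref{eq:sampling-norm} for \(v\) gives \(\widehat\cI(V)\le\kappa(\eta)\widetilde\cI(V)\) and \(\widetilde\cI(V)\le\kappa(\eta)\widehat\cI(V)\). In the same way, \eqref{eq:sampling-residual} applied to \(u=g_j\in W\) yields \((1-\eta)\cR(V)^2\le\widehat\cR(V)^2\le(1+\eta)\cR(V)^2\), hence \(\cR\le\kappa\widehat\cR\) and \(\widehat\cR\le\kappa\cR\) (using \(\kappa\ge1\), \((1-\eta)^{-1/2}\le\kappa\), \((1+\eta)^{1/2}\le\kappa\)).

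Finally we assemble the bounds in the Euclidean norm on \(\mathbb R^2\), writing \(\cJ^{1/2}=\|(\cI,\alpha^{1/2}\cR)\|_2\). For the first inequality, \(\cJ(V)^{1/2}\le\|(\widetilde\cI,\alpha^{1/2}\cR)\|_2+\|(\Delta,0)\|_2\le\kappa\|(\widehat\cI,\alpha^{1/2}\widehat\cR)\|_2+\Delta(V)=\kappa\widehat\cJ^{1/2}+\Delta\). For the second, \(\widehat\cJ^{1/2}\le\kappa\|(\widetilde\cI,\alpha^{1/2}\cR)\|_2\le\kappa(\|(\cI,\alpha^{1/2}\cR)\|_2+\Delta)\). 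The only delicate point is to apply the triangle inequality to the vector rather than to the squares, so that \(\Delta\) enters additively without cross terms. One must also check that the nonlinear-estimator convention of \cref{rem:nonlinear-estimator} is consistent, that is, that \(\widehat T|_V\) is the same linear map in \(\widehat\cI\) and in \(\Delta(V)\), and that the supremum in \(\widehat\cI\) is unaffected by complexification.
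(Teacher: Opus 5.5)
Your proposal is correct and follows the paper's proof essentially step for step. The paper also introduces the estimator's exact defect \(\|(I-P_V)\widehat T|_V\|\), sandwiches the computed terms using \eqref{eq:surrogate-defect}, \eqref{eq:sampling-residual}, and \eqref{eq:sampling-norm} to obtain \(\kappa(\eta)^{-2}\cJ_{\widehat T}(V)\le\widehat\cJ(\theta)\le\kappa(\eta)^{2}\cJ_{\widehat T}(V)\), and then passes from \(\widehat T\) to \(T\) using the fact that \(t\mapsto(t^2+\alpha\cR(V)^2)^{1/2}\) is \(1\)-Lipschitz, which is exactly your triangle inequality in \(\mathbb R^2\).
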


\begin{proof}
Let
\(\cI_{\widehat T}(V)\coloneqq\|(I-P_V)\widehat T|_V\|_{\cB(V,\cH)}\) and
\(\cJ_{\widehat T}(V)\coloneqq\cI_{\widehat T}(V)^2+\alpha\cR(V)^2\). Since
\(\eta<1\), \eqref{eq:sampling-norm} shows that \(\ip{\cdot}{\cdot}_n\) is
an inner product on \(W_V\supset V+\widehat TV\). By
\eqref{eq:surrogate-defect}, the first term of \(\widehat\cJ(\theta)\)
therefore equals
\(\sup_{v\ne0}\|(I-\widehat P_V)\widehat T v\|_n^2/\|v\|_n^2\). By
\eqref{eq:sampling-residual} and \eqref{eq:sampling-norm}, the numerator lies between
\((1\mp\eta)\|(I-P_V)\widehat T v\|^2\) and the denominator between
\((1\mp\eta)\|v\|^2\), so that this term lies between
\(\kappa(\eta)^{-2}\cI_{\widehat T}(V)^2\) and
\(\kappa(\eta)^2\cI_{\widehat T}(V)^2\). By
\eqref{eq:sampling-residual}, also
\((1-\eta)\cR(V)^2\le\max_j\|(I-\widehat P_V)g_j\|_n^2\le(1+\eta)\cR(V)^2\).
Hence
\(\kappa(\eta)^{-2}\cJ_{\widehat T}(V)\le\widehat\cJ(\theta)
\le\kappa(\eta)^2\cJ_{\widehat T}(V)\). Finally,
\(|\cI_{\widehat T}(V)-\cI(V)|\le\|(I-P_V)(\widehat T|_V-T|_V)\|
\le\Delta(V)\), and since \(t\mapsto(t^2+\alpha\cR(V)^2)^{1/2}\) is
\(1\)-Lipschitz, \(|\cJ_{\widehat T}(V)^{1/2}-\cJ(V)^{1/2}|\le\Delta(V)\).
Combining these estimates gives \eqref{eq:objective-equivalence}.
\end{proof}

\paragraph{Concentration}
For a fixed space \(W\), independent sample points with law \(\rho\), and
the weights \(w_k=1/n\), the matrix \(\widehat G_W-I\) is a sum of \(n\)
independent centered matrices. The matrix Bernstein inequality
\cite[Theorem~1.4]{tropp2012user} then gives, for \(\nu\in(0,1)\), with
probability at least \(1-\nu\),
\[
    \eta(W)
    \le
    \Bigl(\frac{2c_W\log(2\dim W/\nu)}{n}\Bigr)^{1/2}
    +\frac{2c_W\log(2\dim W/\nu)}{3n},
\]
where \(c_W\) bounds \(\|\chi^W(x)\|_2^2\) for \(\rho\)-almost every \(x\);
bounds of this type for the empirical Gram matrix of a least-squares space
are given in \cite{cohen2013stability,cohen2017optimal}. For stationary
ergodic trajectories and bounded \(\chi^W\), Bernstein inequalities for
Markov chains with a spectral gap, or a pseudo-spectral gap in the
nonreversible case \cite{paulin2015concentration}, give
\(\eta(W)=O(n^{-1/2})\) with high probability, with constants depending on
\(W\) and on the gap.

\paragraph{Uniform control}
The quantities \(\eta_\Theta\) and \(\Delta_\Theta\) of \cref{thm:main}
reduce to quantities for a fixed space when the estimator is linear and all
candidate spaces lie in a fixed finite-dimensional space \(U\subset\Fn\),
such as the polynomials of a given degree, on which \(\|\cdot\|\) is a norm.
Then \(W_{V_\theta}\subset W_U\coloneqq U+\widehat TU+\spanop\cG\) for every
\(\theta\in\Theta\), and since \(\eta\) and \(\Delta\) are monotone with
respect to inclusion, \(\eta_\Theta\le\eta(W_U)\) and
\(\Delta_\Theta\le\Delta(U)\). The same statements apply to weighted nodes
(\cref{sec:computational-realization}), for which \(\eta\) is the quadrature
error of the Gram matrix of \(W_V\).

\section{Proofs of the main results}\label{app:proofs}

\subsection{Proof of \texorpdfstring{\cref{prop:one-step}}{Proposition 4.1}}
\label{app:proof-one-step}
\begin{proof}
For \(v\in V\),
\[
    \widehat\cK v-P_VTv
    =P_V(\widehat T-T)v+(\widehat P_V-P_V)\widehat T v .
\]
The first term has norm at most \(\Delta(V)\|v\|\). Since
\(\widehat T v\) belongs to \(W_V\)
and
\(\|(I-P_V)\widehat T v\|\le\|(I-P_V)Tv\|+\|(\widehat T-T)v\|
\le(\cI(V)+\Delta(V))\|v\|\), \eqref{eq:sampling-projection} bounds the second
term by \(\eta(1-\eta)^{-1}(\cI(V)+\Delta(V))\|v\|\). Adding the two
bounds gives \eqref{eq:compression-error}, and
\((T-\widehat\cK)v=(I-P_V)Tv+(P_VT-\widehat\cK)v\) gives
\eqref{eq:one-step-error}, because
\(\cI(V)+(\Delta(V)+\eta\cI(V))/(1-\eta)=(\cI(V)+\Delta(V))/(1-\eta)\).
\end{proof}

\subsection{Proof of \texorpdfstring{\cref{thm:main}}{Theorem 4.2}}
\label{app:proof-main}
\begin{proof}
Let \(\theta\in\Theta\) and \(\kappa\coloneqq\kappa(\eta_\Theta)\ge1\). Since
\(\eta_{\widehat V}\le\eta_\Theta\) and
\(\Delta(\widehat V)\le\Delta_\Theta\), the first inequality in
\eqref{eq:objective-equivalence} of \cref{lem:objective-equivalence} for
\(\theta=\widehat\theta\) gives
\(\cJ(\widehat V)^{1/2}\le\kappa\,\widehat\cJ(\widehat\theta)^{1/2}
+\Delta_\Theta\). By
\eqref{eq:near-minimizer} and the second inequality in
\eqref{eq:objective-equivalence},
\(\widehat\cJ(\widehat\theta)^{1/2}\le\widehat\cJ(\theta)^{1/2}+\zeta^{1/2}
\le\kappa(\cJ(V_\theta)^{1/2}+\Delta_\Theta)+\zeta^{1/2}\). Combining the
two bounds gives
\(\cJ(\widehat V)^{1/2}\le\kappa^2(\cJ(V_\theta)^{1/2}+\Delta_\Theta)
+\kappa\zeta^{1/2}+\Delta_\Theta\), which implies \eqref{eq:main-bound}.
\end{proof}

\subsection{Proof of \texorpdfstring{\cref{thm:galerkin-spectral}}{Theorem 4.3}}
\label{app:proof-galerkin-spectral}
\begin{proof}
Let \(R\coloneqq P_V\iota\colon E_\star\to V\). For \(u\in E_\star\),
\(\|Ru-u\|=\|(I-P_V)u\|\le d\|u\|\), and
\(R^*R=I-\iota^*(I-P_V)\iota\) satisfies \((1-d^2)I\le R^*R\le I\) because
\(\|(I-P_V)\iota\|=d\). Hence \(R\) is invertible, its singular values lie
in \([(1-d^2)^{1/2},1]\), and \(\cQ\coloneqq R(R^*R)^{-1/2}\) is an
isometric isomorphism of \(E_\star\) onto \(V\) with
\[
    \|\cQ-\iota\|
    \le\|R((R^*R)^{-1/2}-I)\|+\|R-\iota\|
    \le1-(1-d^2)^{1/2}+d
    \le d^2+d
    \le2d ,
\]
since \(\|R((R^*R)^{-1/2}-I)\|=\|I-(R^*R)^{1/2}\|\). Regarding \(\cQ\) as
a map into \(\cH\), we have \(\cQ^*P_V=\cQ^*\), and
\(T|_{E_\star}=\iota^*T\iota\) by invariance. Therefore
\[
    \cQ^*\widehat\cK\cQ-T|_{E_\star}
    =\cQ^*\bigl(\widehat\cK-P_VT|_V\bigr)\cQ
    +\bigl(\cQ^*T\cQ-\iota^*T\iota\bigr).
\]
By \eqref{eq:compression-error} and the upper bound \(\cI(V)\le2M_Td\) of
\eqref{eq:local-identification}, which holds for every \(V\), the first
term is bounded by
\(\|\widehat\cK-P_VT|_V\|\le(\Delta(V)+2\eta M_Td)/(1-\eta)\), and the
second by \(2M_T\|\cQ-\iota\|\le4M_Td\). Adding the two bounds and using
\(2\eta+4(1-\eta)\le4\) gives \eqref{eq:aligned-matrix-bound}.

For the convergence statement, recall that the eigenvalues of \(\widehat K\)
are those of \(\widehat\cK\), and hence those of
\(\cQ^*\widehat\cK\cQ=\cQ^{-1}\widehat\cK\cQ\). For large \(k\),
\(\dist(V_k,E_\star)<1\), and the first part gives isometries \(\cQ_k\) with
\(\|\cQ_k-\iota\|\to0\) and, since \(\sup_k\eta_{V_k}<1\),
\(A_k\coloneqq\cQ_k^*\widehat\cK_k\cQ_k\to T|_{E_\star}\) in
\(\cB(E_\star)\). Since \(E_\star\) is finite-dimensional, the eigenvalues of
\(A_k\) converge to those of \(T|_{E_\star}\) with algebraic multiplicity,
and the Riesz projections of \(A_k\) for the eigenvalues that converge to a
given part of \(\spec(T|_{E_\star})\) converge in norm to the Riesz
projection of \(T|_{E_\star}\) for that part
\cite[Chap.~II, \S5.1]{kato1966perturbation}, \cite{chatelin2011spectral};
hence their ranges \(F_k\) converge in \(\dist\) to its range \(F\). The
corresponding spectral subspaces of \(\widehat\cK_k\) are \(\cQ_kF_k\), and
\(\|P_{\cQ_kF_k}-P_{F_k}\|\le2\|\cQ_k-\iota\|\to0\) because \(\cQ_k\) is an
isometry, so that \(\dist(\cQ_kF_k,F)\to0\). In the rank-one case,
normalized eigenvectors converge up to unimodular factors.
\end{proof}

\subsection{Proof of \texorpdfstring{\cref{thm:quantitative-identification}}{Theorem 4.4}}
\label{app:proof-local-identification}
\begin{proof}
Let \(d\coloneqq\dist(V,E_\star)\). For the upper bound, write
\(TP_V=TP_\star P_V+T(I-P_\star)P_V\) and use \(TP_\star=P_\star TP_\star\),
which gives
\[
    \cI(V)
    =\|(I-P_V)TP_V\|
    \le\|(I-P_V)P_\star\|\,M_T+M_T\,\|(I-P_\star)P_V\|
    =2M_Td .
\]
For the lower bound, \(d\le r_\star/\sqrt2<1\), so that \(P_\star\) maps
\(V\) bijectively onto \(E_\star\). Hence \(V=\{u+Zu:u\in E_\star\}\) for
some \(Z\in\cB(E_\star,E_\star^\perp)\), and
\(d=\|(I-P_\star)P_V\|=\|Z\|(1+\|Z\|^2)^{-1/2}\), because
\(\|(I-P_\star)v\|^2/\|v\|^2=\|Zu\|^2/(\|u\|^2+\|Zu\|^2)\) for
\(v=u+Zu\), so that
\(t\coloneqq\|Z\|=d(1-d^2)^{-1/2}\), the tangent of the largest principal
angle, satisfies \(d\le t\le\sqrt2\,d\le r_\star\le1\). For \(v=u+Zu\), the
invariance of \(E_\star\) gives
\[
    Tv=u'+Zu'+\Ric(Z)u,
    \qquad
    u'\coloneqq(A_\star+B_\star Z)u\in E_\star,
    \qquad
    \Ric(Z)\coloneqq D_\star Z-ZA_\star-ZB_\star Z,
\]
so that \((I-P_V)Tv=(I-P_V)\Ric(Z)u\) with \(\Ric(Z)u\in E_\star^\perp\).
Moreover, \(\|v\|\le(1+t^2)^{1/2}\|u\|\), and for \(w\in E_\star^\perp\)
and \(\tilde u\in E_\star\),
\(\|w\|\le\|w-Z\tilde u\|+t\|\tilde u\|
\le(1+t^2)^{1/2}\|w-\tilde u-Z\tilde u\|\), so that
\(\|(I-P_V)w\|\ge(1+t^2)^{-1/2}\|w\|\). Therefore
\(\cI(V)\ge\|\Ric(Z)\|/(1+t^2)\). Since \(\|D_\star Z-ZA_\star\|\ge\delta_\star t\)
and \(\|ZB_\star Z\|\le\|B_\star\|t^2\le\delta_\star t/2\) for
\(t\le r_\star\), we obtain \(\|\Ric(Z)\|\ge\delta_\star t/2\), and
\(1+t^2\le2\) gives \(\cI(V)\ge\delta_\star t/4\ge\delta_\star d/4\).
\end{proof}

\subsection{Proof of \texorpdfstring{\cref{thm:forecast}}{Theorem 4.5}}\label{app:proof-forecast}
\begin{proof}
Write \(P\coloneqq P_V\) and \(\widehat P\coloneqq\widehat P_V\). Then
\[
    T^Nf-f^{(N)}
    =T^N(I-P)f+\sum_{k=0}^{N-1}T^{N-1-k}(T-\widehat\cK)\widehat\cK^kPf
    +\widehat\cK^N(P-\widehat P)f .
\]
The first term is bounded by \(M_T^N\|(I-P)f\|\). Since
\(\widehat\cK^kPf\in V\), \eqref{eq:one-step-error} bounds the \(k\)th
summand by \(M_T^{N-1-k}e(V)\,\|\widehat\cK\|^k\|Pf\|\). Since
\(f\in W_V\), \eqref{eq:sampling-projection} bounds the last term by
\(\|\widehat\cK\|^N\eta(1-\eta)^{-1}\|(I-P)f\|\). Since
\(M_T^{N-1-k}\|\widehat\cK\|^k\le M^{N-1}\) and
\(M_T^N+\|\widehat\cK\|^N\eta/(1-\eta)\le M^N/(1-\eta)\), adding the three
bounds gives \eqref{eq:matrix-power-forecast-bound}.

For the selected space \(\widehat V\) and \(f\in\cG\), \cref{thm:main} gives
\(\|(I-P_{\widehat V})f\|\le\cR(\widehat V)
\le(\cJ(\widehat V)/\alpha)^{1/2}\le\varepsilon_\Theta/\alpha^{1/2}\) and
\(\cI(\widehat V)\le\cJ(\widehat V)^{1/2}\le\varepsilon_\Theta\), so that
\(e(\widehat V)\le(\varepsilon_\Theta+\Delta_\Theta)/(1-\eta_\Theta)\).
Moreover, \(\eta_{\widehat V}\le\eta_\Theta\) and
\(\|P_{\widehat V}f\|\le\|f\|\).
Inserting these bounds in \eqref{eq:matrix-power-forecast-bound} gives
\eqref{eq:selected-forecast-bound}.
\end{proof}

\section{Supplements to the error analysis}
\label{app:supplements}

\subsection{Identification of the target subspace}
\label{app:identification}

\paragraph{Separation}
If \(E_\star\) is the spectral subspace of an isolated cluster
\(\Lambda_\star\) (\cref{sec:spec_pert_theory}), then \(\delta_\star>0\).
Indeed, \(\spec(A_\star)=\Lambda_\star\), and
\(\spec(D_\star)=\spec(T)\setminus\Lambda_\star\) because the restriction
of \(T\) to \(\Ran(I-\Pi)\) has this spectrum
\cite[Chap.~III, \S6.4]{kato1966perturbation} and is similar to \(D_\star\)
through \(I-P_\star\); the map \(Z\mapsto D_\star Z-ZA_\star\) is therefore
invertible by Rosenblum's theorem
\cite{rosenblum1956operator,bhatia1997how}. If \(T\) is self-adjoint and
\(\Lambda_\star\) consists of the \(m\) largest eigenvalues, then
\(B_\star=0\), \(r_\star=1\), and \(\delta_\star\) is the distance between
\(\Lambda_\star\) and the rest of the spectrum \cite{bhatia1997how}; in
general, \(\delta_\star\) also depends on the departure of \(T\) from
normality.

\paragraph{Resolution}
Let
\[
    s_\star
    \coloneqq
    \min\Bigl\{\Bigl(\sum_{j=1}^q\ip{u}{g_j}^2\Bigr)^{1/2}:
    u\in E_\star,\ \|u\|=1\Bigr\};
\]
it measures how well the reconstruction observables resolve \(E_\star\), and
the condition \(s_\star>0\) means that their projections onto \(E_\star\)
span \(E_\star\).

\begin{lemma}[Localization by the reconstruction term]\label{lem:localization}
For every \(m\)-dimensional subspace \(V\subset\cH\),
\begin{equation}\label{eq:localization}
    s_\star\,\dist(V,E_\star)
    \le
    \sqrt q\,\bigl(\cR(V)+\cR(E_\star)\bigr).
\end{equation}
In particular, if \(\cG\subset E_\star\) and \(s_\star>0\), then
\(\dist(V,E_\star)\le(\sqrt q/s_\star)(\cJ(V)/\alpha)^{1/2}\).
\end{lemma}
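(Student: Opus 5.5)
Since \(V\) and \(E_\star\) both have dimension \(m\), we use \(\dist(V,E_\star)=\|(I-P_V)P_\star\|\) from \cref{sec:spec_pert_theory}. The plan is to bound \(\|(I-P_V)u\|\) for a unit vector \(u\in E_\star\) by testing it against the reconstruction observables.

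\textbf{Step 1.} Fix \(u\in E_\star\) with \(\|u\|=1\). For each \(j\), decompose \(g_j=P_Vg_j+(I-P_V)g_j\) and \(g_j=P_\star g_j+(I-P_\star)g_j\). Since \(\ip{u}{g_j}=\ip{u}{P_\star g_j}\), the splitting
\[
    \ip{u}{g_j}=\ip{u}{P_\star g_j-P_Vg_j}+\ip{u}{P_Vg_j}
\]
isolates the part of \(g_j\) that \(V\) and \(E_\star\) see differently.

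\textbf{Step 2.} Bound the first term. Because \(P_\star g_j-P_Vg_j=(I-P_V)g_j-(I-P_\star)g_j\), we have
\[
    |\ip{u}{P_\star g_j-P_Vg_j}|\le\cR(V)+\cR(E_\star).
\]

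\textbf{Step 3.} Bound the second term. Since \(P_Vg_j\in V\), we get \(\ip{u}{P_Vg_j}=\ip{P_Vu}{g_j}\). This is useless as it stands, so I would symmetrize instead: write \(P_Vu=u-(I-P_V)u\) and set \(w\coloneqq(I-P_V)u\). It is cleaner to work with \(w\) from the start, choosing \(u\) to realize the distance, so that \(\|w\|=d\coloneqq\dist(V,E_\star)\). Then
\[
    \ip{w}{g_j}=\ip{w}{(I-P_V)g_j},
\]
so \(|\ip{w}{g_j}|\le d\,\cR(V)\). This alone does not give \(s_\star d\) on the left, however, because \(w\) need not lie in \(E_\star\).

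\textbf{Step 3, revised.} The better route is to take \(u\in E_\star\) with \(\|P_Vu\|\) small relative to \(d\)? That is not available directly. Instead, use the reverse distance \(d=\|(I-P_\star)P_V\|\). Take a unit \(v\in V\) and set \(u\coloneqq P_\star v\), so that \(\|v-u\|\le d\) and \(\|u\|\ge(1-d^2)^{1/2}\).

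\emph{The main obstacle is producing a unit vector of \(E_\star\) that is (nearly) orthogonal to \(V\).} I would use the equal-dimension principal-angle structure. Choose principal vectors: a unit \(u\in E_\star\) and a unit \(v\in V\) with \(\ip{u}{v}=\cos\phi\), where \(\sin\phi=d\) is the largest principal angle. Then \(u-\cos\phi\,v=(I-P_V)u\) has norm \(d\). With this choice, for each \(j\),
\[
    \ip{u}{g_j}=\ip{u}{P_\star g_j},
\]
and \(u\) is orthogonal to \(P_V\) of the remaining principal directions.

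\textbf{Step 4.} Apply this to the principal pair. Then \(|\ip{u}{g_j}|\) and \(|\ip{v}{g_j}|\) differ by at most the two reconstruction errors, while \(u-\cos\phi\,v\perp V\). Writing \(\ip{u}{g_j}=\ip{u-\cos\phi\,v}{g_j}+\cos\phi\ip{v}{g_j}\), the first piece equals \(\ip{(I-P_V)u}{(I-P_V)g_j}\), which is at most \(\cR(V)\). This is still off from \(s_\star d\), so here I would be prepared to adjust. If the exact constant cannot be reached directly, I expect the intended argument to be an operator one:
\[
    \|(P_\star-P_V)P_\star\|\ \text{is tested on}\ G\coloneqq\sum_j g_j\ip{g_j}{\cdot},
\]
using the bound \(\|P_\star G P_\star\|\ge s_\star^2\) and
\[
    \bigl\|(P_\star-P_V)g_j\bigr\|\le\cR(V)+\cR(E_\star).
\]
From these one gets \(s_\star\,d\le\bigl(\sum_j\|(P_\star-P_V)g_j\|^2\bigr)^{1/2}\le\sqrt q\,(\cR(V)+\cR(E_\star))\). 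The key inequality to verify is \(\|(P_\star-P_V)h\|\ge\) (component of \(h\) along a worst principal direction) times \(d\), applied to \(h=g_j\) and summed over \(j\) via the definition of \(s_\star\).

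\textbf{Step 5.} For the final claim, \(\cG\subset E_\star\) gives \(\cR(E_\star)=0\), and \(\cR(V)\le(\cJ(V)/\alpha)^{1/2}\) by the definition of \(\cJ\); dividing by \(s_\star>0\) yields the stated bound on \(\dist(V,E_\star)\).
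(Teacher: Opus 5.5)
\paragraph{Assessment}
Your final chain $s_\star d\le\bigl(\sum_j\|(P_\star-P_V)g_j\|^2\bigr)^{1/2}\le\sqrt q\,(\cR(V)+\cR(E_\star))$ is correct, but its first inequality is the whole content of the lemma, and you leave it as ``the key inequality to verify.'' Everything before it consists of attempts that you abandon yourself. The ingredient you cite for it, $\|P_\star GP_\star\|\ge s_\star^2$, would not supply it. The operator norm is the \emph{largest} eigenvalue of $P_\star GP_\star$ on $E_\star$. What you need is the lower bound $\ip{u}{Gu}=\sum_j\ip{u}{g_j}^2\ge s_\star^2$ at the particular unit vector $u$ that realizes the distance, which is the \emph{minimum} in the definition of $s_\star$. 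As written, therefore, the proposal does not prove the lemma.

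\paragraph{The missing step}
The missing idea is one line away from your Step~3. There you had a unit vector $u\in E_\star$ with $w\coloneqq(I-P_V)u$, $\|w\|=d$ and $w\perp V$, and you stopped because $w\notin E_\star$. Project it onto $E_\star$. Then $\ip{P_\star w}{u}=\ip{w}{u}=\|(I-P_V)u\|^2=d^2$, so $\|P_\star w\|\ge d^2$. Since $w\perp V$,
\[
    \ip{P_\star w}{g_j}=\ip{w}{(I-P_V)g_j}-\ip{w}{(I-P_\star)g_j},
    \qquad\text{so}\qquad
    |\ip{P_\star w}{g_j}|\le d\,\bigl(\cR(V)+\cR(E_\star)\bigr).
\]
Applying the definition of $s_\star$ to $P_\star w\in E_\star$ gives $s_\star d^2\le\sqrt q\,d\,(\cR(V)+\cR(E_\star))$, and dividing by $d>0$ finishes the proof. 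This is the paper's argument, with $w$ normalized to norm one, and it needs no principal-vector pairing.

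The same observation also proves your key inequality. Since $u$ maximizes $\|(I-P_V)u\|$ over the unit vectors of $E_\star$, it is an eigenvector of $P_\star(I-P_V)P_\star|_{E_\star}$ for the eigenvalue $d^2$, so $P_\star w=d^2u$. Hence $\ip{w/d}{(P_\star-P_V)h}=\ip{P_\star w/d}{h}=d\,\ip{u}{h}$ for every $h$, that is, $\|(P_\star-P_V)h\|\ge d\,|\ip{u}{h}|$.
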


\begin{proof}
Let \(d\coloneqq\dist(V,E_\star)\); for \(d=0\) there is nothing to prove.
Since \(E_\star\) and \(V\) have the same finite dimension,
\(d=\|(I-P_V)P_\star\|\), and there is a unit vector \(u\in E_\star\) with
\(\|(I-P_V)u\|=d\). Set \(w\coloneqq(I-P_V)u/d\). Then \(w\perp V\),
\(\|w\|=1\), and \(\ip{w}{u}=\|(I-P_V)u\|^2/d=d\), so that
\(d\le\|P_\star w\|\). By the definition of \(s_\star\),
\(s_\star\|P_\star w\|\le(\sum_j\ip{P_\star w}{g_j}^2)^{1/2}\), and, since
\(w\perp V\),
\[
    \ip{P_\star w}{g_j}
    =\ip{w}{(I-P_V)g_j}-\ip{w}{(I-P_\star)g_j},
    \qquad
    j=1,\ldots,q .
\]
Hence \(|\ip{P_\star w}{g_j}|\le\cR(V)+\cR(E_\star)\), so that
\(s_\star d\le s_\star\|P_\star w\|\le\sqrt q\,(\cR(V)+\cR(E_\star))\).
If \(\cG\subset E_\star\), then \(\cR(E_\star)=0\), and
\(\cR(V)\le(\cJ(V)/\alpha)^{1/2}\) gives the last claim.
\end{proof}

If \(\cG\subset E_\star\) and \(s_\star>0\), the objective thus identifies
\(E_\star\) globally, without any assumption on the spectrum.

\begin{corollary}[Identification from the objective]\label{cor:identification}
Let \(\delta_\star>0\) and \(s_\star>0\), and let \(V\) be an
\(m\)-dimensional subspace with
\begin{equation}\label{eq:basin-condition}
    \frac{\sqrt q}{s_\star}
    \Bigl(\Bigl(\frac{\cJ(V)}{\alpha}\Bigr)^{1/2}+\cR(E_\star)\Bigr)
    \le
    \frac{r_\star}{\sqrt2}.
\end{equation}
Then
\begin{equation}\label{eq:quant-id-a}
    \dist(V,E_\star)
    \le
    \frac{4\,\cI(V)}{\delta_\star}
    \le
    \frac{4\,\cJ(V)^{1/2}}{\delta_\star}.
\end{equation}
In particular, if \(\eta_\Theta<1\) and \eqref{eq:basin-condition} holds
with \(\cJ(V)^{1/2}\) replaced by \(\varepsilon_\Theta\), then the selected
space \(\widehat V\) of \cref{thm:main} satisfies
\(\dist(\widehat V,E_\star)\le4\varepsilon_\Theta/\delta_\star\), and the
isometry \(\cQ\) of \cref{thm:galerkin-spectral} for \(V=\widehat V\)
satisfies
\begin{equation}\label{eq:selected-spectral-bound}
    \bigl\|\cQ^*\widehat\cK\cQ-T|_{E_\star}\bigr\|
    \le
    \frac{\Delta_\Theta+16M_T\,\varepsilon_\Theta/\delta_\star}{1-\eta_\Theta}.
\end{equation}
\end{corollary}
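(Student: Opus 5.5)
The plan is to chain \cref{lem:localization} into the lower bound of \cref{thm:quantitative-identification}, and then insert the result into \cref{thm:galerkin-spectral}.

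\emph{Step 1 (localization).} The bound \eqref{eq:localization} of \cref{lem:localization} gives
\(\dist(V,E_\star)\le(\sqrt q/s_\star)(\cR(V)+\cR(E_\star))\). Since \(\cR(V)\le(\cJ(V)/\alpha)^{1/2}\) by the definition of \(\cJ\), the hypothesis \eqref{eq:basin-condition} yields \(\dist(V,E_\star)\le r_\star/\sqrt2\). This places \(V\) in the neighborhood where \cref{thm:quantitative-identification} applies.

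\emph{Step 2 (identification).} Since \(\delta_\star>0\), the lower bound in \eqref{eq:local-identification} gives \(\frac{\delta_\star}{4}\dist(V,E_\star)\le\cI(V)\). This is the first inequality of \eqref{eq:quant-id-a}. The second follows from \(\cI(V)^2\le\cJ(V)\).

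\emph{Step 3 (selected space).} Assume \(\eta_\Theta<1\) and that \eqref{eq:basin-condition} holds with \(\cJ(V)^{1/2}\) replaced by \(\varepsilon_\Theta\). \cref{thm:main} gives \(\cJ(\widehat V)^{1/2}\le\varepsilon_\Theta\). The left-hand side of \eqref{eq:basin-condition} is monotone in \(\cJ(V)^{1/2}\), so the condition also holds for \(V=\widehat V\) itself. Steps 1--2 then give \(\dist(\widehat V,E_\star)\le4\cJ(\widehat V)^{1/2}/\delta_\star\le4\varepsilon_\Theta/\delta_\star\).

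\emph{Step 4 (spectral bound).} To apply \cref{thm:galerkin-spectral} to \(\widehat V\), we need \(d\coloneqq\dist(\widehat V,E_\star)<1\). This holds because \(d\le r_\star/\sqrt2<1\) by Step 1. We also need \(\eta_{\widehat V}<1\), which follows from \(\eta_{\widehat V}\le\eta_\Theta<1\). Then \eqref{eq:aligned-matrix-bound} gives
\[
\|\cQ^*\widehat\cK\cQ-T|_{E_\star}\|\le\frac{\Delta(\widehat V)+4M_Td}{1-\eta_{\widehat V}}.
\]
Bounding the right-hand side monotonically, with \(\Delta(\widehat V)\le\Delta_\Theta\), \(\eta_{\widehat V}\le\eta_\Theta\), and \(d\le4\varepsilon_\Theta/\delta_\star\), gives \eqref{eq:selected-spectral-bound}.

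All steps are direct compositions of earlier results. The only delicate point is Step 3: the basin condition must be verified for the data-dependent \(\widehat V\) through the a priori bound \(\cJ(\widehat V)^{1/2}\le\varepsilon_\Theta\), and this requires the monotonicity argument rather than a bound on the unknown \(\cJ(\widehat V)\) itself.
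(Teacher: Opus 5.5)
Your proposal is correct and follows the paper's proof essentially step for step. The steps match exactly: localization via \eqref{eq:localization} and \(\cR(V)\le(\cJ(V)/\alpha)^{1/2}\) to reach \(\dist(V,E_\star)\le r_\star/\sqrt2\); the lower bound of \eqref{eq:local-identification} with \(\cI(V)\le\cJ(V)^{1/2}\); transfer of the basin condition to \(\widehat V\) through \(\cJ(\widehat V)^{1/2}\le\varepsilon_\Theta\); and finally \eqref{eq:aligned-matrix-bound} with \(d\le r_\star/\sqrt2<1\), \(\eta_{\widehat V}\le\eta_\Theta\), and \(\Delta(\widehat V)\le\Delta_\Theta\).
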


\begin{proof}
By \eqref{eq:localization} and \(\cR(V)\le(\cJ(V)/\alpha)^{1/2}\), condition
\eqref{eq:basin-condition} gives \(\dist(V,E_\star)\le r_\star/\sqrt2\), and
the lower bound in \eqref{eq:local-identification} together with
\(\cI(V)\le\cJ(V)^{1/2}\) gives \eqref{eq:quant-id-a}. For the selected
space, \(\cJ(\widehat V)^{1/2}\le\varepsilon_\Theta\) by \cref{thm:main}, so
that \eqref{eq:basin-condition} holds for \(V=\widehat V\) and
\(\dist(\widehat V,E_\star)\le4\varepsilon_\Theta/\delta_\star\). Since
\(\dist(\widehat V,E_\star)\le r_\star/\sqrt2<1\),
\(\eta_{\widehat V}\le\eta_\Theta\), and
\(\Delta(\widehat V)\le\Delta_\Theta\), \eqref{eq:aligned-matrix-bound} for
\(V=\widehat V\) gives \eqref{eq:selected-spectral-bound}.
\end{proof}

Since \(\varepsilon_\Theta\ge2\Delta_\Theta+\zeta^{1/2}\), the hypothesis on
the selected space requires \(\alpha^{-1/2}(\Delta_\Theta+\zeta^{1/2})\) to
be small, so that the weight \(\alpha\) cannot be chosen small relative to
the squared learning error and the tolerance. Condition
\eqref{eq:basin-condition} also requires the reconstruction observables to
lie close to \(E_\star\), so that identification uses prior information
about the target through \(\cG\).

\paragraph{Bias}
If \(\cG\not\subset E_\star\), the reconstruction term may bias the
minimizers of \(\cJ\) away from \(E_\star\), and without a condition on
\(\cR(E_\star)\) they may converge to a different invariant subspace as
\(\alpha\to0\). For an irreducible two-state Markov chain with invariant
\(\rho\), a nonconstant eigenfunction \(u\) of \(T\) with \(\|u\|=1\),
\(E_\star=\spanop\{u\}\), and \(\cG=\{2\cdot1+u\}\), one has
\(\cJ(E_\star)=4\alpha>\alpha=\cJ(\spanop\{1\})\), and the minimizers
converge to \(\spanop\{1\}\). Condition \eqref{eq:basin-condition} excludes
this case, since it requires \(\cR(E_\star)\) to be small relative to
\(s_\star r_\star\).

\subsection{Residual bound for computed eigenpairs}
\label{app:residual}

Individual computed eigenpairs can be checked a posteriori by a residual of
the form used in residual DMD \cite{colbrook2024rigorous}, computed from the
matrices \(\widehat K\) and \(\widehat L\) of
\cref{sec:computational-realization}.

\begin{proposition}\label{prop:eigenpair-certificate}
Let \(\eta\coloneqq\eta_V<1\). For \(\mu\in\CC\) and
\(c\in\CC^m\setminus\{0\}\), let \(\varphi\coloneqq\sum_jc_j\widehat\psi_j\) and
\[
    \widehat{\res}(\mu,c)
    \coloneqq
    \|c\|_2^{-1}
    \bigl(c^*\widehat Lc
    -2\operatorname{Re}(\overline\mu\,c^*\widehat Kc)
    +|\mu|^2\|c\|_2^2\bigr)^{1/2}.
\]
Then
\begin{equation}\label{eq:eigenpair-certificate}
    \frac{\|T\varphi-\mu\varphi\|}{\|\varphi\|}
    \le
    \Delta(V)
    +\kappa(\eta)\,\widehat{\res}(\mu,c).
\end{equation}
\end{proposition}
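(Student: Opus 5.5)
The plan is to split the residual into a learning part and an empirical residual, and then pass from the empirical norm to the \(L^2(\rho)\) norm with \cref{lem:sampling}. Since \(\varphi\in V\), the triangle inequality gives
\[
    \|T\varphi-\mu\varphi\|
    \le
    \|(T-\widehat T)\varphi\|
    +\|\widehat T\varphi-\mu\varphi\|
    \le
    \Delta(V)\|\varphi\|
    +\|\widehat T\varphi-\mu\varphi\|.
\]
For a nonlinear estimator, \(\widehat T\varphi\) is understood as \(\widehat T|_V\varphi=\sum_jc_j\widehat T\widehat\psi_j\), following \cref{rem:nonlinear-estimator}. It therefore suffices to show
\[
    \|\widehat T\varphi-\mu\varphi\|
    \le
    \kappa(\eta)\,\widehat{\res}(\mu,c)\,\|\varphi\|.
\]

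Next I would identify \(\widehat{\res}\) with an empirical quantity. The function \(\widehat T\varphi-\mu\varphi\) lies in \(V+\widehat TV\subset W_V\). By \eqref{eq:sampling-norm}, \(\ip{\cdot}{\cdot}_n\) is an inner product on \(W_V\), since \(\eta<1\). Expanding in the computational basis, which is orthonormal in \(\ip{\cdot}{\cdot}_n\), gives
\[
    \|\widehat T\varphi\|_n^2=c^*\widehat Lc,
    \qquad
    \ip{\varphi}{\widehat T\varphi}_n=c^*\widehat Kc,
    \qquad
    \|\varphi\|_n^2=\|c\|_2^2.
\]
The inner product is conjugate-linear in its first argument, and \(\widehat K,\widehat L\) are real. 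Hence
\[
    \|\widehat T\varphi-\mu\varphi\|_n^2
    =c^*\widehat Lc-2\operatorname{Re}\bigl(\overline\mu\,c^*\widehat Kc\bigr)+|\mu|^2\|c\|_2^2,
\]
so that \(\widehat{\res}(\mu,c)=\|\widehat T\varphi-\mu\varphi\|_n/\|\varphi\|_n\). This step is only bookkeeping; the one point to check is the conjugation in the cross term.

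Finally I would apply \eqref{eq:sampling-norm} twice. The lower bound on \(W_V\) gives \(\|\widehat T\varphi-\mu\varphi\|\le(1-\eta)^{-1/2}\|\widehat T\varphi-\mu\varphi\|_n\). The upper bound on \(V\) gives \(\|\varphi\|_n\le(1+\eta)^{1/2}\|\varphi\|\). Combining the two,
\[
    \frac{\|\widehat T\varphi-\mu\varphi\|}{\|\varphi\|}
    \le
    \Bigl(\frac{1+\eta}{1-\eta}\Bigr)^{1/2}\widehat{\res}(\mu,c)
    =\kappa(\eta)\,\widehat{\res}(\mu,c).
\]
Inserting this into the first display yields \eqref{eq:eigenpair-certificate}.

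The main obstacle is minor: making sure both norm comparisons take place inside \(W_V\), where \(\eta_V\) controls them. This holds because \(\varphi\in V\) and \(\widehat T\varphi\in\widehat TV\), with the complexification handled by the convention that real spaces and their complexifications are denoted by the same symbol.
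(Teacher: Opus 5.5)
Your proof is correct and follows essentially the same route as the paper: it identifies \(\widehat{\res}(\mu,c)\) with \(\|\widehat T\varphi-\mu\varphi\|_n/\|\varphi\|_n\) through the residual-DMD identity, applies \eqref{eq:sampling-norm} on \(W_V\) to obtain the factor \(\kappa(\eta)\), and concludes with the triangle inequality and \(\Delta(V)\). Your explicit checks of the conjugation in the cross term and of the nonlinear-estimator convention spell out points that the paper leaves implicit.
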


\begin{proof}
Since \(\widehat\Psi\) is orthonormal in \(\ip{\cdot}{\cdot}_n\),
\(\|\varphi\|_n=\|c\|_2\) and
\(\|\widehat T\varphi-\mu\varphi\|_n^2
=c^*\widehat Lc-2\operatorname{Re}(\overline\mu\,c^*\widehat Kc)+|\mu|^2\|c\|_2^2\),
which is the residual identity of residual DMD \cite{colbrook2024rigorous}
with the values of the estimator in place of observed values; thus
\(\widehat{\res}(\mu,c)=\|\widehat T\varphi-\mu\varphi\|_n/\|\varphi\|_n\).
Since \(\widehat T\varphi-\mu\varphi\) and \(\varphi\) belong to
\(W_V\), \eqref{eq:sampling-norm} gives
\(\|\widehat T\varphi-\mu\varphi\|\le\kappa(\eta)\,
\widehat{\res}(\mu,c)\,\|\varphi\|\), and
\(\|T\varphi-\mu\varphi\|\le\|(T-\widehat T)\varphi\|
+\|\widehat T\varphi-\mu\varphi\|\) gives
\eqref{eq:eigenpair-certificate}.
\end{proof}

The quantity
\(\widehat{\res}(\mu,c)=\|\widehat T\varphi-\mu\varphi\|_n/\|\varphi\|_n\)
is computable; for an eigenpair \(\widehat Kc=\widehat\mu c\),
\(\widehat{\res}(\widehat\mu,c)^2=c^*\widehat Lc/\|c\|_2^2-|\widehat\mu|^2\).
For normal \(T\), \eqref{eq:eigenpair-certificate} bounds the distance from
\(\mu\) to \(\spec(T)\); in general, it gives the corresponding
pseudospectral inclusion \cite{trefethen2005spectra}. As in residual DMD
\cite{colbrook2024rigorous}, one may retain only the computed eigenpairs
whose right-hand side is below a prescribed tolerance; in contrast with the
single-transition residuals of \cref{rem:stochastic-residual-bias},
\(\widehat{\res}\) contains no conditional-variance term. Negative computed
multipliers of a reversible diffusion can be discarded directly, since
\(\spec(T_\tau)\subset[0,1]\) if \(\cL\) is self-adjoint and nonpositive.

\subsection{Forecast bounds}
\label{app:forecast-supplements}

The norm of the computed Galerkin operator satisfies
\(\|\widehat\cK\|\le\kappa(\eta_V)\,\|\widehat K\|_2\) by \eqref{eq:sampling-norm},
because \(\|v\|_n=\|c\|_2\) for \(v=\sum_jc_j\widehat\psi_j\). The bound
\eqref{eq:matrix-power-forecast-bound} also applies with the compressed
operator \(T_\tau^{\rm obs}\) of \cref{app:partial-observation} in place of
\(T\); its powers are the multistep targets under the Markov condition
stated there.

Suppose that \(\rho\) is invariant, so that \(T\) is a contraction with
\(T1=1\), and that \(1\in V\). Let
\(V_0\coloneqq\{v\in V:\ip{1}{v}=0\}\), and let
\(f_0\coloneqq f-\ip{1}{f}1\) denote the mean-zero part of \(f\).

\begin{corollary}[Forecast bound uniform in the horizon]\label{cor:uniform-forecast}
In the setting of \cref{thm:forecast}, with \(\rho\) invariant and
\(1\in V\), assume in addition that
\(\widehat\cK1=1\), \(\widehat\cK V_0\subset V_0\), and
\(\|\widehat\cK^k|_{V_0}\|\le C_Kr_K^k\) for \(k\ge0\), with \(C_K\ge1\) and
\(0\le r_K<1\).
Then, for every \(N\ge1\),
\begin{equation}\label{eq:uniform-forecast-bound}
    \|T^Nf-f^{(N)}\|
    \le
    C_K\Bigl(\frac{\|(I-P_V)f_0\|}{1-\eta_V}
    +\frac{e(V)\,\|P_Vf_0\|}{1-r_K}\Bigr).
\end{equation}
\end{corollary}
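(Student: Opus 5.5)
The plan is to reduce to the mean-zero part $f_0$ and then rerun the three-term telescoping decomposition from the proof of \cref{thm:forecast}. The growth factors $M^N$ and $NM^{N-1}$ will be replaced by the contractivity of $T$ and by the geometric decay of $\widehat\cK^k$ on $V_0$.

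\emph{Step 1 (removing the constant).} The forecast map $f\mapsto f^{(N)}=\widehat\cK^N\widehat P_Vf$ is linear, and so is $T^N$. Since $1\in V$, the constant is reproduced exactly: $\widehat P_V1=1$ (the empirical projection is exact on $V$), $\widehat\cK^N1=1$ by hypothesis, and $T^N1=1$ by invariance of $\rho$. Hence $T^Nf-f^{(N)}=T^Nf_0-\widehat\cK^N\widehat P_Vf_0$. Moreover $f_0\in W_V$, because $f\in\spanop\cG$ and $1\in V$, so \eqref{eq:sampling-projection} applies to $f_0$.

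\emph{Step 2 (decomposition).} With $P\coloneqq P_V$ and $\widehat P\coloneqq\widehat P_V$, I write
\[
T^Nf_0-\widehat\cK^N\widehat Pf_0
=T^N(I-P)f_0+\sum_{k=0}^{N-1}T^{N-1-k}(T-\widehat\cK)\widehat\cK^kPf_0+\widehat\cK^N(P-\widehat P)f_0 .
\]
Because $1\in V$, we have $\ip{1}{Pf_0}=\ip{1}{f_0}=0$, so $Pf_0\in V_0$. Invariance of $V_0$ then gives $\widehat\cK^kPf_0\in V_0$ and $\|\widehat\cK^kPf_0\|\le C_Kr_K^k\|Pf_0\|$. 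Using $\|T\|\le1$ and \eqref{eq:one-step-error}, the sum is bounded by
\[
e(V)\,C_K\sum_k r_K^k\,\|Pf_0\|\le C_K\,e(V)\,\|Pf_0\|/(1-r_K).
\]
The first term is bounded by $\|(I-P)f_0\|$.

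\emph{Step 3 (the last term; the expected obstacle).} The difficulty is that $(P-\widehat P)f_0$ need not be mean-zero, since the empirical projection does not respect $\rho$-means. The stability hypothesis alone does not apply to it, and a constant component does not decay. To handle this, I will show that $\|\widehat\cK^N|_V\|\le C_K$ on all of $V$. Write $u\in V$ as $u=a1+u_0$ with $u_0\in V_0$; this splitting is $\rho$-orthogonal. Then $\widehat\cK^Nu=a1+\widehat\cK^Nu_0$ with $\widehat\cK^Nu_0\in V_0$, which is again an orthogonal splitting. Hence
\[
\|\widehat\cK^Nu\|^2=a^2+\|\widehat\cK^Nu_0\|^2\le a^2+C_K^2\|u_0\|^2\le C_K^2\|u\|^2,
\]
using $C_K\ge1$. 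Combining this with \eqref{eq:sampling-projection}, the last term is at most $C_K\eta_V(1-\eta_V)^{-1}\|(I-P)f_0\|$. Adding it to the first term and using $1\le C_K$ gives
\[
\Bigl(1+\frac{C_K\eta_V}{1-\eta_V}\Bigr)\|(I-P)f_0\|\le\frac{C_K}{1-\eta_V}\|(I-P)f_0\|.
\]
Summing the three bounds yields \eqref{eq:uniform-forecast-bound}.
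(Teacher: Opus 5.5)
Your proposal is correct and follows essentially the same route as the paper. Both proofs apply the same three-term decomposition to $f_0$, bound the telescoping sum by geometric decay on $V_0$, use the orthogonal splitting $V=\spanop\{1\}\oplus V_0$ to obtain $\|\widehat\cK^N\|\le C_K$ for the sampling term, and finish with $1+C_K\eta_V/(1-\eta_V)\le C_K/(1-\eta_V)$. The only difference is that you spell out the norm computation behind $\|\widehat\cK^N\|\le C_K$, which the paper states in one line.
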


\begin{proof}
Write \(P\coloneqq P_V\) and \(\widehat P\coloneqq\widehat P_V\). Since
\(T1=1\), \(\widehat\cK1=1\), and \(P1=\widehat P1=1\),
\[
    T^Nf-f^{(N)}
    =T^N(I-P)f_0
    +\sum_{k=0}^{N-1}T^{N-1-k}(T-\widehat\cK)\widehat\cK^kPf_0
    -\widehat\cK^N(\widehat P-P)f_0 .
\]
Because \(1\in V\), \(Pf_0\in V_0\). Hence
\(\widehat\cK^kPf_0\in V_0\) has norm at most \(C_Kr_K^k\|Pf_0\|\), and
\((T-\widehat\cK)\widehat\cK^kPf_0\) has norm at most
\(e(V)C_Kr_K^k\|Pf_0\|\) by \eqref{eq:one-step-error}. Since \(V=\spanop\{1\}\oplus V_0\)
orthogonally and \(\widehat\cK\) leaves both summands invariant,
\(\|\widehat\cK^N\|\le C_K\), and \eqref{eq:sampling-projection} bounds the
last term by \(C_K\eta_V(1-\eta_V)^{-1}\|(I-P)f_0\|\). Since \(T\) is a
contraction, the first term is at most \(\|(I-P)f_0\|\), and the sum is at
most \(C_Ke(V)(1-r_K)^{-1}\|Pf_0\|\). Finally,
\(1+C_K\eta_V/(1-\eta_V)\le C_K/(1-\eta_V)\).
\end{proof}

The decay condition holds, with a suitable \(C_K\), for every \(r_K\)
larger than the spectral radius of \(\widehat\cK\) on \(V_0\). The condition
\(\widehat\cK1=1\) holds when \(1\in V\) and \(\widehat T1=1\), as for
composition estimators. The condition \(\widehat\cK V_0\subset V_0\) holds
for the exact compression \(P_VT|_V\), but in general not for
\(\widehat\cK\), and it involves the \(\rho\)-means of the basis functions.
From the sample, one can instead impose the invariance of
\(V_{0,n}\coloneqq\{v\in V:\ip{1}{v}_n=0\}\). If the computational basis
is chosen with \(\widehat\psi_1=1\), this condition and \(\widehat\cK1=1\)
state that the first column and the first row of \(\widehat K\) are those of
the identity matrix. With \(V_{0,n}\) in place of \(V_0\) in the hypotheses
of \cref{cor:uniform-forecast}, \eqref{eq:uniform-forecast-bound} holds with
\(C_K\) replaced by \(2\kappa(\eta_V)C_K\), because every \(v\in V\) is
the sum of a constant and an element of \(V_{0,n}\), both of norm at most
\(\kappa(\eta_V)\|v\|\) by \eqref{eq:sampling-norm}, and
\((T-\widehat\cK)1=0\), so that the constant component of
\(\widehat\cK^kP_Vf_0\) does not contribute to the sum in the proof. The
decay hypothesis is stated in the \(L^2(\rho)\) norm; a bound read off
from \(\widehat K\) in the Euclidean norm enters with a further factor
\(\kappa(\eta_V)\) by \eqref{eq:sampling-norm}.

\begin{remark}\label{rem:postprocessing}
A modification \(\widehat\cK\mapsto\widehat\cK'\) of the computed Galerkin
operator, such as a rescaling of spectral blocks or the constraints above,
enters \eqref{eq:one-step-error}, \eqref{eq:aligned-matrix-bound}, and the
forecast bounds through the additional term \(\|\widehat\cK'-\widehat\cK\|\),
with \(\|\widehat\cK\|\) replaced by \(\|\widehat\cK'\|\) in \(M\). Replacing \(\widehat K\) by
its symmetric part changes \(\widehat\cK\) by at most
\(\kappa(\eta_V)\|\widehat K-\widehat K^\top\|_2/2\), which is small together
with \(\Delta(V)\) and \(\eta_V\) when \(T\) is self-adjoint on
\(L^2(\rho)\). If the component \((I-\widehat P_V)f\) is propagated by an
operator \(U_N\) instead of being discarded, the forecast error is at most
\(NM^{N-1}e(V)\,\|\widehat P_Vf\|+\|{(T^N-U_N)(I-\widehat P_V)f}\|\).
\end{remark}

For measure-preserving deterministic dynamics, \(T\) is an isometry, whereas
the forecast of a stable reduced model relaxes to a constant, which is zero
if the spectral radius of \(\widehat\cK\) is less than one, as for the
Lorenz systems (\cref{app:numerical-details}). The forecast error then
tends, as \(N\to\infty\), to a limit that is at least the standard deviation
\(\|f_0\|\) of \(f\), the error of a constant forecast; the plateaus at the
long leads in \cref{fig:lorenz-forecast} are consistent with this.

\section{Partial observations}\label{app:partial-observation}

Let \(y_t\coloneqq h(X_t)\) be the observed variables
(\cref{rem:neural-operator}), and let \(\cH_Y\subset\cH\) be the closed
subspace of observables of the form \(f\circ h\). If \(X_t\sim\rho\), then,
for \(\varphi\in\cH_Y\), the Markov and tower properties give
\[
    \EE\!\left[\varphi(X_{t+\tau})\mid y_t\right]
    =\bigl(P_{\cH_Y}T_\tau\varphi\bigr)(X_t).
\]
Thus partially observed lagged data identify the compressed operator
\[
    T_\tau^{\rm obs}
    \coloneqq
    P_{\cH_Y}T_\tau|_{\cH_Y}.
\]
If a \(T_\tau\)-invariant subspace \(E\) lies in \(\cH_Y\), then
\(T_\tau^{\rm obs}|_E=T_\tau|_E\); otherwise, the compressed and full-state
spectra may differ. For \(\varphi\in\cH_Y\) and \(\mu\in\CC\),
\[
    \|T_\tau\varphi-\mu\varphi\|^2
    =\|T_\tau^{\rm obs}\varphi-\mu\varphi\|^2
     +\|(I-P_{\cH_Y})T_\tau\varphi\|^2.
\]
The powers of \(T_\tau^{\rm obs}\) represent the multistep observed
evolution only when the sampled observation process is Markov at lag
\(\tau\). Delay coordinates may recover the deterministic state under an
embedding hypothesis, or more generally define an enlarged observable
process
\cite{takens1981detecting,arbabi2017ergodic,kamb2020time,giannakis2019data,das2019delay}.

A map \(\widehat\Phi\colon Y\to Y\) trained by least squares on the lagged
observations \((y_t,y_{t+\tau})\) approximates the conditional mean
\(y\mapsto\EE[y_{t+\tau}\mid y_t=y]\), so that \(\widehat\Phi\circ h\)
approximates \(T_\tau^{\rm obs}\) applied to the components of \(h\). If the
observed variables do not evolve autonomously and \(f\) is an observable on
\(Y\) that is not affine, then
\(f(\EE[y_{t+\tau}\mid y_t])\ne\EE[f(y_{t+\tau})\mid y_t]\) in general, so
that \(f\circ\widehat\Phi\circ h\) does not approximate
\(T_\tau^{\rm obs}(f\circ h)\); an estimate of \(T_\tau^{\rm obs}(f\circ h)\)
is then obtained from the conditional law of \(y_{t+\tau}\) given \(y_t\)
(\cref{app:fno-setting}) or by regressing \(f(y_{t+\tau})\) on \(y_t\).

\section{Learning error for composition estimators and the FNO setting}
\label{app:fno-setting}

\paragraph{Composition estimators}
Let \(\widehat Tf=f\circ\widehat\Phi\) be the composition estimator of
\cref{rem:neural-operator}, let \(\dist_\Omega\) be a metric on
\(\Omega=Y\), and let \(\Omega_0\subset\Omega\) contain \(\Phi(y)\) and
\(\widehat\Phi(y)\) for \(\rho\)-almost every \(y\). We write
\(\Lip(\,\cdot\,)\) for the Lipschitz constant on \(\Omega_0\) of a map into
a Euclidean space, with respect to \(\dist_\Omega\) and the Euclidean norm.
For an \(m\)-dimensional subspace \(V\subset\Fn\) with \(\rho\)-orthonormal
basis \(\chi\), set
\[
    \Lip(V)\coloneqq\Lip(\chi),
    \qquad
    \chi\colon y\mapsto(\chi_1(y),\ldots,\chi_m(y))^\top\in\mathbb R^m .
\]
This quantity does not depend on the choice of \(\chi\), and it is the
largest Lipschitz constant on \(\Omega_0\) of a function \(v\in V\) with
\(\|v\|=1\). Since
\(|v(\widehat\Phi(y))-v(\Phi(y))|\le\Lip(V)\,\|v\|\,
\dist_\Omega(\widehat\Phi(y),\Phi(y))\) for \(v\in V\), taking the
\(L^2(\rho)\) norm gives \eqref{eq:composition-learning-error}. For
stochastic dynamics, \(y_{t+\tau}\) is not a function of \(y_t\). If
\(f\circ\widehat\Phi\) is then replaced by the mean of \(f\) under a learned
transition kernel, such as a generative model of the next observation, and
\(\Omega_0\) contains the supports of the learned and true transition kernels
for \(\rho\)-almost every \(y\), the
Kantorovich--Rubinstein duality \cite{villani2009optimal} gives
\eqref{eq:composition-learning-error} with
\(\dist_\Omega(\widehat\Phi,\Phi)\) replaced by the Wasserstein-1 distance
between the learned and true transition kernels.

\paragraph{FNO setting}
For \cref{cor:fno-rate}, take
\[
    \Omega=L^2(\mathbb T^{d_x};\mathbb R^{d_{\rm out}}),
    \qquad
    \Ba
    =\{u\in H^a(\mathbb T^{d_x};\mathbb R^{d_{\rm out}}):
      \|u\|_{H^a}\le c_a\},
    \qquad a>d_x/2,
\]
for a fixed radius \(c_a>0\) and \(d_{\rm out}\) components of the state,
with the \(L^2\) distance \(\dist_\Omega\), so that
\(\lVert\dist_\Omega(\widehat\Phi,\Phi)\rVert_{L^2(\rho)}^2
=\int\|\widehat\Phi(u)-\Phi(u)\|_{L^2}^2\dd\rho(u)\)
is the mean-square error of the learned map that is bounded in
\cite[Theorem~3.6]{chandramoorthy2026spectral}. Assume that \(\Phi\)
belongs to the class of operators \(\mathsf S^{\rm poly}\) of
\cite{chandramoorthy2026spectral}, that is, it satisfies
\cite[Condition~3.3]{chandramoorthy2026spectral} on \(\Ba\) through a stable
and accurate semi-implicit Fourier spectral scheme with polynomial
nonlinearity, and that \(\operatorname{supp}\rho\subset\Ba\), so that
\cite[Theorem~3.6]{chandramoorthy2026spectral} applies to the corresponding
FNO class. The estimator
\(\widehat\Phi\) is the corresponding constrained empirical risk minimizer
trained on exact pairs \((u_i,\Phi(u_i))\) with \(u_i\) drawn independently
from \(\rho\). The standing assumption of \cref{sec:kbe} that
\(Tf=f\circ\Phi\) is well defined and bounded on \(L^2(\rho)\) remains in
force. It is not implied by the hypotheses of the cited theorem, which allow
any input measure supported in \(\Ba\); it holds, for instance, if \(\rho\)
is invariant under \(\Phi\), and more generally if the image measure of
\(\rho\) under \(\Phi\) has a bounded density with respect to \(\rho\). The
candidate observables are assumed to be Lipschitz on a bounded set
\(\Omega_0\subset\Omega\) that contains \(\Phi(\operatorname{supp}\rho)\)
and the values on \(\operatorname{supp}\rho\) of every map in the FNO
class, so that \(\Lip(V_\theta)<\infty\) does not depend on
\(\widehat\Phi\).

\begin{remark}[Choice of the level]\label{rem:level}
A larger level of the candidate family is not always better, because the
learning and sampling errors in \cref{thm:main} must be controlled
uniformly over richer families, and a sufficiently rich family can make
\(\widehat\cJ\) small on the sample without \(V\) being nearly invariant.
By \eqref{eq:sampling-norm} and \cref{lem:basis-linear-surrogate},
\(\Delta(V_\theta)\le(m(1+\eta_{V_\theta}))^{1/2}
\max_j\|\widehat T\widehat\psi_j-T\widehat\psi_j\|\) for the computational
basis of \(V_\theta\), so that \(\Delta_\Theta\) is small whenever
\(\widehat T\) is uniformly accurate on the candidate basis functions. For
composition estimators, the level enters through the Lipschitz constant.
With the Gram matrix \(G_{\phi_\theta}\) of \(\phi_\theta\) in
\(L^2(\rho)\),
\(\Lip(V_\theta)\le\Lip(\phi_\theta)\,\lambda_{\min}(G_{\phi_\theta})^{-1/2}\),
and for polynomial dictionaries of degree \(p\) on a bounded interval that
contains \(\Omega_0\) and on which \(\rho\) has a density bounded below,
\(\Lip(V_\theta)\) is at most of order \(p^3\) by the Markov and Nikolskii
inequalities \cite{borwein1995polynomials}. In the setting of
\cref{cor:fno-rate}, \(\Delta_{\Theta_p}\) therefore tends to zero in
probability along levels \(p=p(\nop)\) for which
\(\Lip_{\Theta_p}\,\nop^{-1/(12+4d_x/a)}\to0\) as \(\nop\to\infty\).
\end{remark}

\paragraph{A posteriori estimate}
For a general approximation, the learning error can be estimated from held-out
data. Conditionally on \(\widehat T\) and on the sample points, let
\(X\sim\rho\) be a state with successor \(X^+\), independent of both, and
let \(f\in\Fn\) satisfy \(\EE\,|f(X^+)|^2<\infty\). Then, as in
\cref{rem:stochastic-residual-bias},
\[
    \EE\,|(\widehat Tf)(X)-f(X^+)|^2
    =\|(\widehat T-T)f\|^2+\EE\,|f(X^+)-(Tf)(X)|^2 .
\]
Summed over the computational basis \(\widehat\Psi\) of \(V\), the expected
squared prediction error of \(\widehat T\) on held-out data therefore
equals
\(\sum_j\|(\widehat T-T)\widehat\psi_j\|^2
+\sum_j\EE\,|\widehat\psi_j(X^+)-(T\widehat\psi_j)(X)|^2\). The first sum
is at least \(\Delta(V)^2/(1+\eta_V)\) by \cref{lem:basis-linear-surrogate}
and \eqref{eq:sampling-norm}, so that the prediction error bounds
\(\Delta(V)^2\) from above up to this factor; the second sum, the
conditional variance, vanishes for deterministic dynamics.

\section{Implementation details of the numerical experiments}
\label{app:numerical-details}

\paragraph{Ornstein--Uhlenbeck process}
The backward equation is discretized on 128 points in \([-5,5]\). The
training set contains 384 observables, each a linear combination of
Gaussian-weighted Hermite polynomials, Fourier modes, and Gaussian
functions, together with their images at \(\tau=0.15\). The
FNO uses 24 Fourier modes, width 96, four layers, and 3000 epochs. The
Galerkin space consists of the constant function, which is held fixed, and
19 functions from a \(\tanh\) neural dictionary, followed by weighted
orthonormalization. The displayed
eigenfunctions are normalized and sign-aligned, with polynomial projection
and parity postprocessing used only for visualization.

\paragraph{Triple-well diffusion}
The benchmark follows \cite{xu2025koopmansemigroup}; the potential is the
three-hole potential of \cite{metzner2006illustration},
\[
\begin{aligned}
U(x,y)\coloneqq{}&3e^{-x^2-(y-1/3)^2}-3e^{-x^2-(y-5/3)^2}
          -5e^{-(x-1)^2-y^2}-5e^{-(x+1)^2-y^2}\\
        &+0.2x^4+0.2(y-1/3)^4.
\end{aligned}
\]
The scalar fields are represented on a \(64\times64\) grid over
\([-2,2]\times[-1,2]\). Sixty-four random Fourier--Gaussian observables are
used for training. The FNO uses \(16\times16\) modes, width 96, five layers,
and 500 epochs. The 49-dimensional Galerkin space contains a constant and 48
neural functions. Reference rates are computed from the finite-difference generator on a \(128\times128\) grid. The displayed modes use one-to-one Hungarian matching, symmetry constraints, and smoothing.

\paragraph{Lorenz systems}
The first 12000 sampled times are used for fitting and the last 3000 for
evaluation. Lorenz--63 uses \(\tau=0.1\) and a 140-dimensional
dictionary containing the standardized sensor curve and degree-two through
degree-four monomials of five PCA coordinates. Lorenz--96 uses \(\tau=0.02\)
and a 30-dimensional dictionary containing the standardized field and
quadratic monomials of four PCA coordinates. The spectral radii of the fitted Galerkin matrices are
0.958 (Lorenz--63) and 0.981 (Lorenz--96), so the configured upper bounds on
the spectral radius (radial caps) are inactive.

\paragraph{Sea-surface temperature}
The 11 input fields are listed in \cref{tab:sst-fields}. The ERA5
fields are daily means of the 6-hourly data on the WeatherBench~2 product
conservatively regridded to the \(32\times64\) grid. The wind stress is computed
at each 6-hourly step by the bulk formula
\(\rho_a C_D\,|\mathbf u_{10}|\,\mathbf u_{10}\), where \(\mathbf u_{10}\) is
the 10\,m wind, \(\rho_a=1.225\) kg\,m\(^{-3}\), and \(C_D=1.3\times10^{-3}\),
and then averaged over the day. The GODAS fields are monthly means, interpolated bilinearly to
the same grid and held constant within each month; to keep the inputs causal,
the value for a given month is used from the first day of the following month
onward. Potential temperature is converted from K to \(^\circ\)C and salinity
from kg\,kg\(^{-1}\) to g\,kg\(^{-1}\); all fields are then standardized with
fitting-window statistics.
\begin{table}[tbp]
\centering
\caption{Input fields of the SST experiment on the \(32\times64\) grid. WB2:
WeatherBench~2 archive of ERA5. Units are those of the fields as used, before
standardization. Only SST is verified.}
\label{tab:sst-fields}
\footnotesize
\setlength{\tabcolsep}{3.5pt}
\begin{tabular}{llll}
\hline
Field & Source (variable) & Level / depth & Unit \\
\hline
SST (target) & WB2 (ERA5 SST) & surface & K \\
Zonal wind stress & WB2 (from 10\,m winds) & surface & N\,m\(^{-2}\) \\
Meridional wind stress & WB2 (from 10\,m winds) & surface & N\,m\(^{-2}\) \\
Sea-surface height & GODAS (\texttt{sshg}) & surface & m \\
Potential temperature & GODAS (\texttt{pottmp}) & 55\,m & \(^\circ\)C \\
Potential temperature & GODAS (\texttt{pottmp}) & 105\,m & \(^\circ\)C \\
Zonal current & GODAS (\texttt{ucur}) & 5\,m & m\,s\(^{-1}\) \\
Meridional current & GODAS (\texttt{vcur}) & 5\,m & m\,s\(^{-1}\) \\
Mixed-layer depth & GODAS (\texttt{dbss\_obml}) & -- & m \\
Salinity & GODAS (\texttt{salt}) & 5\,m & g\,kg\(^{-1}\) \\
Net surface heat flux (downward) & GODAS (\texttt{thflx}) & surface & W\,m\(^{-2}\) \\
\hline
\end{tabular}
\end{table}

SST is centered by a training-only leap-aligned calendar-day climatology,
and each auxiliary field is deseasonalized at every grid point by regressing
on a constant and six annual harmonics (13 regression columns); the
harmonics are not appended to the state. The reduction is channelwise.
Cosine-latitude-weighted empirical orthogonal functions (EOFs) of the
standardized anomalies are computed on
the fitting window only, SST anomalies are represented by 64 EOF coefficients
scaled by \(\sqrt{0.5}\), and each of the 10 auxiliary fields by 4
coefficients scaled by \(\sqrt{0.05}\). The resulting \(64+4\times10=104\)
coordinates are centered and projected by a truncated SVD onto a joint
rank-96 PCA space with coefficient vector \(\mathbf z\in\mathbb R^{96}\).
The reduced matrix is fitted by ridge regression, without intercept and with
equal weights over the 9860 training days, to the blended one-day target
\[
    \mathbf z_{\rm target}=0.6\,\mathbf z_{\rm observed}+0.4\,\mathbf z_{\rm FNO},
\]
where \(\mathbf z_{\rm observed}\) is the observed next-day coefficient vector and
\(\mathbf z_{\rm FNO}\) is obtained by applying the fixed FNO to the standardized
11-channel state and encoding its one-day prediction with the target-day
climatology, the EOFs, and the same joint projection. The ridge parameter is
\(10^{-2}\) times the mean diagonal entry of the Gram matrix
(about \(0.70\)). Two stability adjustments are then applied. First,
every diagonal block of the real Schur form of the fitted matrix whose
spectral radius exceeds 0.9975 is scaled to that radius, keeping the
off-diagonal couplings; here the raw spectral radius is 0.9978 and exactly
one \(1\times1\) block is affected. Second, the launch-time SST residual
\(\xi_0\), i.e., the part of the SST anomaly not reconstructed by the
joint decoding, is propagated as \(c_{\rm res}^{\ell}\xi_0\) at lead \(\ell\) days and
added back to the reconstruction, with a single scalar
\(c_{\rm res}=0.905\) estimated by area-weighted least squares on consecutive
training residuals.

The FNO and SFNO use 11 channels, four layers, width 96, spectral modes
\((12,24)\), and are trained for 180 epochs (no early stopping) with AdamW,
initial learning rate \(5\times10^{-4}\), weight decay \(10^{-5}\), a cosine
schedule, and batch size 12. The loss is the relative \(L^2\) error of the
one-day prediction plus \(0.25\) times that of the two-day prediction
obtained by applying the network twice. With probability \(0.5\) per batch,
four training triples are replaced by their random linear combination with a
common unit-norm Gaussian coefficient vector, applied identically at the three
times. In the 540-day test, finiteness is checked on the full 11-channel
standardized state for FNO and SFNO and on the decoded SST for BSO; SFNO
first becomes nonfinite at day 171, whereas FNO and BSO remain finite through
day 540.

\paragraph{EEG}
The first 60 seconds of 64-channel EEG sampled at 160 Hz are common-average
referenced, causally bandpass filtered to 8--30 Hz, and standardized using
statistics from 1 to 15 seconds, after a one-second filter warm-up. The state
is a \(64\times160\) one-second delay field advanced by \(\tau=0.1\) seconds
(16 samples). Fitting pairs use delay fields ending at times spaced 0.125
seconds (20 samples) apart, from 2 to 14.875 seconds, which gives 104 pairs
with strongly overlapping histories. BSO learns the delay-field map
with a temporal FNO and uses four training-only PCA coordinates, a constant,
and 20 learned functions, producing a \(25\times25\) ridge-regularized
matrix.

\bibliographystyle{siamplain}
\bibliography{references}
\end{document}